\tikzset{cd/.style=matrix of math nodes,row sep=2em,column sep=2em, text height=1.5ex, text depth=0.5ex}
\tikzset{cdar/.style=->,auto}
\tikzset{dar/.style={double,double equal sign distance,-implies}}
\tikzset{mid/.style={anchor=mid}} 
\tikzset{triar/.style={anchor=mid,->}}
\tikzset{tridar/.style={anchor=mid,double,double equal sign distance,-implies}}
\tikzset{narrowfill/.style={inner sep=0pt, fill=white}}
\setlist[enumerate,1]{label=\textup{(\arabic*)}}
\setlist[enumerate,2]{label=\textup{(\alph*)}}
\newcommand*{\MRref}[2]{ \href{http://www.ams.org/mathscinet-getitem?mr=#1}{MR \textbf{#1}}}
\newcommand*{\arxiv}[1]{\href{http://www.arxiv.org/abs/#1}{arXiv: #1}}
\renewcommand{\PrintDOI}[1]{\href{http://dx.doi.org/\detokenize{#1}}{doi: \detokenize{#1}}}
\theoremstyle{plain}
\newtheorem{theorem}[subsection]{Theorem}
\newtheorem{proposition}[subsection]{Proposition}
\newtheorem{corollary}[subsection]{Corollary}
\theoremstyle{definition}
\newtheorem{definition}[subsection]{Definition}
\theoremstyle{remark}
\newtheorem{example}[subsection]{Example}
\newcommand*{\braket}[2]{\langle#1, #2\rangle}
\newcommand*{\congto}{\xrightarrow\sim}
\newcommand*{\Mult}{\mathcal M}
\newcommand*{\UM}{\mathcal U}
\newcommand{\C}{\mathbb{C}}
\newcommand{\N}{\mathbb{N}}
\newcommand{\Z}{\mathbb{Z}}
\newcommand{\T}{\mathbb{T}}
\newcommand*{\Cat}[1][C]{\mathcal #1}
\newcommand*{\Hilm}[1][E]{\mathcal #1}
\newcommand*{\cm}{\mathcal{CM}}
\newcommand*{\CP}{\mathcal{O}}
\newcommand*{\Cont}{\mathrm C}
\newcommand*{\op}{\mathrm{op}}
\newcommand*{\prop}{\mathrm{prop}}
\newcommand*{\Csttwocat}{\mathfrak C^*(2)}
\newcommand*{\Corrcat}{\mathfrak{Corr}}
\newcommand*{\Star}{$^*$\nobreakdash-}
\newcommand*{\nb}{\nobreakdash}
\newcommand*{\Cst}{\mathrm C^*}
\newcommand*{\defeq}{\mathrel{\vcentcolon=}}
\newcommand{\ket}[1]{{\lvert#1\rangle}}
\newcommand{\bra}[1]{{\langle#1\rvert}}
\newcommand{\Comp}{\mathbb K}
\newcommand{\Bound}{\mathbb B}
\newcommand{\Corr}{\mathfrak{Corr}}
\newcommand*{\norm}[1]{\lVert#1\rVert}
\DeclareMathOperator*{\colim}{colim}
\newcommand{\const}{\mathrm{const}}
\newcommand*{\Id}{\mathrm{id}}
\DeclareMathOperator{\Aut}{Aut}
\begin{document}
\title{Colimits in the correspondence bicategory}
\author{Suliman Albandik}
\email{albandik@uni-goettingen.de}
\author{Ralf Meyer}
\email{rmeyer2@uni-goettingen.de}

\address{Mathematisches Institut,
  Georg-August Universit\"at G\"ottingen, Bunsenstra{\ss}e 3--5,
  37073 G\"ottingen, Germany}

\keywords{\(\Cst\)-algebra; crossed product; product system;
  Cuntz--Pimsner algebra; Hilbert module; correspondence; colimit;
  universal property.}

\begin{abstract}
  We interpret several constructions with \(\Cst\)\nb-algebras as
  colimits in the bicategory of correspondences.  This includes
  crossed products for actions of groups and crossed modules,
  Cuntz--Pimsner algebras of proper product systems, direct sums and
  inductive limits, and certain amalgamated free products.
\end{abstract}
\maketitle

\section{Introduction}
\label{sec:intro}

A basic idea of noncommutative geometry is to replace
ordinary quotient spaces by noncommutative generalisations.  For
instance, let a group~\(G\) act on a space~\(X\).  The orbit
space~\(X/G\) is often badly behaved as a topological space.  In
noncommutative geometry, it is replaced by the crossed product
\(\Cst\)\nb-algebra \(\Cont_0(X)\rtimes G\).  We may view the action
of~\(G\) on~\(X\) as a diagram of topological spaces.  The quotient
space is the colimit of this diagram.  We will exhibit the crossed
product for a group action as a colimit as well, in an appropriate
bicategory of \(\Cst\)\nb-algebras.  As this motivating example shows,
our bicategorical colimit construction leads to noncommutative
\(\Cst\)\nb-algebras even when we start with a diagram of locally
compact spaces.

The most concrete description of bicategories involves objects,
arrows, and \(2\)\nb-arrows, the composition of arrows and the
horizontal and vertical composition of \(2\)\nb-arrows.  We shall
emphasise a more conceptual definition: in a bicategory, \emph{sets}
of arrows between objects are replaced by \emph{categories} of arrows,
and the composition becomes a bifunctor.  Associativity and unitality
may hold exactly (strict \(2\)\nb-categories or just
\(2\)\nb-categories) or only up to natural equivalences of categories
that satisfy suitable coherence conditions (weak \(2\)\nb-categories
or bicategories, see \cites{Benabou:Bicategories,
  Leinster:Basic_Bicategories}).  We shall mostly work in the
bicategory~\(\Corrcat\) of \(\Cst\)\nb-algebra correspondences.  This
is introduced by Landsman in~\cite{Landsman:Bicategories} and studied
in some depth in~\cite{Buss-Meyer-Zhu:Higher_twisted}.

For simplicity, we also consider the bicategory~\(\Csttwocat\), which
is introduced in \cite{Buss-Meyer-Zhu:Higher_twisted}*{\S2.1.1}.  Its
objects are \(\Cst\)\nb-algebras, its arrows \(A\to B\) are
nondegenerate \Star{}homomorphisms \(A\to\Mult(B)\),
where~\(\Mult(B)\) denotes the multiplier algebra, and its
\(2\)\nb-arrows \(f_1\Rightarrow f_2\) for nondegenerate
\Star{}homomorphisms \(f_1,f_2\colon A\rightrightarrows \Mult(B)\) are
unitary multipliers \(u\in\UM(B)\) with \(uf_1(a)u^*=f_2(a)\) for all
\(a\in A\).  Since unitaries are invertible, the arrows \(A\to B\) and
\(2\)\nb-arrows between them in~\(\Csttwocat\) form a groupoid, not
just a category.

By the way, we may also restrict to nondegenerate \Star{}homomorphisms
\(A\to B\); this is like restricting to proper correspondences.  Since
a non-unital \(\Cst\)\nb-algebra contains no unitary elements, our
bicategory depends on using unitary \emph{multipliers}.  We need
nondegeneracy for our arrows so that they act on unitary multipliers.

What are diagrams in categories and their colimits?  Let \(\Cat\)
and~\(\Cat[D]\) be categories.  A \emph{diagram} in~\(\Cat[D]\) of
shape~\(\Cat\) is a functor \(\Cat\to\Cat[D]\).  Such diagrams are
again the objects of a category~\(\Cat[D]^{\Cat}\), with natural
transformations between functors as arrows.  Any object~\(x\)
of~\(\Cat[D]\) gives rise to a ``constant'' diagram \(\const_x\colon
\Cat\to\Cat[D]\) of shape~\(\Cat\).  The colimit \(\colim F\) of a
diagram \(F\colon \Cat\to\Cat[D]\) is an object of~\(\Cat[D]\) with
the following universal property: there is a natural bijection between
arrows \(\colim F\to x\) in~\(\Cat[D]\) and natural transformations
\(F\Rightarrow \const_x\) for all objects~\(x\) of~\(\Cat[D]\).  In
brief,
\begin{equation}
  \label{eq:colim_definition}
  \Cat[D](\colim F,x) \cong \Cat[D]^{\Cat}(F,\const_x).
\end{equation}

Now let \(\Cat\) and~\(\Cat[D]\) be bicategories.  As before, a
\emph{diagram} in~\(\Cat[D]\) of shape~\(\Cat\) is a functor (or morphism)
\(\Cat\to\Cat[D]\), as defined in, say, \cites{Benabou:Bicategories,
  Leinster:Basic_Bicategories}.  The functors \(\Cat\to\Cat[D]\) are
the objects of a bicategory~\(\Cat[D]^{\Cat}\); its arrows and
\(2\)\nb-arrows are the \emph{transformations} between functors and
the \emph{modifications} between transformations, see
\cites{Benabou:Bicategories, Leinster:Basic_Bicategories}.  These
definitions are repeated in our main
reference~\cite{Buss-Meyer-Zhu:Higher_twisted} in Definition 4.1 and
in \S4.2 and \S4.3.

Thus \(\Cat[D]^{\Cat}(F_1,F_2)\) for two diagrams \(F_1\) and~\(F_2\)
is now a \emph{category}, not just a set, with transformations
\(F_1\Rightarrow F_2\) as objects and modifications between them as
arrows.  Similarly, for two objects \(x_1\) and~\(x_2\)
of~\(\Cat[D]\), there is a category \(\Cat[D](x_1,x_2)\) of arrows
\(x_1\to x_2\) and \(2\)\nb-arrows between them.  Once again, there is
a constant diagram \(\const_x\) of shape~\(\Cat\) for any object~\(x\)
of~\(\Cat[D]\).  The bicategorical colimit is defined by the same
condition~\eqref{eq:colim_definition}, now interpreting~\(\cong\) as a
natural equivalence of categories.  An object \(\colim F\)
of~\(\Cat[D]\) with this property is unique up to equivalence if it
exists.

What do these definitions mean if \(\Cat=G\) is a group
and~\(\Cat[D]\) is the bicategory \(\Csttwocat\) described above?
First, diagrams in \(\Csttwocat\) are the twisted group actions in the
sense of Busby and Smith; this is observed
in~\cite{Buss-Meyer-Zhu:Higher_twisted}.  Transformations between such
diagrams are also described there.  In particular, a transformation
\(F\Rightarrow \const_D\) is a \emph{covariant representation} of the
twisted \(G\)\nb-action corresponding to~\(F\) in the multiplier
algebra of~\(D\).  A modification is a unitary intertwiner between two
covariant representations.  Hence the colimit and the crossed product
for the twisted action are characterised by the same universal
property, forcing them to be isomorphic.  As a result, if we replace
the category of spaces and maps by the bicategory \(\Csttwocat\), we
are led to enlarge the class of group actions to twisted actions, and
the crossed product construction appears as the natural analogue of a
``quotient'' in our bicategory.

Here we interpret many interesting constructions with
\(\Cst\)\nb-algebras as colimits.  Thus our new point of view unifies
several known constructions with \(\Cst\)\nb-algebras.  Most proofs
are as trivial as above: we merely make the universal property that
defines the bicategorical colimit explicit in a particular case and
recognise the result as the definition of a familiar
\(\Cst\)\nb-algebraic construction.

Instead of~\(\Csttwocat\), we mainly work in the correspondence
bicategory~\(\Corrcat\), which
is defined in \cite{Buss-Meyer-Zhu:Higher_twisted}*{\S2.2}.  Let \(A\)
and~\(B\) be \(\Cst\)\nb-algebras.  A \emph{correspondence} from~\(A\)
to~\(B\) is a Hilbert \(B\)\nb-module~\(\Hilm\) with a nondegenerate
\Star{}homomorphism from~\(A\) to the \(\Cst\)\nb-algebra of
adjointable operators on~\(\Hilm\).  An isomorphism between two such
correspondences is a unitary operator intertwining the left
\(A\)\nb-actions.  We let \(\Corrcat(A,B)\) be the groupoid of
correspondences from~\(A\) to~\(B\) and their isomorphisms.  The
composition is given by the bifunctors
\begin{equation}
  \label{eq:compose_corr}
  \Corrcat(B,C)\times\Corrcat(A,B)\to\Corrcat(A,C),\qquad
  (\Hilm,\Hilm[F])\mapsto \Hilm[F]\otimes_B \Hilm.
\end{equation}
This is associative and monoidal up to canonical isomorphisms, which
are part of the bicategory structure
(see~\cite{Buss-Meyer-Zhu:Higher_twisted}).  A
correspondence~\(\Hilm\) from~\(A\) to~\(B\) is \emph{proper} if the
left \(A\)\nb-module structure is through a map \(A\to\Comp(\Hilm)\)
to the \(\Cst\)\nb-algebra of compact operators.  Thus proper
correspondences with isomorphisms between them form a
subbicategory~\(\Corrcat_\prop\) of~\(\Corrcat\).  Our main results
will only hold for diagrams of proper correspondences, that is,
functors to~\(\Corrcat_\prop\).

Groups are categories with only one object.  At the other extreme are
discrete categories.  These are categories where all arrows are
identities, that is, sets viewed as categories.  Colimits in this case
are also called coproducts.  Whereas coproducts need not exist
in~\(\Csttwocat\), they are given by the \(\Cont_0\)\nb-direct sum in
the correspondence bicategory~\(\Corrcat\); this statement is a
standard additivity result about representations of
\(\Cont_0\)\nb-direct sums on Hilbert modules.  The nonexistence of
coproducts
in~\(\Csttwocat\) is one reason to prefer the correspondence
bicategory~\(\Corrcat\).  Moreover, since \(\Csttwocat\) is a
subbicategory of~\(\Corrcat\), we get more diagrams in~\(\Corrcat\)
than in~\(\Csttwocat\).

A functor \(G\to\Corrcat\) for a group~\(G\) is equivalent to a
saturated Fell bundle over~\(G\) (see
\cite{Buss-Meyer-Zhu:Higher_twisted}).  The colimit for such a functor
is the full \(\Cst\)\nb-algebra of sections of the corresponding Fell
bundle.

Crossed modules are a \(2\)\nb-categorical generalisation of
groups.  Their actions on \(\Cst\)\nb-algebras by automorphisms or
correspondences have been introduced in
\cites{Buss-Meyer-Zhu:Higher_twisted, Buss-Meyer:Crossed_products}.
Once again, the universal property of the colimit is the same as that
for the appropriate analogue of the crossed product in this context.

What happens for non-reversible dynamical systems?  Let~\(P\) be a
monoid, that is, a category with a single object.  A functor \(P\to
\Corrcat\) is the same as an essential product system over the
opposite monoid~\(P^\op\).  The change of direction comes
from~\eqref{eq:compose_corr}, where we tensor in reverse order to
conform to the usual conventions of composing maps.  Colimits for
product systems are remarkable because the universal property we get
is not always but often equivalent to a standard one.  More precisely,
if the product system is \emph{proper}, that is, all left actions in
the product system are through compact operators, then the colimit of
the corresponding diagram exists and is isomorphic to the
Cuntz--Pimsner algebra of the product system.  We get the ``absolute''
Cuntz--Pimsner algebra, not the popular modification by Katsura, and
we get there directly and never see the Cuntz--Toeplitz algebra along
the way.  This result on Cuntz--Pimsner algebras is the main idea
of~\cite{Albandik-Meyer:Product}.  We had originally
planned~\cite{Albandik-Meyer:Product} as an applications section
inside this article.  We were, however, convinced by
\(\Cst\)\nb-algebra colleagues to write down those results separately,
to make them accessible without category theory background.

Readers familiar with free products of \(\Cst\)\nb-algebras may have
been surprised that the
bicategory \(\Csttwocat\) is not closed under coproducts: already in
the usual category of \(\Cst\)\nb-algebras with \Star{}homomorphisms,
there is a coproduct, namely, the \emph{free} product.  This does not
cooperate with unitary multipliers, however, and fails to satisfy the
universal property for a coproduct in \(\Csttwocat\) or \(\Corrcat\).
This situation clears up when we consider pushouts.  Given two
\emph{nondegenerate} \Star{}homomorphisms \(B_1\leftarrow A\rightarrow
B_2\), their colimit in \(\Corrcat\) or \(\Csttwocat\) is the
amalgamated free product \(B_1 \star_A B_2\).  Free products without
amalgamation occur in the highly degenerate case \(A=0\).

Even more fundamental than pushouts are coequalisers.  These are
colimits of diagrams of the shape \(\Hilm_1,\Hilm_2\colon
A\rightrightarrows B\).  For instance, if \(A=B=\C\) and
\(\Hilm_i=\C^{n_i}\) for \(i=1,2\), then the coequaliser is the
universal \(\Cst\)\nb-algebra generated by elements~\(u_{jk}\) for
\(1\le j\le n_1\), \(1\le k\le n_2\), subject to the relations
\[
\sum_j u_{ij} u_{kj}^*=\delta_{i,k},\qquad
\sum_i u_{ij}^* u_{ik}=\delta_{j,k}
\]
for all \(1\le i,k\le n_1\) or all \(1\le j,k\le n_2\), respectively.
If \(n_1=n_2\), then this is the
\(\Cst\)\nb-algebra~\(U_n^\mathrm{nc}\) introduced by Brown and
studied further by McClanahan \cites{Brown:Ext_free,
  McClanahan:unitary_matrix, McClanahan:KK_twisted}.  This example
shows that coequalisers, even of very small diagrams, need not be
particularly well-behaved \(\Cst\)\nb-algebras.

Another situation we treat are inductive limits: the inductive limit
of a chain of \Star{}homomorphisms is also a colimit in~\(\Corrcat\),
even if some of these \Star{}homomorphisms are degenerate.

We also prove one general result here: any diagram of proper
correspondences, indexed by any bicategory, has a colimit.  We
describe this colimit by generators and relations, with the known
construction of Cuntz--Pimsner algebras of product systems as a model
case.  This model case also shows that something may go wrong for
diagrams involving non-proper correspondences.

\section{Colimits in bicategories}
\label{sec:colimits}

Let \(\Cat\) and~\(\Cat[D]\) be bicategories.  An object
of~\(\Cat[D]^{\Cat}\) is a functor (or morphism) \(\Cat\to\Cat[D]\);
it consists of
several objects, arrows and \(2\)\nb-arrows in~\(\Cat[D]\).  In the
\emph{constant diagram}, \(\const_x\colon \Cat\to\Cat[D]\), all these
objects are the same object~\(x\) of~\(\Cat[D]\), all the arrows are
the identity on~\(x\), and all \(2\)\nb-arrows are the identity
\(2\)\nb-arrow on~\(\Id_x\).

For instance, functors \(G\to\Csttwocat\) for a group~\(G\) are
identified with Busby--Smith twisted actions of~\(G\) on
\(\Cst\)\nb-algebras in
\cite{Buss-Meyer-Zhu:Higher_twisted}*{\S3.1.1}.  The constant diagram
\(\const_A\) for a \(\Cst\)\nb-algebra~\(A\) is the trivial
\(G\)\nb-action on~\(A\), with trivial twists.  Functors \(G
\to\Corrcat\) are identified with saturated Fell bundles in
\cite{Buss-Meyer-Zhu:Higher_twisted}*{\S3.1.1}.  A constant diagram
\(\const_A\) in~\(\Corrcat\) corresponds to the constant Fell bundle
with all fibres equal to~\(A\) and the constant multiplication and
involution.

\begin{definition}
  \label{def:colimit}
  Let \(\Cat\) and~\(\Cat[D]\) be bicategories and let \(F\colon
  \Cat\to\Cat[D]\) be a functor.  A \emph{cone} over~\(F\) is an
  object~\(x\) of~\(\Cat[D]\) with a transformation
  \(\vartheta_x\colon F\to \const_x\); a \emph{colimit} of~\(F\) is a
  universal cone over~\(F\), that is, an object~\(x\) of~\(\Cat[D]\)
  with a transformation \(\vartheta_x\colon F\to \const_x\), such that
  composition with~\(\vartheta_x\) induces equivalences of categories
  \[
  \Cat[D](x,y) \xrightarrow{\cong} \Cat[D]^{\Cat}(F,\const_y)
  \qquad\text{for all objects~}y\text{ of~}\Cat[D].
  \]
\end{definition}

If we are given natural equivalences \(\Cat[D](x,y) \cong
\Cat[D]^{\Cat}(F,\const_y)\), then the identity map in
\(\Cat[D](x,x)\) gives a transformation \(\vartheta_x\colon F\to
\const_x\), which is determined uniquely up to isomorphism; naturality
forces the equivalences \(\Cat[D](x,y)\to\Cat[D]^{\Cat}(F,\const_y)\)
to be composition with~\(\vartheta_x\).  Hence a colimit may also be
defined as an object~\(x\) of~\(\Cat[D]\) with natural equivalences of
categories \(\Cat[D](x,y)\cong \Cat[D]^{\Cat}(F,\const_y)\).

\begin{proposition}
  \label{pro:colim_functorial}
  The colimit is functorial: a transformation \(\Phi\colon F_1\to
  F_2\) induces an arrow \(\colim\Phi\colon \colim F_1\to\colim F_2\),
  and a modification \(\Phi_1\to\Phi_2\) induces a
  \(2\)\nb-arrow \(\colim\Phi_1\to \colim\Phi_2\), and these
  constructions are compatible with the composition bifunctor for
  transformations.
\end{proposition}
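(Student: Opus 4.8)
The plan is to apply the universal property of Definition~\ref{def:colimit} repeatedly, using essential surjectivity of the universal equivalences to manufacture arrows and \(2\)\nb-arrows, and full faithfulness to pin them down. Write \(\vartheta_i\colon F_i\to\const_{\colim F_i}\) for the universal cone of \(\colim F_i\), \(i=1,2\), so that the universal property supplies an equivalence of categories
\[
  \Cat[D](\colim F_1,\colim F_2)\xrightarrow{\cong}\Cat[D]^{\Cat}(F_1,\const_{\colim F_2}),\qquad
  f\mapsto\const_f\circ\vartheta_1 .
\]
First I would construct \(\colim\Phi\). Given a transformation \(\Phi\colon F_1\to F_2\), vertical composition with~\(\vartheta_2\) yields an object \(\vartheta_2\circ\Phi\) of \(\Cat[D]^{\Cat}(F_1,\const_{\colim F_2})\). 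Since the displayed functor is essentially surjective, there is an arrow \(\colim\Phi\colon\colim F_1\to\colim F_2\), unique up to isomorphism, together with an isomorphism of transformations \(\const_{\colim\Phi}\circ\vartheta_1\cong\vartheta_2\circ\Phi\); this isomorphism is the defining property of~\(\colim\Phi\).

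Next I would treat modifications. A modification \(m\colon\Phi_1\to\Phi_2\) between transformations \(\Phi_1,\Phi_2\colon F_1\to F_2\) is whiskered by~\(\vartheta_2\) to a modification \(\vartheta_2\circ\Phi_1\to\vartheta_2\circ\Phi_2\), that is, an arrow in \(\Cat[D]^{\Cat}(F_1,\const_{\colim F_2})\). Because the equivalence above is fully faithful, this arrow corresponds to a unique \(2\)\nb-arrow between the preimages \(\colim\Phi_1\) and \(\colim\Phi_2\), which I take as the definition of \(\colim m\). That \(m\mapsto\colim m\) respects identities and vertical composition of modifications is immediate, since it is a composite of the functor just inverted with the functorial whiskering by~\(\vartheta_2\).

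Finally I would establish compatibility with composition. For \(\Phi\colon F_1\to F_2\) and \(\Psi\colon F_2\to F_3\), I compute the image of \(\colim\Psi\circ\colim\Phi\) under the universal equivalence for \(\colim F_1\), using that \(\const\) is a functor, \(\const_{\colim\Psi\circ\colim\Phi}\cong\const_{\colim\Psi}\circ\const_{\colim\Phi}\), together with the defining isomorphisms from the first paragraph:
\[
  \const_{\colim\Psi}\circ\const_{\colim\Phi}\circ\vartheta_1
  \cong\const_{\colim\Psi}\circ\vartheta_2\circ\Phi
  \cong\vartheta_3\circ\Psi\circ\Phi .
\]
As \(\colim(\Psi\circ\Phi)\) is by definition sent to \(\vartheta_3\circ(\Psi\circ\Phi)\), full faithfulness of the equivalence forces a canonical isomorphism \(\colim(\Psi\circ\Phi)\cong\colim\Psi\circ\colim\Phi\); the corresponding statement for modifications follows in the same way after whiskering. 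Implicit here is the naturality of the universal equivalences in the target object, which holds because the comparison functor is given uniformly by \(f\mapsto\const_f\circ\vartheta_i\).

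The main obstacle is not the logic but the bookkeeping of coherence. Every displayed equality holds only up to the associators and unitors of~\(\Cat[D]\) and the coherence data built into the notion of a transformation, so \(\colim\Phi\) depends on a choice and all asserted compatibilities are isomorphisms rather than equalities. Verifying that the chosen \(2\)\nb-isomorphisms satisfy the relevant coherence conditions is routine but tedious; conceptually nothing beyond essential surjectivity, full faithfulness, and naturality of the universal equivalences is required.
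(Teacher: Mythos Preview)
Your proof is correct and follows essentially the same route as the paper: form \(\vartheta_2\circ\Phi\), use the universal equivalence \(\Cat[D](\colim F_1,\colim F_2)\simeq\Cat[D]^{\Cat}(F_1,\const_{\colim F_2})\) to extract \(\colim\Phi\) (unique up to isomorphism), do the analogous thing for modifications via whiskering, and declare the remaining coherence checks routine. You are in fact slightly more explicit than the paper about separating the roles of essential surjectivity and full faithfulness and about the compositionality argument, but the underlying strategy is identical.
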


\begin{proof}
  Let \((x_1,\vartheta_1)\) and \((x_2,\vartheta_2)\) be colimits of
  \(F_1\) and~\(F_2\), respectively.  Transformations may be
  composed, so \(\vartheta_2\circ\Phi\) is an object of
  \(\Cat[D]^{\Cat}(F,\const_{x_2})\).  By the definition of the
  colimit, there is an arrow \(\colim \Phi\colon x_1\to x_2\) with
  \(\vartheta_2\circ\Phi \cong (\colim \Phi)\circ\vartheta_1\), and this
  arrow is unique up to equivalence.  Similarly, a modification
  \(\Phi_1\to\Phi_2\) induces a modification
  \(\vartheta_2\circ\Phi_1\to \vartheta_2\circ\Phi_2\),
  which gives a \(2\)\nb-arrow \(\colim \Phi_1\to \colim
  \Phi_2\).  Thus we get a functor \(\Cat[D]^{\Cat}(F_1,F_2)\to
  \Cat[D](\colim F_1,\colim F_2)\).  It is routine to check that
  this functor, up to equivalence, does not depend on choices and
  that the construction is compatible with the composition
  bifunctors in \(\Cat[D]^{\Cat}\) and~\(\Cat[D]\).
\end{proof}

\begin{corollary}
  \label{cor:colim_unique}
  Any two colimits of the same diagram are canonically equivalent.\qed
\end{corollary}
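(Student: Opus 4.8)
The plan is to run the standard uniqueness argument for universal objects, now in the bicategorical setting, using the functoriality established in Proposition~\ref{pro:colim_functorial}. Let \((x_1,\vartheta_1)\) and \((x_2,\vartheta_2)\) be two colimits of the same diagram \(F\colon\Cat\to\Cat[D]\). The goal is to produce arrows \(f\colon x_1\to x_2\) and \(g\colon x_2\to x_1\) together with \(2\)\nb-isomorphisms \(g\circ f\cong\Id_{x_1}\) and \(f\circ g\cong\Id_{x_2}\), exhibiting \(x_1\) and~\(x_2\) as equivalent.

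First I would apply Proposition~\ref{pro:colim_functorial} to the identity transformation \(\Id_F\colon F\to F\), but computing the source colimit as~\(x_1\) and the target colimit as~\(x_2\). This yields a comparison arrow \(f\defeq\colim\Id_F\colon x_1\to x_2\) with \(\vartheta_2\cong f\circ\vartheta_1\); reversing the roles of \(x_1\) and~\(x_2\) gives \(g\colon x_2\to x_1\) with \(\vartheta_1\cong g\circ\vartheta_2\).

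Next, I would invoke the compatibility with the composition bifunctor from Proposition~\ref{pro:colim_functorial}. Since \(\Id_F\circ\Id_F=\Id_F\), the composite \(g\circ f\) is, up to \(2\)\nb-isomorphism, the comparison arrow induced by~\(\Id_F\) with both source and target colimits equal to~\(x_1\). That arrow is \(2\)\nb-isomorphic to~\(\Id_{x_1}\), because the relation \(\Id_{x_1}\circ\vartheta_1\cong\vartheta_1\) already realises \(\Id_{x_1}\) as a valid choice of \(\colim\Id_F\) in this case, and such a choice is unique up to isomorphism by the universal property of~\(x_1\). Thus \(g\circ f\cong\Id_{x_1}\), and symmetrically \(f\circ g\cong\Id_{x_2}\).

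The main point to handle with care is the uniqueness clause that upgrades the mutually-inverse-up-to-isomorphism arrows to a genuine equivalence. Concretely, each colimit equivalence \(\Cat[D](x_i,y)\congto\Cat[D]^{\Cat}(F,\const_y)\) from Definition~\ref{def:colimit} is in particular fully faithful, hence reflects isomorphisms; this is exactly what forces any two candidate lifts of a given transformation to agree up to a unique \(2\)\nb-isomorphism, and therefore what makes the comparison arrows canonical rather than merely existent. The coherence conditions relating these \(2\)\nb-isomorphisms hold automatically, since they are transported along the equivalences from the evident identities in \(\Cat[D]^{\Cat}(F,\const_{x_i})\).
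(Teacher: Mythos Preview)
Your argument is correct and is exactly the standard unpacking of why the corollary follows from Proposition~\ref{pro:colim_functorial}; the paper itself gives no proof at all (the \qed\ marks it as immediate), so you have simply written out what the authors leave implicit.
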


Equivalences in~\(\Csttwocat\) are \Star{}isomorphisms, those
in~\(\Corrcat\) are imprimitivity bimodules.  Hence colimits
in~\(\Csttwocat\) are unique up to isomorphism if they exist, whereas
colimits in~\(\Corrcat\) are only unique up to Morita--Rieffel
equivalence.

\section{Coproducts and products}
\label{sec:coproduct_product}

Coproducts are colimits of diagrams indexed by a category with only
identity morphisms.  Such a diagram is simply a map from some index
set~\(I\) to the objects of the category.  The following proposition
shows that the usual \(\Cont_0\)\nb-direct sum of \(\Cst\)\nb-algebras
is both a coproduct and a product of the set of objects \((A_i)_{i\in
  I}\) in \(\Corrcat\).  (We do not consider limits in this article
because it seems rare that they exist in~\(\Corrcat\).  We only
mention the result on products because its proof and statement are so
similar to the description of coproducts.)

\begin{proposition}
  \label{pro:product_coproduct_in_Corr}
  Let \(A_i\) for \(i\in I\) and~\(B\) be \(\Cst\)\nb-algebras.  Then
  \begin{align*}
    \Corrcat\left(\bigoplus_{i\in I} A_i,B\right) &\cong
    \prod_{i\in I} \Corrcat(A_i,B),\\
    \Corrcat\left(B,\bigoplus_{i\in I} A_i\right) &\cong
    \prod_{i\in I} \Corrcat(B,A_i).
  \end{align*}
\end{proposition}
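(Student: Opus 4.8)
The plan is to prove both equivalences by the same mechanism. Write $A \defeq \bigoplus_{i\in I} A_i$ for the $\Cont_0$-direct sum. Its multiplier algebra is the product $\Mult(A) = \prod_i \Mult(A_i)$, and in particular contains a family of mutually orthogonal central projections~$p_i$, namely the units of the factors, with $\sum_i p_i = 1$ in the strict topology. These projections cut any correspondence into pieces indexed by~$I$, and conversely a family of pieces is reassembled by a direct sum of Hilbert modules. I would set up explicit functors in both directions and check that they are mutually inverse up to natural isomorphism, including on the isomorphisms of correspondences, which are the $2$-arrows of the groupoids involved.

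For the first equivalence, let $\Hilm$ be a correspondence from~$A$ to~$B$ with left action $\varphi\colon A \to \Bound(\Hilm)$. Nondegeneracy lets me extend~$\varphi$ to~$\Mult(A)$, so the~$p_i$ yield orthogonal projections $q_i \defeq \bar\varphi(p_i) \in \Bound(\Hilm)$ with $\sum_i q_i = 1$ strictly. Setting $\Hilm_i \defeq q_i \Hilm = \overline{\varphi(A_i)\Hilm}$ produces a correspondence from~$A_i$ to~$B$, hence an object of $\prod_i \Corrcat(A_i,B)$. The inverse functor sends a family $(\Hilm_i)_i$ to the Hilbert-module direct sum $\bigoplus_i \Hilm_i$ with the diagonal left action of~$A$; the estimate $\norm{\varphi(a)} \le \sup_i \norm{a_i} = \norm{a}$ gives adjointability, and approximation by finite truncations gives nondegeneracy of the left action. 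Since a unitary intertwining two left $A$-actions commutes with each~$q_i$, it splits as a family of unitaries $\Hilm_i \to \Hilm_i'$, and conversely; this handles the $2$-arrows and shows the two functors are mutually inverse.

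For the second equivalence I use the same projections, now acting on the right. On a Hilbert $A$-module~$\Hilm[F]$ the right $A$-action extends to a right $\Mult(A)$-action, and because each~$p_i$ is central and self-adjoint, right multiplication by~$p_i$ is a self-adjoint projection $P_i \in \Bound(\Hilm[F])$ with $\sum_i P_i = 1$ strictly. Writing $\Hilm[F]_i \defeq \Hilm[F] p_i$ gives a Hilbert $A_i$-module; the decomposition $\langle \xi,\xi\rangle = \sum_i p_i\langle\xi,\xi\rangle$, convergent in~$A$, shows that~$\Hilm[F]$ is exactly the $\Cont_0$-direct sum $\bigoplus_i \Hilm[F]_i$. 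Every adjointable operator is $A$-linear, hence commutes with right multiplication by the central~$p_i$; thus the left $B$-action preserves each~$\Hilm[F]_i$, making it a correspondence from~$B$ to~$A_i$. The inverse functor reassembles a family $(\Hilm[F]_i)_i$ into the $\Cont_0$-direct sum with the diagonal left $B$-action, and the $2$-arrows split as before.

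The main obstacle is the bookkeeping forced by the $\Cont_0$- rather than $\ell^\infty$-nature of the direct sum when~$I$ is infinite. I must check that reassembling the pieces yields precisely the $\Cont_0$-direct sum, so that inner products converge to~$0$ using $\langle\xi,\xi\rangle \in \bigoplus_i A_i$; that the diagonal left action stays nondegenerate; and that $\sum_i p_i = 1$ holds strictly so that $\sum_i q_i$ and $\sum_i P_i$ recover the identity. Once these convergence points are settled, the identification of the two functors as mutual inverses, and the matching of isomorphisms of correspondences, is routine.
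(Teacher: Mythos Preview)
Your proposal is correct and follows essentially the same approach as the paper's proof: both arguments split a correspondence using the orthogonal projections coming from the multiplier algebra \(\Mult\bigl(\bigoplus_i A_i\bigr)=\prod_i \Mult(A_i)\), reassemble families via the Hilbert-module direct sum, and handle \(2\)-arrows by observing that intertwiners commute with these projections. The only cosmetic difference is that for the second equivalence the paper writes \(\Hilm[F]_i=\Hilm[F]\cdot A_i\) directly in terms of the ideals, whereas you phrase it via the extended right \(\Mult(A)\)-action and the central projections~\(p_i\); these are the same subspaces.
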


\begin{proof}
  Given correspondences \(\Hilm_i\colon A_i\to B\), we may form the
  Hilbert \(B\)\nb-module \(\bigoplus_{i\in I} \Hilm_i\) and equip it
  with a nondegenerate left action of \(\bigoplus_{i\in I} A_i\) to
  get a correspondence from \(\bigoplus_{i\in I} A_i\) to~\(B\).
  Isomorphisms of correspondences \(\Hilm_i\to \Hilm_i'\) may be put
  together to an isomorphism of correspondences \(\bigoplus_{i\in I}
  \Hilm_i\to \bigoplus_{i\in I} \Hilm_i'\).  Thus we get a functor
  \begin{equation}
    \label{eq:coproduct_functor}
    \prod_{i\in I} \Corrcat(A_i,B) \to
    \Corrcat\left(\bigoplus_{i\in I} A_i,B\right).
  \end{equation}

  To show that~\eqref{eq:coproduct_functor} is an equivalence,
  consider a correspondence~\(\Hilm\) from \(\bigoplus_{i\in I}
  A_i\) to~\(B\).  Since the left action is nondegenerate, it
  extends to an action of the multiplier algebra
  of~\(\bigoplus_{i\in I} A_i\).  The latter is \(\prod_{i\in I}
  \Mult(A_i)\).  (The product is taken in the category of
  \(\Cst\)\nb-algebras, so it contains only bounded families.)  In
  particular, \(\Mult\bigl(\bigoplus_{i\in I} A_i\bigr)\) contains
  an orthogonal projection~\(p_i\) onto the \(i\)th summand for each
  \(i\in I\).  We have strict convergence \(\sum_{i\in I} p_i=1\).
  The projections~\(p_i\) act by orthogonal projections
  on~\(\Hilm\).  Let \(\Hilm_i\defeq p_i\Hilm\) be their images;
  these are Hilbert submodules on which~\(A_i\) acts
  nondegenerately, respectively.  Thus~\(\Hilm_i\) is a
  correspondence from~\(A_i\) to~\(B\).  Since \(\sum_{i\in I}
  p_i=1\), we have \(\bigoplus_{i\in I} \Hilm_i=\Hilm\).
  Thus~\(\Hilm\) belongs to the essential range of the
  functor~\eqref{eq:coproduct_functor}.  Furthermore, since any
  intertwining operator between two correspondences commutes with
  the left action of the multiplier algebra and hence with the
  projections~\(p_i\), it comes from a family of intertwining
  operators on the summands~\(\Hilm_i\); this shows that the
  functor~\eqref{eq:coproduct_functor} is fully faithful.
  Hence~\eqref{eq:coproduct_functor} is an equivalence of groupoids.
  This yields the first isomorphism, showing that \(\bigoplus_{i\in
    I} A_i\) is a coproduct of \((A_i)_{i\in I}\) in~\(\Corrcat\).

  Now consider a family of correspondences \(\Hilm_i\) from~\(B\)
  to~\(A_i\).  Let \(\bigoplus_{i\in I} \Hilm_i\) be the set of all
  families \((\xi_i)_{i\in I}\) with \(\xi_i\in\Hilm_i\) and
  \((i\mapsto \norm{\xi_i})\in\Cont_0(I)\).  This is a Hilbert module
  over \(\bigoplus_{i\in I} A_i\) by the pointwise operations.  The
  left actions of~\(B\) on the Hilbert modules~\(\Hilm_i\) give a
  nondegenerate left action of~\(B\) on \(\bigoplus_{i\in I}
  \Hilm_i\).  Thus we get a correspondence from~\(B\) to
  \(\bigoplus_{i\in I} A_i\).  This construction is natural
  with respect to isomorphisms of correspondences and hence gives a
  functor
  \begin{equation}
    \label{eq:product_functor}
    \prod \Corrcat(B,A_i) \to
    \Corrcat(B,\bigoplus A_i).
  \end{equation}
  Take a correspondence~\(\Hilm\) from~\(B\) to \(\bigoplus_{i\in I}
  A_i\).  For each \(i\in I\), \(\Hilm_i\defeq\Hilm\cdot
  A_i\subseteq\Hilm\) is a correspondence from~\(B\) to the
  ideal~\(A_i\) in~\(\bigoplus_{j\in I} A_j\).  Since these ideals
  are orthogonal, we have \(\Hilm\cong\bigoplus_{i\in I}\Hilm_i\).
  Thus~\(\Hilm\) belongs to the essential range
  of~\eqref{eq:product_functor}.  Since the decomposition
  \(\Hilm\cong\bigoplus_{i\in I}\Hilm_i\) is natural, the
  functor~\eqref{eq:product_functor} is fully faithful.
\end{proof}

Proposition~\ref{pro:product_coproduct_in_Corr} works because we may
take direct sums of correspondences to make things orthogonal.  In the
category of \(\Cst\)\nb-algebras with \Star{}homomorphisms as
morphisms, coproducts are free products, which are highly
noncommutative.  Since the coproduct in \(\Corrcat\) is unique up to
isomorphism in~\(\Corrcat\), that is, Morita--Rieffel equivalence, the
free product is \emph{not} a coproduct in~\(\Corrcat\) any more.  The
reason is that it is not compatible with isomorphisms of
correspondences: for a coproduct, we allow different unitaries
\(\Hilm_i\cong\Hilm'_i\) for all \(i\in I\).  Orthogonality of
the~\(\Hilm_i\) allows us to put two unrelated unitaries together.  In
the \(2\)\nb-category~\(\Csttwocat\), coproducts do not exist in
general for this reason: there are no orthogonal direct sums
in~\(\Csttwocat\), and free products do not behave well with respect
to \(2\)\nb-arrows.

\begin{example}
  \label{exa:coproduct_CC}
  We prove formally that the coproduct of two copies of~\(\C\)
  in~\(\Csttwocat\) does not exist.  Let~\(B\) be a
  \(\Cst\)\nb-algebra.  There is a unique arrow \(\C\to B\), namely,
  the unit map of~\(\Mult(B)\).  Thus there is a unique transformation
  from our coproduct diagram to~\(\const_B\), given by the unit map on
  both copies of~\(\C\).  A modification on this unique transformation
  is given by two unitaries \(u_1,u_2\in \Mult(B)\), one for each copy
  of~\(\C\), subject to no conditions.  If we also take \(B=\C\), then
  our groupoid of transformations is the two-torus group~\(\T^2\).

  Now assume that the \(\Cst\)\nb-algebra~\(A\) were a coproduct of
  \(\C\) and~\(\C\) in~\(\Csttwocat\).  Then the groupoid of arrows
  \(A\to\C\) would be equivalent to~\(\T^2\).  Its objects are
  non-zero characters \(A\to\C\) and its arrows are unitaries
  in~\(\C\) acting on characters by conjugation, that is, trivially.
  So we get a disjoint union of some copies of the group~\(\T\), one
  for each character of~\(A\).  But this is never equivalent
  to~\(\T^2\) because the groups \(\T\) and~\(\T^2\) are not
  isomorphic.  To see the latter, observe that~\(\T\) has exactly one
  element of order~\(2\), namely, \(-1\), while~\(\T^2\) has exactly
  three of them, namely, \((-1,+1)\), \((-1,-1)\), \((+1,-1)\).
\end{example}

The category~\(\Corrcat\) has more diagrams than~\(\Csttwocat\).
Proposition~\ref{pro:product_coproduct_in_Corr} and
Example~\ref{exa:coproduct_CC} show that some very simple diagrams
have a colimit in~\(\Corrcat\), but not in~\(\Csttwocat\).  In the
following, we therefore mostly study colimits in~\(\Corrcat\).

Next we clarify the role of free products in our theory.  We show that
amalgamated free products are pushouts in~\(\Corrcat\) under
a nondegeneracy assumption; this rules out, in particular, free
products without any amalgamation.  Indeed, in the most degenerate
case where we amalgamate over~\(0\),
Proposition~\ref{pro:product_coproduct_in_Corr} shows that the
coproduct is the \(\Cont_0\)\nb-direct sum and not the free product.

\subsection{Pushouts}
\label{sec:pushouts}

A \emph{pushout} in~\(\Corrcat_\prop\) is a colimit of a diagram of
the form
\[
\begin{tikzpicture}
  \node (A) at (0,0) {\(A\)};
  \node (B2) at (0,-1) {\(B_2,\)};
  \node (B1) at (1,0) {\(B_1\)};
  \draw[->] (A) to
  node[above] {\(\Hilm_1\)}  (B1);
  \draw[->] (A) to node[left] {\(\Hilm_2\)} (B2);
\end{tikzpicture}
\]
where \(A\), \(B_1\) and \(B_2\) are \(\Cst\)\nb-algebras and
\(\Hilm_1\) and~\(\Hilm_2\) are proper correspondences, without
further data or conditions.

One extreme case is \(A=0\), where the pushout degenerates to a
coproduct; this gives the direct sum \(B_1\oplus B_2\) by
Proposition~\ref{pro:product_coproduct_in_Corr}.  Here we consider
the opposite extreme case, where \(\Hilm_1\) and~\(\Hilm_2\) are
associated to \emph{nondegenerate} \Star{}homomorphisms \(A\to
B_1\), \(A\to B_2\); that is, \(\Hilm_i=B_i\) with~\(A\) acting by
\(a\cdot b\defeq \varphi_i(a)\cdot b\) for \(i=1,2\).

\begin{proposition}
  \label{pro:amalgamated_free_product}
  Let \(A\), \(B_1\) and \(B_2\) be \(\Cst\)\nb-algebras and let
  \(\varphi_1\colon A\to B_1\) and \(\varphi_2\colon A\to B_2\) be
  \emph{nondegenerate} \Star{}homomorphisms.  The amalgamated free
  product \(B_1\star_A B_2\) is also a pushout in~\(\Corrcat\).
\end{proposition}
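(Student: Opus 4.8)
The plan is to verify the universal property of Definition~\ref{def:colimit} directly, by exhibiting a universal cone and recognising the comparison functor as a well-understood equivalence of groupoids. Write \(B\defeq B_1\star_A B_2\) and let \(\iota_i\colon B_i\to B\) be the canonical \Star{}homomorphisms, so that \(\iota_1\varphi_1=\iota_2\varphi_2\). The shape bicategory~\(\Cat\) is the one-categorical span \(\bullet\leftarrow\bullet\to\bullet\); under \(F\) its three objects go to \(A,B_1,B_2\) and its two generating arrows to the proper correspondences \(\Hilm_1=B_1\) and \(\Hilm_2=B_2\). The universal cone \(\vartheta\colon F\to\const_B\) is given by the correspondences \(\iota_i\colon B_i\to B\) (that is, \(B\) with \(B_i\) acting through \(\iota_i\)) together with the coherence isomorphisms supplied by \(\iota_1\varphi_1=\iota_2\varphi_2\). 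Composition with~\(\vartheta\) sends a correspondence \(\Hilm\colon B\to D\) to the transformation whose \(B_i\)\nb-component is the restriction \(\iota_i^*\Hilm\); here the nondegeneracy of~\(\varphi_i\), which forces~\(\iota_i\) to be nondegenerate, is exactly what guarantees that \(\iota_i^*\Hilm\) is again a nondegenerate correspondence, so that the functor
\[
\Corrcat(B,D)\to\Corrcat^{\Cat}(F,\const_D)
\]
is well defined. I must show it is an equivalence of groupoids for every~\(D\).

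First I would unwind the target groupoid. A transformation \(F\Rightarrow\const_D\) is equivalently described by correspondences \(\psi_i\colon B_i\to D\) for \(i=1,2\) together with an isomorphism \(u\colon \varphi_1^*\psi_1\xrightarrow{\cong}\varphi_2^*\psi_2\) of correspondences \(A\to D\); the \(A\)\nb-component is redundant, being determined up to canonical isomorphism by either side. The crucial observation is that, by the definition of isomorphism in~\(\Corrcat\), such a~\(u\) is a \emph{unitary} of the underlying Hilbert \(D\)\nb-modules intertwining the two left \(A\)\nb-actions. Hence~\(u\) identifies the modules of \(\psi_1\) and \(\psi_2\), and after transporting along~\(u\) we may assume both live on a single Hilbert \(D\)\nb-module~\(\Hilm\) carrying nondegenerate left actions \(\lambda_i\colon B_i\to\Bound(\Hilm)\) that agree on~\(A\), meaning \(\lambda_1\circ\varphi_1=\lambda_2\circ\varphi_2\).

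For essential surjectivity I would feed this data into the universal property of the full amalgamated free product. Nondegenerate left actions of~\(B_i\) on~\(\Hilm\) are the same as nondegenerate \Star{}homomorphisms \(B_i\to\Bound(\Hilm)=\Mult(\Comp(\Hilm))\), and \(\lambda_1\circ\varphi_1=\lambda_2\circ\varphi_2\) is precisely compatibility over~\(A\). The universal property of \(B_1\star_A B_2\) therefore yields a \Star{}homomorphism \(\lambda\colon B\to\Bound(\Hilm)\) with \(\lambda\iota_i=\lambda_i\); since each~\(\lambda_i\) is nondegenerate and~\(B\) is generated by the images of the~\(\iota_i\), the homomorphism~\(\lambda\) is nondegenerate as well. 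This turns~\(\Hilm\) into a correspondence \(B\to D\) whose restrictions along~\(\iota_i\) recover \(\psi_i\) compatibly with~\(u\), so the comparison functor hits every object. For full faithfulness I would describe morphisms on both sides: a morphism in \(\Corrcat(B,D)\) is a unitary \(w\colon\Hilm\to\Hilm'\) intertwining the left \(B\)\nb-action, while a modification between the corresponding transformations unwinds, after the identifications by \(u\) and~\(u'\), to a single unitary intertwining both the \(B_1\)\nb- and the \(B_2\)\nb-actions. As \(B\) is generated by \(\iota_1(B_1)\) and \(\iota_2(B_2)\), a unitary intertwines the \(B\)\nb-action if and only if it intertwines both summand actions, so the two notions of morphism coincide.

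I expect the main technical obstacle to be the careful handling of nondegeneracy, which is where the hypothesis on \(\varphi_1,\varphi_2\) is used. One must verify that nondegeneracy of \(\varphi_1\) and \(\varphi_2\) forces the canonical maps \(\iota_i\colon B_i\to B\) to be nondegenerate — so that restriction along~\(\iota_i\) sends correspondences to correspondences and the comparison functor exists at all — and, in the converse direction, that the assembled action~\(\lambda\) is nondegenerate because each~\(\lambda_i\) is and the \(\iota_i(B_i)\) generate~\(B\). Once nondegeneracy is under control, the remaining content is exactly the defining universal property of the full amalgamated free product for \Star{}homomorphisms into the \(\Cst\)\nb-algebra \(\Bound(\Hilm)\), and the two groupoids are identified, so \(B_1\star_A B_2\) is the asserted pushout.
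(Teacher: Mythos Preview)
Your proposal is correct and follows essentially the same approach as the paper: unwind a transformation to~\(\const_D\) as a pair of correspondences \(B_i\to D\) linked by an \(A\)-equivariant unitary, use that unitary to transport everything onto a single Hilbert \(D\)-module carrying compatible nondegenerate \(B_1\)- and \(B_2\)-actions, and then invoke the universal property of \(B_1\star_A B_2\); modifications are handled identically. The only organisational difference is that the paper phrases the reduction as ``every transformation is isomorphic to one with \(\Hilm[F]_1=\Hilm[F]_2\) and \(U=\Id\)'' and checks nondegeneracy of the \(B_i\)-actions on~\(\Hilm[F]\) directly from \(A\cdot B_i=B_i\), rather than first establishing that the \(\iota_i\) are nondegenerate.
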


\begin{proof}
  When we turn the \Star{}homomorphism~\(\varphi_i\) for \(i=1,2\)
  into a correspondence~\(\Hilm_i\), we take the right ideal
  \(\varphi_i(A)\cdot B_i\), viewed as a Hilbert \(B_i\)\nb-module,
  and equipped with the left action of~\(A\) through~\(\varphi_i\).
  Our nondegeneracy assumption means that \(\Hilm_i=B_i\) as a right
  Hilbert \(B_i\)\nb-module.  Furthermore, we remark that
  \(\varphi_i(A)\subseteq \Comp(\Hilm_i)=B_i\) by assumption, so
  the~\(\Hilm_i\) are proper correspondences.  We will see later that
  properness is crucial to get colimits.

  Let~\(D\) be a \(\Cst\)\nb-algebra.  A transformation
  in~\(\Corrcat\) from our pushout diagram to the constant diagram
  on~\(D\) is given by correspondences \(\Hilm[F]_1\colon B_1\to D\),
  \(\Hilm[F]_2\colon B_2\to D\) and an isomorphism
  \[
  U\colon
  \Hilm[F]_1 \cong B_1\otimes_{B_1} \Hilm[F]_1 \to
  B_2\otimes_{B_2} \Hilm[F]_2 \cong \Hilm[F]_2
  \]
  of correspondences from~\(A\) to~\(D\).  That is, \(U\) is a unitary
  operator \(\Hilm[F]_1\to\Hilm[F]_2\) that intertwines the left
  actions of~\(A\) given by composing the actions of \(B_i\) with the
  \Star{}homomorphisms~\(\varphi_i\).  Here we have used the
  nondegeneracy of~\(\varphi_i\) to identify \(\Hilm_i=B_i\) as
  Hilbert \(B_i\)\nb-modules.

  A modification from \((\Hilm[F]_i,U)\) to \((\Hilm[F]_i',U')\) is
  given by isomorphisms of correspondences \(V_i\colon
  \Hilm[F]_i\to\Hilm[F]'_i\) for \(i=1,2\) that intertwine \(U\)
  and~\(U'\).

  Every such transformation is isomorphic to one where
  \(\Hilm[F]_1=\Hilm[F]_2\) as right Hilbert \(D\)\nb-modules
  and~\(U\) is the identity operator: the identity on~\(\Hilm[F]_1\)
  and \(U\colon \Hilm[F]_1\to\Hilm[F]_2\) is an invertible
  modification.  Hence restricting to transformations with
  \(\Hilm[F]_1=\Hilm[F]_2\) and \(U=\Id\) gives an equivalent
  groupoid.  So it does not change the colimit.  The intertwining
  condition for modifications now simply says that the unitaries
  \(\Hilm[F]_i\to\Hilm[F]'_i\) for \(i=1,2\) are the \emph{same}
  unitary, so we only have a single unitary that intertwines the
  actions of \(B_1\) and~\(B_2\), and hence the actions of~\(A\).

  If \(\Hilm[F]_1=\Hilm[F]_2\) and \(U=\Id\), then \(B_1\)
  and~\(B_2\) act on the same Hilbert module, and the actions
  composed with~\(\varphi_i\) coincide on~\(A\); this gives an action
  of the amalgamated free product \(B_1\star_A B_2\)
  on~\(\Hilm[F]_i\).  Since \(B_1\) and~\(B_2\) act
  nondegenerately, so does \(B_1\star_A B_2\).  Hence we get a
  correspondence \(B_1\star_A B_2\to D\).

  Conversely, a correspondence \(B_1\star_A B_2\to D\) gives a Hilbert
  module~\(\Hilm[F]\) with a nondegenerate left action of~\(B_1\star_A
  B_2\).  Since \(A\cdot B_i=B_i\), the embedding \(A\to B_1\star_A
  B_2\) is nondegenerate, so the action of~\(A\) on~\(\Hilm[F]\) is
  nondegenerate, and then so are the actions of~\(B_i\).  Thus we get
  a transformation from the pushout diagram to the constant diagram
  on~\(D\) with \(\Hilm[F]=\Hilm[F]_1=\Hilm[F]_2\) and~\(U\) the
  identity.  Thus we have found an equivalence between the groupoid of
  natural transformations and modifications and the groupoid of
  correspondences from \(B_1\star_A B_2\) to~\(D\).  This proves that
  \(B_1\star_A B_2\) is a colimit.
\end{proof}

\begin{corollary}
  Let \(\Hilm_i\) be proper, full correspondences from~\(A\)
  to~\(B_i\) for \(i=1,2\).  The pushout in~\(\Corrcat\) of
  \(\Hilm_1\) and~\(\Hilm_2\) is the amalgamated free product
  \(\Comp(\Hilm_1)\star_A \Comp(\Hilm_2)\).
\end{corollary}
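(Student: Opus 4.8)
The plan is to reduce this statement to the nondegenerate case already settled in Proposition~\ref{pro:amalgamated_free_product}, by replacing each~\(B_i\) with the \(\Cst\)\nb-algebra \(\Comp(\Hilm_i)\). Since \(\Hilm_i\) is proper, its left \(A\)\nb-action is a \Star{}homomorphism \(\varphi_i\colon A\to\Comp(\Hilm_i)\), and this homomorphism is nondegenerate: the nondegeneracy of the correspondence gives \(\overline{\varphi_i(A)\Hilm_i}=\Hilm_i\), and since \(\Comp(\Hilm_i)\) is the closed span of the rank\nobreakdash-one operators \(|\xi\rangle\langle\eta|\) with \(\varphi_i(a)|\xi\rangle\langle\eta|=|\varphi_i(a)\xi\rangle\langle\eta|\), this forces \(\overline{\varphi_i(A)\Comp(\Hilm_i)}=\Comp(\Hilm_i)\). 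Since \(\Hilm_i\) is also full, the defining left action of \(\Comp(\Hilm_i)\) together with the right \(B_i\)\nb-action makes \(\Hilm_i\) an imprimitivity bimodule between \(\Comp(\Hilm_i)\) and~\(B_i\); that is, \(\Hilm_i\) is an equivalence from \(\Comp(\Hilm_i)\) to~\(B_i\) in~\(\Corrcat\), with inverse the conjugate module \(\conj{\Hilm_i}\colon B_i\to\Comp(\Hilm_i)\). In particular, \(B_i\) and \(\Comp(\Hilm_i)\) are equivalent objects of~\(\Corrcat\).

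The key point is that, under this equivalence, the correspondence \(\Hilm_i\colon A\to B_i\) becomes the homomorphism~\(\varphi_i\). There is a canonical factorisation isomorphism \(\Hilm_i\cong\Hilm_i\otimes_{\Comp(\Hilm_i)}\varphi_i\) exhibiting \(\Hilm_i\) as the composite \(A\xrightarrow{\varphi_i}\Comp(\Hilm_i)\xrightarrow{\Hilm_i}B_i\), and composing the arrow \(\Hilm_i\colon A\to B_i\) with the inverse equivalence \(\conj{\Hilm_i}\) recovers~\(\varphi_i\), since \(\conj{\Hilm_i}\otimes_{B_i}\Hilm_i\cong\Comp(\Hilm_i)\) as a \(\Comp(\Hilm_i)\)\nb-bimodule with the left \(A\)\nb-action inherited from~\(\varphi_i\). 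I would use these two isomorphisms to build an equivalence of diagrams in~\(\Corrcat^{\Cat}\) from the given pushout diagram~\(F\), with legs \(\Hilm_1,\Hilm_2\), to the pushout diagram~\(F'\) with legs the nondegenerate homomorphisms \(\varphi_1\colon A\to\Comp(\Hilm_1)\), \(\varphi_2\colon A\to\Comp(\Hilm_2)\). Concretely, the transformation \(\alpha\colon F\to F'\) has the identity correspondence on~\(A\) at the apex and the equivalences \(\conj{\Hilm_i}\) at the two feet, and the invertible naturality \(2\)\nb-arrows at the two legs are exactly the isomorphisms just described (both composites being canonically isomorphic to~\(\varphi_i\)). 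Since every component of~\(\alpha\) is an equivalence in~\(\Corrcat\), the transformation~\(\alpha\) is an equivalence in the functor bicategory~\(\Corrcat^{\Cat}\).

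Finally, Proposition~\ref{pro:amalgamated_free_product} applies to~\(F'\) because the~\(\varphi_i\) are nondegenerate, giving a colimit \(C\defeq\Comp(\Hilm_1)\star_A\Comp(\Hilm_2)\) with universal cone \(\vartheta'\colon F'\to\const_C\). To transport this to~\(F\), I would compose~\(\vartheta'\) with~\(\alpha\): for every~\(D\) the induced functor \(\Corrcat(C,D)\to\Corrcat^{\Cat}(F',\const_D)\to\Corrcat^{\Cat}(F,\const_D)\) is a composite of two equivalences—the first by the universal property of~\(C\) and the second because precomposition with the equivalence~\(\alpha\) is an equivalence of cone categories—and this composite is composition with \(\vartheta'\circ\alpha\). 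Hence \(C\) with the cone \(\vartheta'\circ\alpha\) satisfies the condition of Definition~\ref{def:colimit} and is a colimit of~\(F\); equivalently, one may invoke the functoriality of colimits from Proposition~\ref{pro:colim_functorial}, which sends the equivalence~\(\alpha\) to an equivalence \(\colim F\to\colim F'\). The main thing to carry out carefully is the construction of the diagram equivalence: producing coherent invertible naturality \(2\)\nb-arrows and keeping straight the reversed order of the internal tensor product in~\eqref{eq:compose_corr}. Everything else is then formal, and one concludes that the pushout of \(\Hilm_1\) and~\(\Hilm_2\) is \(\Comp(\Hilm_1)\star_A\Comp(\Hilm_2)\).
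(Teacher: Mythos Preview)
Your proposal is correct and follows essentially the same route as the paper: use fullness of~\(\Hilm_i\) to get a Morita--Rieffel equivalence \(\Comp(\Hilm_i)\sim B_i\), factor \(\Hilm_i\) through the nondegenerate \Star{}homomorphism \(\varphi_i\colon A\to\Comp(\Hilm_i)\) to obtain an equivalent pushout diagram, apply Proposition~\ref{pro:amalgamated_free_product}, and transport the colimit back via Proposition~\ref{pro:colim_functorial}. The only thing to fix is the tensor order you yourself flagged: the factorisation should read \(\Hilm_i\cong \varphi_i\otimes_{\Comp(\Hilm_i)}\Hilm_i\) and the composite with the inverse equivalence is \(\Hilm_i\otimes_{B_i}\conj{\Hilm_i}\cong\Comp(\Hilm_i)\), not the reversed expressions you wrote.
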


\begin{proof}
  Since~\(\Hilm_i\) is full, it gives a Morita--Rieffel equivalence
  between \(\Comp(\Hilm_i)\) and~\(B_i\).  Hence the diagrams
  in~\(\Corrcat\) given by \(\Hilm_1\) and~\(\Hilm_2\) and by the
  \Star{}\hspace{0pt}homomorphisms \(A\to\Comp(\Hilm_i)\) for \(i=1,2\) from the
  left \(A\)\nb-module structures on~\(\Hilm_i\) are isomorphic.  The
  latter diagram has \(\Comp(\Hilm_1)\star_A \Comp(\Hilm_2)\) as a
  colimit by Proposition~\ref{pro:amalgamated_free_product}.  Since
  the construction of colimits is functorial by
  Proposition~\ref{pro:colim_functorial}, this is also a colimit of
  the original diagram.
\end{proof}

\subsection{An example of a coequaliser}
\label{sec:coequalisers}

A \emph{coequaliser} is a colimit of a diagram consisting of two
parallel arrows
\(\alpha_1, \alpha_2 \colon A_1\rightrightarrows A_2\).
These particular colimits quickly become very complicated, as the
following example shows:

\begin{example}
  Consider the coequaliser of the following diagram:
  \begin{equation}
    \label{eq:coequaliser_example}
    \begin{tikzpicture}[scale=1,cdar,baseline=(current bounding box.west)]
      \node (1) at (0,0) {\(\C\)};
      \node (2) at (1,0) {\(\C\)};
      \draw[transform canvas={yshift=.5ex}] (1) -- node {\(\scriptstyle \C^m\)}
(2);
      \draw[transform canvas={yshift=-.1ex}] (1) -- node[swap] {\(\scriptstyle
\C^n\)} (2);
    \end{tikzpicture}
  \end{equation}
  The groupoid of transformations from the above diagram to the
  constant diagram on a \(\Cst\)\nb-algebra~\(D\) is equivalent to
  the groupoid that has pairs \((\Hilm[F],U)\) for a Hilbert
  \(D\)\nb-module~\(\Hilm[F]\) and a unitary operator
  \[
  U\colon \Hilm[F]^n \cong \C^n\otimes_{\C} \Hilm[F]\xrightarrow{\cong}
  \C^m\otimes_{\C} \Hilm[F]\cong \Hilm[F]^m
  \]
  as objects.  We may write~\(U\) as a matrix \(U=(u_{i,j})\) with
  \(u_{i,j}\in\Bound(\Hilm[F])\) for \(1\le i\le n\),
  \(1\le j \le m\).  The operator~\(U\) is unitary if and only if
  \begin{equation}
    \label{eq:coequaliser_example_relations}
    \sum_{k=1}^m u_{i_1 k} u_{i_2 k}^* = \delta_{i_1,i_2},\quad
    \sum_{k=1}^n u_{kj_1}^* u_{kj_2}=\delta_{j_1,j_2}
  \end{equation}
  for all \(1\le i_1,i_2\le n\), \(1\le j_1,j_2 \le m\).  Hence the
  universal \(\Cst\)\nb-algebra~\(U_{m\times n}^\mathrm{nc}\)
  generated by the elements~\(u_{ij}\) for \(i=1,\dotsc,n\),
  \(j=1,\dotsc,m\) that
  satisfy~\eqref{eq:coequaliser_example_relations} is a coequaliser
  of~\eqref{eq:coequaliser_example}.  For \(m=n\), this
  \(\Cst\)\nb-algebra is introduced by Lawrence
  Brown~\cite{Brown:Ext_free} and studied further by Kevin
  McClanahan, who showed that it has no projections
  (\cite{McClanahan:unitary_matrix}*{Corollary 2.7}) and is
  KK-equivalent to \(\Cst(\Z)\cong \Cont(\T)\)
  (\cite{McClanahan:KK_twisted}*{Proposition 5.5}).  The
  \(\Cst\)\nb-algebras~\(U_{m\times n}^\mathrm{nc}\) are
  \(\Cst\)\nb-algebra analogues of the algebras introduced by
  Leavitt~\cite{Leavitt:Module_type}, and they are prototypical
  examples of separated graph \(\Cst\)\nb-algebras
  (see~\cite{Ara-Goodearl:C-algebras_separated_graphs}).
\end{example}

\section{Colimits for group and crossed module actions}
\label{sec:groups_and_cm}

We now consider colimits where~\(\Cat\) is a group~\(G\) or a crossed
module.  We consider both target bicategories \(\Csttwocat\) and
\(\Corrcat\).  In all these cases, the identification of the colimit
with an appropriate ``crossed product'' is a mere reformulation of
results in \cites{Buss-Meyer-Zhu:Higher_twisted,
  Buss-Meyer:Crossed_products}.  Hence we will be rather brief.  These
results are trivial, but they are important motivation to look
at colimits in bicategories.

To make the results below look more surprising, we briefly consider
the colimit for a group action in the usual category of
\(\Cst\)\nb-algebras and \Star{}homomorphisms, without any
\(2\)\nb-arrows.  A group action by automorphisms is, indeed, the same
as a functor from~\(G\) to the category of \(\Cst\)\nb-algebras, given
by a \(\Cst\)\nb-algebra~\(A\) and \(\alpha_g\in\Aut(A)\) satisfying
\(\alpha_g \alpha_h = \alpha_{gh}\).  A cone over this diagram is a
\(\Cst\)\nb-algebra~\(B\) with a \Star{}homomorphism \(f\colon A\to
B\) such that \(f\circ\alpha_g=f\) for all \(g\in G\).  Thus~\(f\)
vanishes on the ideal~\(I_\alpha\) generated by \(\alpha_g(a)-a\) for
all \(g\in G\), \(a\in A\).  Indeed, the quotient map \(A\to
A/I_\alpha\) is \emph{the universal cone}.  Hence the colimit
is~\(A/I_\alpha\).  This is very often zero, and certainly not an
object worth studying.

When working in a bicategory, we replace the \emph{condition}
\(f\circ\alpha_g = f\) by \emph{extra data}, say, by a unitary~\(u_g\)
with \(u_g f(a) u_g^* = f(\alpha_g(a))\) for all \(a\in A\).  Thus the
bicategorical colimit is larger than~\(A\), very much
unlike~\(A/I_\alpha\) above.

The objects of~\(\Csttwocat^G\) are described concretely in
\cite{Buss-Meyer-Zhu:Higher_twisted}*{\S3.1.1} as Busby--Smith twisted
actions of~\(G\); those of~\(\Corrcat^G\) are equivalent to
saturated Fell bundles over~\(G\).  The transformations in
\(\Csttwocat^G\) and~\(\Corrcat^G\) are described concretely in
\cite{Buss-Meyer-Zhu:Higher_twisted}*{\S3.2}; modifications in
\(\Csttwocat^G\) and~\(\Corrcat^G\) are described concretely in
\cite{Buss-Meyer-Zhu:Higher_twisted}*{\S3.3}.  Results
in~\cite{Buss-Meyer-Zhu:Higher_twisted} immediately give the
following proposition:

\begin{proposition}
  \label{pro:colim_group_action_Csttwo}
  Let~\(G\) be a group.  Let \(\alpha\colon G\to\Aut(A)\) and
  \(\omega\colon G\times G\to\UM(A)\) be a Busby--Smith twisted
  action of~\(G\) on a \(\Cst\)\nb-algebra~\(A\).  The crossed
  product \(A\rtimes_{\alpha,\omega} G\) is a colimit of the functor
  \(F\colon G\to\Csttwocat\) associated to \((A,\alpha,\omega)\).
\end{proposition}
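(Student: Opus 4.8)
The plan is to verify Definition~\ref{def:colimit} directly for \(x = A\rtimes_{\alpha,\omega} G\), exhibiting a canonical transformation \(\vartheta\colon F\to\const_{A\rtimes G}\) and checking that composition with~\(\vartheta\) gives an equivalence of categories \(\Csttwocat(A\rtimes_{\alpha,\omega}G,\,D)\xrightarrow{\cong}\Csttwocat^G(F,\const_D)\) for every \(\Cst\)\nb-algebra~\(D\). Since the objects, arrows, and \(2\)\nb-arrows of both \(\Csttwocat^G\) and~\(\Csttwocat\) are already given concrete descriptions in \cite{Buss-Meyer-Zhu:Higher_twisted}, the proof reduces to recognising that each side of the equivalence is literally a piece of the classical universal property of the twisted crossed product. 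Everything is a reformulation; no genuinely new analysis is needed.

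First I would unwind the definitions on the right-hand side. An object of \(\Csttwocat^G(F,\const_D)\) is a transformation from~\(F\) to the constant diagram on~\(D\); by \cite{Buss-Meyer-Zhu:Higher_twisted}*{\S3.2}, for group actions this is exactly a nondegenerate \Star{}homomorphism \(f\colon A\to\Mult(D)\) together with unitaries \(u_g\in\UM(D)\), \(g\in G\), satisfying the covariance relations \(u_g f(a) u_g^* = f(\alpha_g(a))\) and the cocycle condition \(u_g u_h = f(\omega(g,h))\, u_{gh}\). This is precisely a \emph{covariant representation} of the Busby--Smith twisted action \((A,\alpha,\omega)\) in \(\Mult(D)\). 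A modification between two such transformations, by \cite{Buss-Meyer-Zhu:Higher_twisted}*{\S3.3}, is a unitary \(w\in\UM(D)\) intertwining the homomorphisms and the~\(u_g\); that is, an intertwiner of covariant representations. So the groupoid \(\Csttwocat^G(F,\const_D)\) is exactly the groupoid of covariant representations of \((A,\alpha,\omega)\) in \(\Mult(D)\) and their unitary intertwiners.

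Next I would unwind the left-hand side. An arrow \(A\rtimes_{\alpha,\omega}G\to D\) in~\(\Csttwocat\) is a nondegenerate \Star{}homomorphism \(A\rtimes_{\alpha,\omega}G\to\Mult(D)\), and a \(2\)\nb-arrow between two such is a unitary multiplier conjugating one into the other. By the defining universal property of the twisted crossed product, nondegenerate \Star{}homomorphisms \(A\rtimes_{\alpha,\omega}G\to\Mult(D)\) correspond bijectively and naturally to covariant representations of \((A,\alpha,\omega)\) in \(\Mult(D)\), and this correspondence carries unitary intertwiners of representations to unitary multipliers conjugating the integrated forms. The canonical transformation~\(\vartheta\) is the one corresponding to the universal covariant representation of \((A,\alpha,\omega)\) inside \(\Mult(A\rtimes_{\alpha,\omega}G)\). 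Under these identifications, composition with~\(\vartheta\) is exactly the integration map, so the two groupoids are equivalent, indeed isomorphic, naturally in~\(D\). This is the required equivalence.

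The only point demanding care, rather than the main obstacle, is matching conventions: one must check that the cocycle and covariance identities coming out of the transformation definition in \cite{Buss-Meyer-Zhu:Higher_twisted} agree on the nose with the relations defining \(A\rtimes_{\alpha,\omega}G\), including the placement of \(\omega\) versus \(\omega^*\) and left versus right actions, and that nondegeneracy is preserved in both directions so that the representations genuinely act on unitary multipliers. Since the identification of functors \(G\to\Csttwocat\) with twisted actions and of transformations with covariant representations is already carried out in \cite{Buss-Meyer-Zhu:Higher_twisted}, I would simply invoke those results and state that the universal property of the crossed product now \emph{is} the universal property defining the colimit, so the two objects coincide. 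This is why the proof can be kept very short.
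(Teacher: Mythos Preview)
Your proposal is correct and follows essentially the same approach as the paper: identify transformations \(F\to\const_D\) with covariant representations and modifications with unitary intertwiners via \cite{Buss-Meyer-Zhu:Higher_twisted}, then invoke the defining universal property of the twisted crossed product to conclude that the groupoids \(\Csttwocat^G(F,\const_D)\) and \(\Csttwocat(A\rtimes_{\alpha,\omega}G,D)\) are naturally isomorphic. The paper's proof is slightly terser and cites the specific Examples~3.8 and~3.13 of \cite{Buss-Meyer-Zhu:Higher_twisted} rather than the surrounding sections, but the substance is the same.
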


\begin{proof}
  Let~\(D\) be a \(\Cst\)\nb-algebra.  The functor \(\const_D\colon
  G\to\Csttwocat\) corresponds to the trivial action of~\(G\)
  on~\(D\).  A transformation from~\(F\) to \(\const_D\) is equivalent
  to a covariant representation of \((A,G,\alpha,\omega)\) in
  \(\Mult(D)\), that is, a nondegenerate representation
  \(\varrho\colon A\to\Mult(D)\) and a map \(\pi\colon G\to\UM(D)\)
  satisfying \(\pi_g\varrho(a)\pi_g^*= \varrho(\alpha_g(a))\) for all
  \(g\in G\), \(a\in A\) and
  \(\pi_{g_1}\pi_{g_2}=\varrho(\omega(g_1,g_2))\pi_{g_1g_2}\) for all
  \(g_1,g_2\in G\) (see \cite{Buss-Meyer-Zhu:Higher_twisted}*{Example
    3.8}).  Modifications between such transformations are the same as
  unitary equivalences of covariant representations by
  \cite{Buss-Meyer-Zhu:Higher_twisted}*{Example 3.13}.

  The crossed product is defined to be universal for covariant
  representations.  That is, there is a bijection between
  transformations from~\(F\) to \(\const_D\) and morphisms from
  \(A\rtimes_{\alpha,\omega} G\) to~\(D\); the modifications between
  the transformations corresponding to covariant representations
  \((\varrho,\pi)\) and \((\varrho',\pi')\) are unitary multipliers~\(u\)
  of~\(D\) with \(u\varrho(a)u^*=\varrho'(a)\) for all \(a\in A\) and
  \(u\pi(g)u^*=\pi'(g)\) for all \(g\in G\).  These are exactly the
  unitaries that intertwine the induced representations of
  \(A\rtimes_{\alpha,\omega} G\).  Thus the groupoids
  \(\Csttwocat^G(F,\const_D)\) and
  \(\Csttwocat(A\rtimes_{\alpha,\omega} G,D)\) are naturally
  isomorphic.
\end{proof}

For group actions by correspondences, that is, saturated Fell bundles,
the section \(\Cst\)\nb-algebra plays the role of the crossed
product:

\begin{proposition}
 \label{pro:colimit_Fell_bundle}
  Let~\(G\) be a group and let \((A_g)_{g\in G}\) be a saturated Fell
  bundle over~\(G\), viewed as a functor \(F\colon G\to \Corrcat\).  The
  section \(\Cst\)\nb-algebra of~\((A_g)_{g\in G}\) is a colimit
  of~\(F\).
\end{proposition}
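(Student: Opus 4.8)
The plan is to follow the proof of Proposition~\ref{pro:colim_group_action_Csttwo}: for an arbitrary \(\Cst\)\nb-algebra~\(D\) I will make the groupoid \(\Corrcat^G(F,\const_D)\) explicit, recognise it as the groupoid of representations of the Fell bundle \((A_g)_{g\in G}\) on Hilbert \(D\)\nb-modules, and then match it with \(\Corrcat(B,D)\) for the section \(\Cst\)\nb-algebra~\(B\) by means of that algebra's universal property. Throughout, write \(A\defeq A_e\) for the unit fibre, so that the single object of~\(G\) is sent by~\(F\) to~\(A\) and each \(g\in G\) to the correspondence \(A_g\colon A\to A\).

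First I would unwind a transformation \(\vartheta\colon F\to\const_D\). Its single component is a correspondence \(\Hilm[F]\colon A\to D\), and for each \(g\in G\) the coherence \(2\)\nb-arrow is an isomorphism of correspondences \(\Hilm[F]\xrightarrow{\cong}A_g\otimes_A\Hilm[F]\); here I use that \(\const_D\) sends every arrow to \(\Id_D\) and that composition in~\(\Corrcat\) tensors in the order fixed by~\eqref{eq:compose_corr}. The inverse \(2\)\nb-arrow is a map \(A_g\otimes_A\Hilm[F]\to\Hilm[F]\), that is, a left action \(\pi_g\colon A_g\to\Bound(\Hilm[F])\) of the fibre~\(A_g\). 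The coherence conditions on~\(\vartheta\) turn into the relations \(\pi_g(a)\pi_h(b)=\pi_{gh}(ab)\) and \(\pi_g(a)^*=\pi_{g^{-1}}(a^*)\) with \(\pi_e\) nondegenerate, so that \(\vartheta\) is exactly a representation of the Fell bundle on~\(\Hilm[F]\); this is carried out in \cite{Buss-Meyer-Zhu:Higher_twisted}*{\S3.2}. By \cite{Buss-Meyer-Zhu:Higher_twisted}*{\S3.3}, a modification between two such transformations is a unitary \(\Hilm[F]\to\Hilm[F]'\) intertwining all the~\(\pi_g\), that is, a unitary intertwiner of Fell bundle representations.

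It then remains to invoke the defining universal property of the section \(\Cst\)\nb-algebra \(B\defeq\Cst((A_g)_{g\in G})\): nondegenerate representations of \((A_g)_{g\in G}\) on a Hilbert \(D\)\nb-module~\(\Hilm[F]\) are in bijection with nondegenerate representations of~\(B\) on~\(\Hilm[F]\), that is, with correspondences \(B\to D\), and a unitary intertwines two bundle representations if and only if it intertwines the associated representations of~\(B\). Combining this with the previous paragraph gives an isomorphism of groupoids
\[
\Corrcat^G(F,\const_D)\;\cong\;\Corrcat(B,D),
\]
natural in~\(D\): composing a transformation with the constant transformation attached to a correspondence \(D\to D'\) tensors each~\(\pi_g\) with that correspondence, which matches tensoring the correspondence \(B\to D\). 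As noted after Definition~\ref{def:colimit}, natural equivalences of this form exhibit~\(B\) as a colimit of~\(F\); the universal cone is the transformation corresponding to the identity correspondence on~\(B\), built from the canonical representation of \((A_g)_{g\in G}\) in \(\Mult(B)\).

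I expect the only genuine work to be the first translation: verifying that the coherence data of a transformation into~\(\const_D\) reproduce precisely the multiplicativity and involution axioms of a Fell bundle representation, and that saturation of \((A_g)_{g\in G}\) is what makes the \(2\)\nb-arrows \(\Hilm[F]\cong A_g\otimes_A\Hilm[F]\) honest isomorphisms rather than mere morphisms. Since both points are already established in~\cite{Buss-Meyer-Zhu:Higher_twisted}, in the final write-up this step becomes a citation rather than a computation, and the proof reduces to the bookkeeping of the two paragraphs above.
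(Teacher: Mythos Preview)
Your proposal is correct and follows essentially the same route as the paper: identify \(\Corrcat^G(F,\const_D)\) with the groupoid of nondegenerate Fell bundle representations on Hilbert \(D\)\nb-modules via the description of transformations and modifications in~\cite{Buss-Meyer-Zhu:Higher_twisted}, and then match this with \(\Corrcat(\Cst(A_g),D)\) using the universal property of the section \(\Cst\)\nb-algebra. The paper's proof is slightly terser (it directly cites the bijection with pairs \((\Hilm,\pi)\) rather than unpacking the coherence \(2\)\nb-arrows), but the argument is the same.
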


\begin{proof}
  Let \(D\) be a \(\Cst\)\nb-algebra.  Then \(\const_D\) corresponds
  to the constant Fell bundle with fibres~\(D\), which describes the
  trivial action of~\(G\) on~\(D\).  Transformations to \(\const_D\)
  in~\(\Corrcat^G\) are in bijection with pairs~\((\Hilm,\pi)\),
  where~\(\Hilm\) is a Hilbert \(D\)\nb-module and \(\pi\colon
  \bigsqcup_{g\in G} A_g \to\Bound(\Hilm)\) is a nondegenerate Fell
  bundle representation (see the discussion before
  \cite{Buss-Meyer-Zhu:Higher_twisted}*{Definition 3.12}).
  Modifications between such transformations are equivalent to unitary
  intertwiners between Fell bundle representations.

  The section \(\Cst\)\nb-algebra \(C\defeq \Cst(A_g)_{g\in G}\) is
  defined as the \(\Cst\)\nb-completion of the convolution algebra of
  sections of the Fell bundle.  By definition, representations of a
  Fell bundle integrate to \Star{}representations of this section
  \(\Cst\)\nb-algebra, and all representations of~\(C\) come from Fell
  bundle representations.  Furthermore, a Fell bundle representation
  is nondegenerate if and only if the resulting representation
  of~\(C\) is nondegenerate.  A nondegenerate representation of~\(C\)
  on a Hilbert \(D\)\nb-module is the same as a correspondence
  from~\(C\) to~\(D\).  Furthermore, an operator intertwines the Fell
  bundle representations if and only if it intertwines the resulting
  representations of~\(C\), that is, is an isomorphism of
  correspondences.  Hence the groupoids \(\Corrcat(F,\const_D)\) and
  \(\Corrcat(C,D)\) are naturally isomorphic.
\end{proof}

Summing up, we merely have to inspect the description of
transformations and modifications between functors \(G\to\Csttwocat\)
or \(G\to\Corrcat\) in~\cite{Buss-Meyer-Zhu:Higher_twisted} to see
that the colimit in either case is the crossed product or Fell bundle
section algebra, respectively.

\smallskip

Now let~\(\cm\) be a \emph{crossed module}; that is, \(\cm\) consists
of two groups \(G\) and~\(H\) with homomorphisms \(\partial\colon H\to
G\) and \(c\colon G\to\Aut(H)\), such that \(\partial(c_g(h)) =
g\partial(h) g^{-1}\) and \(c_{\partial h}(k) = hkh^{-1}\) for all
\(g\in G\), \(h,k\in H\).

Strict actions of crossed modules on \(\Cst\)\nb-algebras and crossed
products for such actions are defined
in~\cite{Buss-Meyer-Zhu:Non-Hausdorff_symmetries}.  These are more
special than functors \(\cm\to\Csttwocat\), which are discussed in
\cite{Buss-Meyer-Zhu:Higher_twisted}*{\S4.1.1}.  Functors
\(\cm\to\Corrcat\) are described in
\cite{Buss-Meyer:Crossed_products}*{Theorem 2.11}, generalising the
notion of a saturated Fell bundle from groups to crossed modules.  The
crossed product for a functor \(F\colon \cm\to\Corrcat\) is defined in
\cite{Buss-Meyer:Crossed_products}*{Definition 2.8} by a universal
property and identified more concretely in
\cite{Buss-Meyer:Crossed_products}*{Proposition 2.17}.

\begin{proposition}
  \label{pro:colim_cm_action_Csttwo}
  The crossed product for a crossed module action by correspondences
  is a colimit in~\(\Corrcat\).
\end{proposition}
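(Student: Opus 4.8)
The plan is to follow the same template as the proofs of Propositions~\ref{pro:colim_group_action_Csttwo} and~\ref{pro:colimit_Fell_bundle}: fix a \(\Cst\)\nb-algebra~\(D\), make the groupoid \(\Corrcat^{\cm}(F,\const_D)\) of transformations and modifications explicit, and match it with the universal property that defines the crossed product. First I would note that \(\const_D\) corresponds to the trivial action of~\(\cm\) on~\(D\), so that a transformation \(F\to\const_D\) unpacks, by the description of transformations between functors \(\cm\to\Corrcat\) in~\cite{Buss-Meyer:Crossed_products}, into exactly the data of a covariant representation of the crossed-module action on some Hilbert \(D\)\nb-module~\(\Hilm\). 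Concretely this is a nondegenerate representation of the unit fibre together with the families of operators implementing the \(G\)\nb- and \(H\)\nb-parts of the action, subject to the compatibility relations coming from the crossed-module identities \(\partial(c_g(h)) = g\partial(h)g^{-1}\) and \(c_{\partial h}(k) = hkh^{-1}\).

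Next I would identify modifications. A modification between two such transformations is, again by the explicit description in~\cite{Buss-Meyer:Crossed_products}, a unitary operator between the underlying Hilbert \(D\)\nb-modules that intertwines all of the representing data, that is, a unitary intertwiner of covariant representations. This shows that \(\Corrcat^{\cm}(F,\const_D)\) is equivalent to the groupoid whose objects are covariant representations of the action on Hilbert \(D\)\nb-modules and whose arrows are unitary intertwiners.

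Then I would invoke the universal property of the crossed product. The crossed product~\(C\) for a functor \(\cm\to\Corrcat\) is defined in~\cite{Buss-Meyer:Crossed_products}*{Definition 2.8} precisely so that its nondegenerate representations are in natural bijection with covariant representations of the action, and the concrete identification in~\cite{Buss-Meyer:Crossed_products}*{Proposition 2.17} guarantees that every covariant representation integrates to one of~\(C\) and conversely. Since a nondegenerate representation of~\(C\) on a Hilbert \(D\)\nb-module is the same as a correspondence \(C\to D\), and an operator intertwines the covariant representations if and only if it is an isomorphism of the corresponding correspondences, composing these equivalences yields a natural equivalence of groupoids \(\Corrcat(C,D) \xrightarrow{\cong} \Corrcat^{\cm}(F,\const_D)\). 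This is exactly the property required by Definition~\ref{def:colimit}, so~\(C\) is a colimit of~\(F\).

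The main obstacle I expect is bookkeeping rather than anything deep. Because \(\cm\) is a genuine \(2\)\nb-category, a transformation carries more structure than in the group case: there are components attached both to the arrows coming from~\(G\) and to the \(2\)\nb-arrows coming from~\(H\), linked by coherence conditions. The delicate point is therefore to confirm that the coherence conditions built into the notion of a transformation in \(\Corrcat^{\cm}\) correspond \emph{exactly} to the defining relations of a covariant representation in~\cite{Buss-Meyer:Crossed_products}, with no condition lost or added. Once this dictionary is verified term by term, the identification of modifications with unitary intertwiners and the appeal to the universal property are routine, exactly as in the group case.
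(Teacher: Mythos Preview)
Your proposal is correct and follows precisely the same approach as the paper's own proof, which simply says that one makes transformations \(F\to\const_D\) and modifications explicit and observes that the resulting universal property coincides with the defining universal property of the crossed product from~\cite{Buss-Meyer:Crossed_products}. In fact, you supply more detail than the paper does, which dismisses the verification as ``routine checking'' and omits it entirely.
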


\begin{proof}
  Let \(F\colon \cm\to\Corrcat\) be a functor.  As in the group case,
  the proof is by making explicit what transformations
  \(F\to\const_D\) and modifications between them are and observing
  that the resulting universal property for the colimit is the same
  one as the defining universal property of the crossed product.
  Since this is routine checking, we omit further details.
\end{proof}

\section{A single endomorphism}
\label{sec:single_endo}

Before we study colimits of arbitrary shape, we look at an important
special case: let~\(\Cat\) be the monoid~\((\N,+)\), viewed as a
category with a single object.

A functor \(\Cat\to\Corrcat\) is given by a \(\Cst\)\nb-algebra~\(A\),
correspondences \(\Hilm_n\colon A\to A\) for \(n\in\N\) and
isomorphisms of correspondences \(\mu_{n,m}\colon \Hilm_n\otimes_A
\Hilm_m \cong \Hilm_{n+m}\) for all \(n,m\in\N\), such
that~\(\Hilm_0\) is the identity correspondence, \(\mu_{0,m}\)
and~\(\mu_{n,0}\) are the canonical transformations, and the
multiplication maps~\(\mu_{n,m}\) are associative in a suitable sense.
This is a special case of
Proposition~\ref{pro:functor_category-diagram} below.

A transformation between such diagrams \((A,\Hilm_n,\mu_{n,m})\),
\((B,\Hilm[F]_n,v_{n,m})\), is given by a correspondence
\(\Hilm[G]\colon A\to B\) and isomorphisms
\begin{equation}
  \label{eq:endo_trafo_w_n}
  \Hilm_n\otimes_A \Hilm[G] \xrightarrow[\cong]{w_n}
  \Hilm[G]\otimes_B \Hilm[F]_n
\end{equation}
for all \(n\in\N\), subject to compatibility conditions with the
\(\mu_{n,m}\) and~\(v_{n,m}\) for \(n,m\in\N\) and the condition
that~\(w_0\) should be the canonical isomorphism (see
Proposition~\ref{pro:trafo_category-diagram}).  A modification
between two such transformations, \((\Hilm[G],w_n)\) and
\((\Hilm[G]',w_n')\), is given by an isomorphism of correspondences
\(\Hilm[G]\to\Hilm[G]'\) intertwining the \(w_n\) and~\(w_n'\) in
the obvious sense (see also
Proposition~\ref{pro:modification_category-diagram}).

This data can be simplified because the monoid~\((\N,+)\) is freely
generated by \(1\in\N\).  For a functor \(\N\to\Corrcat\), it is
enough to give~\(A\) and a single correspondence \(\Hilm=\Hilm_1\),
with no further data or conditions.  We may extend this to a functor
in the above sense by letting \(\Hilm_n\defeq \Hilm^{\otimes_A n}\)
for \(n\in\N\) (understood to be the identity correspondence if
\(n=0\)), and letting~\(\mu_{n,m}\) be the canonical map (this is the
identity map up to the associators, which we have dropped from our
notation).  The conditions on the~\(\mu_{n,m}\) ensure that any
functor is isomorphic to one of this form.

Next, a transformation is specified by a correspondence~\(\Hilm[G]\)
and an isomorphism
\[
w=w_1\colon \Hilm \otimes_A \Hilm[G] \cong \Hilm[G]\otimes_B
\Hilm[F],
\]
with no condition on~\(w\): iteration of~\(w_1\) provides the
isomorphisms~\(w_n\) for \(n\in\N\) as in~\eqref{eq:endo_trafo_w_n},
and the compatibility conditions for the~\(w_n\) say that any
transformation is generated from~\(w_1\) in this way.  Finally, for a
modification, it is enough to require the intertwining condition
for~\(w_1\), then the condition follows for~\(w_n\) for all
\(n\in\N\).  In brief, the bicategory of functors \(\N\to\Corrcat\) is
equivalent to the following simpler bicategory:
\begin{enumerate}
\item objects are given by a \(\Cst\)\nb-algebra~\(A\) and a
  correspondence \(\Hilm\to\Hilm\);
\item arrows \((A,\Hilm)\to (B,\Hilm[F])\) are given by a
  correspondence \(\Hilm[G]\colon A\to B\) and an isomorphism of
  correspondences \(w\colon \Hilm[G]\otimes_B \Hilm[F]\cong
  \Hilm\otimes_A \Hilm[G]\);
\item \(2\)\nb-arrows \((\Hilm[G],w)\to (\Hilm[G]',w')\) are
  isomorphisms \(x\colon \Hilm[G]\to\Hilm[G]'\) such that
  \((\Id_{\Hilm}\otimes_A x)\circ w = w\circ (x\otimes_B
  \Id_{\Hilm[F]})\).
\end{enumerate}
We may use the simplified data to describe colimits as well, which
only require equivalences of categories.

We now analyse transformations from~\((A,\Hilm)\) to a constant
diagram \(\const_D\).  First, \(\const_D = (D,D)\), where the
second~\(D\) means the identity correspondence on~\(D\).  Hence the
isomorphism~\(w\) in a transformation may also be viewed as an
isomorphism \(\Hilm[G]\cong \Hilm\otimes_A \Hilm[G]\); here we use the
canonical isomorphism \(\Hilm[G]\otimes_D D\cong \Hilm[G]\).

Roughly speaking, we want to turn an isomorphism \(w\colon
\Hilm[G]\congto \Hilm\otimes_A \Hilm[G]\) into a representation of a
\(\Cst\)\nb-algebra on~\(\Hilm[G]\).  The necessary work is carried
out in~\cite{Albandik-Meyer:Product}.  First, the isomorphism
\(w\colon \Hilm[G]\congto \Hilm\otimes_A \Hilm[G]\) is turned into a
``representation'' \(\Hilm\to \Bound(\Hilm[G])\) by sending
\(\xi\in\Hilm\) to the operator
\[
\Hilm[G]\ni\eta \mapsto w^*(\xi\otimes\eta)\in \Hilm[G].
\]
This is a representation of the Hilbert module~\(\Hilm\) in the
standard sense, satisfying an extra nondegeneracy condition
corresponding to the surjectivity of~\(w^*\).  This extra
nondegeneracy condition is equivalent to the Cuntz--Pimsner covariance
condition \emph{provided~\(\Hilm\) is a proper correspondence} by
\cite{Albandik-Meyer:Product}*{Proposition 2.5}.  This leads to the
following theorem:

\begin{theorem}
  \label{the:CP_as_colimit}
  Let \(\Hilm\colon A\to A\) be a proper correspondence.  The
  Cuntz--Pimsner algebra of~\(\Hilm\) is a colimit of the
  corresponding diagrams \((\N,+)\to\Corrcat_\prop\) and
  \((\N,+)\to\Corrcat\).
\end{theorem}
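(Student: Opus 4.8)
The plan is to verify Definition~\ref{def:colimit} directly, by producing for each \(\Cst\)\nb-algebra~\(D\) a natural equivalence of groupoids
\[
\Corrcat(\CP_\Hilm,D)\;\cong\;\Corrcat^{(\N,+)}(F,\const_D),
\]
where~\(F\) is the diagram determined by the pair \((A,\Hilm)\) and~\(\CP_\Hilm\) denotes the Cuntz--Pimsner algebra. By the simplified description of functors, transformations and modifications obtained above, the right-hand groupoid has as objects the pairs \((\Hilm[G],w)\) consisting of a correspondence \(\Hilm[G]\colon A\to D\) and an isomorphism \(w\colon \Hilm[G]\congto \Hilm\otimes_A\Hilm[G]\), and as arrows the unitaries \(x\colon \Hilm[G]\to\Hilm[G]'\) with \((\Id_\Hilm\otimes_A x)\circ w=w'\circ x\). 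The left-hand groupoid has as objects the nondegenerate correspondences from~\(\CP_\Hilm\) to~\(D\), that is, the nondegenerate representations of~\(\CP_\Hilm\) on Hilbert \(D\)\nb-modules, with unitary intertwiners as arrows. It suffices to build quasi-inverse functors between these groupoids and to check naturality in~\(D\).

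First I would construct the functor from transformations to representations of~\(\CP_\Hilm\), following the recipe recalled before the theorem. Write \(\phi\colon A\to\Comp(\Hilm)\) for the (proper) left action on~\(\Hilm\) and \(\phi_D\colon A\to\Bound(\Hilm[G])\) for the left action of the correspondence~\(\Hilm[G]\). Given \((\Hilm[G],w)\), set \(S_\xi\eta\defeq w^*(\xi\otimes\eta)\) for \(\xi\in\Hilm\), \(\eta\in\Hilm[G]\). A direct computation shows that \((\phi_D,S)\) is a representation of the correspondence~\(\Hilm\) in the usual Toeplitz sense: since~\(w\) is an isometry intertwining the left \(A\)\nb-action and the right \(D\)\nb-action, one obtains \(S_\xi^*S_\eta=\phi_D(\braket{\xi}{\eta})\) and \(\phi_D(a)S_\xi=S_{\phi(a)\xi}\). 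This part is purely formal and uses no properness.

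The key step, and the only place where properness of~\(\Hilm\) is indispensable, is to match the remaining content of~\(w\) with covariance. Being an \emph{isomorphism}, \(w\) is in addition surjective; equivalently \(w^*\) is surjective, which is exactly the extra nondegeneracy condition on \((\phi_D,S)\) noted before the theorem. Because~\(\Hilm\) is proper, \cite{Albandik-Meyer:Product}*{Proposition 2.5} identifies this nondegeneracy condition with the Cuntz--Pimsner covariance condition for \((\phi_D,S)\). This equivalence is the heart of the argument and the main obstacle; for a non-proper~\(\Hilm\) it fails, which is the source of the pathologies alluded to in the introduction. Granting covariance, the universal property of the Cuntz--Pimsner algebra turns \((\phi_D,S)\) into a \Star{}homomorphism \(\CP_\Hilm\to\Bound(\Hilm[G])\); nondegeneracy of the correspondence~\(\Hilm[G]\) makes~\(A\), and hence~\(\CP_\Hilm\), act nondegenerately, so we obtain a correspondence \(\CP_\Hilm\to D\).

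Finally I would construct the inverse and verify functoriality. A nondegenerate representation of~\(\CP_\Hilm\) on a Hilbert \(D\)\nb-module~\(\Hilm[G]\) restricts along the canonical maps \(A\to\CP_\Hilm\) and \(\Hilm\to\CP_\Hilm\) to a covariant Toeplitz representation \((\phi_D,S)\); the formula \(w^*(\xi\otimes\eta)=S_\xi\eta\) then defines an isometry \(\Hilm\otimes_A\Hilm[G]\to\Hilm[G]\) whose surjectivity is supplied by covariance together with properness, so that its adjoint~\(w\) is the required isomorphism \(\Hilm[G]\congto\Hilm\otimes_A\Hilm[G]\). Since every piece of structure on each side is assembled naturally from the underlying Hilbert-module map, a unitary~\(x\) intertwines \((\Hilm[G],w)\) with \((\Hilm[G]',w')\) if and only if it intertwines the associated representations of~\(\CP_\Hilm\); hence the two assignments are mutually quasi-inverse functors of groupoids, and the construction is manifestly natural in~\(D\). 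This exhibits~\(\CP_\Hilm\) as a colimit in~\(\Corrcat\). Moreover, when the cone~\(\Hilm[G]\) is proper the generators \(S_\xi=w^*(\xi\otimes\blank)\) are compact, because the right action of~\(A\) on~\(\Hilm\) is nondegenerate and \(S_{\xi\cdot a}=S_\xi\phi_D(a)\); thus the resulting correspondence \(\CP_\Hilm\to D\) is again proper and the equivalence restricts to the proper subcategories, giving the colimit in~\(\Corrcat_\prop\) as well.
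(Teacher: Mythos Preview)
Your proposal is correct and follows essentially the same route as the paper's proof: both translate between the simplified transformations \((\Hilm[G],w)\) and Cuntz--Pimsner covariant representations via \(S_\xi\eta=w^*(\xi\otimes\eta)\), invoking \cite{Albandik-Meyer:Product}*{Propositions 2.3 and 2.5} to identify unitarity of~\(w\) with nondegeneracy and (using properness) with covariance, and then appeal to the universal property of~\(\CP_\Hilm\); the treatment of modifications and of the proper subcategory is also the same, although the paper deduces properness of the \(\CP_\Hilm\)\nb-correspondence more directly from \(A\cdot\CP_\Hilm=\CP_\Hilm\) rather than by showing each~\(S_\xi\) is compact.
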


\begin{proof}
  The Cuntz--Pimsner algebra~\(\CP_{\Hilm}\) is characterised by the
  universal property that \Star{}homomorphisms \(\CP_{\Hilm}\to D\)
  for a \(\Cst\)\nb-algebra~\(D\) are in bijection with pairs
  \((\varphi,\vartheta)\), where \(\varphi\colon A\to D\) is a
  \Star{}homomorphism and \(\vartheta\colon \Hilm\to D\) is a
  Cuntz--Pimsner covariant representation of~\(\Hilm\) (see
  \cite{Pimsner:Generalizing_Cuntz-Krieger}*{Theorem 3.12}).  In
  particular, \(A\subseteq\CP_{\Hilm}\), and inspection shows that
  this embedding is nondegenerate, that is, \(A\cdot \CP_{\Hilm}\) is
  dense in~\(\CP_{\Hilm}\).  It follows that the \Star{}homomorphism
  \(\CP_{\Hilm}\to\Bound(\Hilm[F])\) associated to \(\varphi\colon
  A\to\Bound(\Hilm[F])\) and \(\vartheta\colon
  \Hilm\to\Bound(\Hilm[F])\) is nondegenerate if and only
  if~\(\varphi\) is nondegenerate.  Thus a correspondence
  from~\(\CP_{\Hilm}\) to~\(D\) is the same as a correspondence
  \((\Hilm[F],\varphi)\) from~\(A\) to~\(D\) with a map
  \(\vartheta\colon \Hilm\to\Bound(\Hilm[F])\) which, together
  with~\(\varphi\), is a Cuntz--Pimsner covariant representation.

  Since~\(\Hilm\) is proper, the Cuntz--Pimsner covariance condition
  for~\(\vartheta\) holds if and only if~\(\vartheta\) is
  ``nondegenerate'' in the sense that the closed linear span of
  \(\vartheta(\Hilm)\cdot (\Hilm[F])\) is~\(\Hilm[F]\) (see
  \cite{Albandik-Meyer:Product}*{Proposition 2.5}).  Such
  nondegenerate correspondences are in bijection with isomorphisms of
  correspondences \(\Hilm\otimes\Hilm[F]\cong\Hilm[F]\) by
  \cite{Albandik-Meyer:Product}*{Proposition 2.3}.  So a
  correspondence from~\(\CP_{\Hilm}\) to~\(D\) is the same as a
  correspondence~\(\Hilm[F]\) from~\(A\) to~\(D\) with an isomorphism
  of correspondences \(\Hilm\otimes_A \Hilm[F]\cong\Hilm[F]\).  These
  are exactly the simplified transformations of functors
  \((\N,+)\to\Corrcat\), by the discussion above the theorem.

  Isomorphisms of correspondences \(\CP_{\Hilm}\to D\) are the same as
  unitaries \(\Hilm[F]\to\Hilm[F]'\) that intertwine the left actions
  of \(A\) and~\(\Hilm\).  Intertwining the left actions of~\(A\)
  means that they are isomorphisms of correspondences from~\(A\)
  to~\(D\), and intertwining the representations of~\(\Hilm\) means
  that they are modifications between the corresponding
  transformations of functors \((\N,+)\to\Corrcat\).  Hence the
  groupoid of correspondences \(\CP_{\Hilm}\to D\) and their
  isomorphisms is naturally isomorphic to the groupoid of simplified
  transformations \((A,\Hilm)\to\const_D\) and their modifications.
  This says that~\(\CP_{\Hilm}\) has the universal property of a
  colimit in~\(\Corrcat\).

  A correspondence \(\Hilm[F]\colon \CP_{\Hilm}\to D\) is proper if
  and only if the corresponding representation~\(\varphi\) of~\(A\)
  has image in the compact operators,
  \(\varphi(A)\subseteq\Comp(\Hilm[F])\); this is because \(A\cdot
  \CP_{\Hilm} = \CP_{\Hilm}\).  Hence~\(\CP_{\Hilm}\) has the
  universal property of a colimit in~\(\Corrcat_\prop\) as well.
\end{proof}

Note that the colimit is the Cuntz--Pimsner algebra right away, the
Cuntz--Toeplitz algebra plays no role; this is because of the
built-in nondegeneracy properties of~\(\Corrcat\).

Following Muhly and Solel~\cite{Muhly-Solel:Tensor} and
Katsura~\cite{Katsura:Cstar_correspondences}, many authors have
modified the definition of the Cuntz--Pimsner algebra by requiring the
Cuntz--Pimsner covariance condition only on a suitable ideal in
\(\varphi_{\Hilm}^{-1}(\Comp({\Hilm}))\).  Such modifications are
particularly popular if the left action of~\(A\) on~\(\Hilm\) is not
faithful because in that case, the unmodified Cuntz--Pimsner algebra
may well be zero.  The colimit construction, however, singles out the
unmodified Cuntz--Pimsner algebra.

Unlike the Cuntz--Pimsner condition, ``nondegeneracy'' is not a
relation that we may impose on a bunch of generators.  This is why
there need not be a universal \(\Cst\)\nb-algebra for nondegenerate
representations, but there is always one for Cuntz--Pimsner covariant
representations.  The two properties are only equivalent if~\(\Hilm\)
is proper.  This is the reason why we only understand colimits for
diagrams of proper correspondences.

It seems likely that the colimit of the diagram
\((\N,+)\to\Corrcat\) given by the endomorphism \(\ell^2(\N)\)
of~\(\C\) does not exist (see \cite{Albandik-Meyer:Product}*{Example
  2.7}).  In the following, we therefore restrict attention to
colimits of diagrams of \emph{proper} correspondences.

\section{Category-shaped diagrams and product systems}
\label{sec:category-diagrams}

We have examined enough examples that it makes sense to spell out what
functors, transformations, and modifications \(\Cat\to\Corrcat\) mean
for an arbitrary category~\(\Cat\).  We are particularly interested in
transformations to a constant functor, which lead to the descrption of
the colimit of a diagram.

\subsection{Functors, transformations and modifications}
\label{sec:Corrcat_Cat}

The objects of~\(\Corrcat^{\Cat}\) are functors (or morphisms)
\(\Cat\to\Corrcat\);
arrows are transformations between such functors, and \(2\)\nb-arrows
are modifications.  We describe these things more concretely and then
explain briefly how to compose transformations.  These definitions are
summarised succinctly in~\cite{Leinster:Basic_Bicategories}.  They are
worked out for~\(\Csttwocat^{\Cat}\) in
\cite{Buss-Meyer-Zhu:Higher_twisted}*{\S4}, even for an arbitrary
bicategory~\(\Cat\).  The definitions simplify if~\(\Cat\) is a
category because part of the data does not occur any more.  The
following propositions already contain these simplifications.  We omit
the (rather trivial) proofs.  Readers that do not care much about
bicategory theory could take the following propositions as
definitions.

\begin{proposition}
  \label{pro:functor_category-diagram}
  A functor \(\Cat\to\Corrcat\) consists of
  \begin{itemize}
  \item \(\Cst\)\nb-algebras~\(A_x\) for all objects~\(x\)
    of~\(\Cat\);
  \item correspondences \(\Hilm_g\colon A_x\to A_y\) for all arrows
    \(g\colon x\to y\) in~\(\Cat\);
  \item isomorphisms of correspondences \(\mu_{g,h}\colon
    \Hilm_h\otimes_{A_y} \Hilm_g\to \Hilm_{gh}\) for all pairs of
    composable arrows \(g\colon y\to z\), \(h\colon x\to y\)
    in~\(\Cat\);
  \end{itemize}
  such that
  \begin{enumerate}
  \item \(\Hilm_{1_x}\) is the identity correspondence on~\(A_x\) for
    all objects \(x\) of~\(\Cat\);
  \item \(\mu_{1_y,g}\colon \Hilm_g \otimes_{A_y} A_y \to \Hilm_g\)
    and~\(\mu_{g,1_x}\colon A_x \otimes_{A_x} \Hilm_g \to \Hilm_g\) are
    the canonical isomorphisms for all arrows \(g\colon x\to y\)
    in~\(\Cat\);
  \item for all composable arrows \(g_{01}\colon x_0\to x_1\),
    \(g_{12}\colon x_1\to x_2\), \(g_{23}\colon x_2\to x_3\), the
    following diagram commutes:
    \begin{equation}
      \label{eq:coherence_category-diagram}
      \begin{tikzpicture}[yscale=1,xscale=4,baseline=(current bounding box.west)]
        \node (m-1-1) at (144:1) {\((\Hilm_{g_{01}}\otimes_{A_{x_1}} \Hilm_{g_{12}})
          \otimes_{A_{x_2}} \Hilm_{g_{23}}\)};
        \node (m-1-1b) at (216:1) {\(\Hilm_{g_{01}}\otimes_{A_{x_1}}
          (\Hilm_{g_{12}}\otimes_{A_{x_2}} \Hilm_{g_{23}})\)};
        \node (m-1-2) at (72:1) {\(\Hilm_{g_{02}}\otimes_{A_{x_2}}\Hilm_{g_{23}}\)};
        \node (m-2-1) at (288:1) {\(\Hilm_{g_{01}}\otimes_{A_{x_1}}\Hilm_{g_{13}}\)};
        \node (m-2-2) at (0:.8) {\(\Hilm_{g_{03}}\)};

        \draw[dar] (m-1-1) -- node {\textup{can.}} (m-1-1b);
        \draw[dar] (m-1-1.north) -- node {\(\mu_{g_{12},g_{01}}\otimes_{A_{x_2}}\Id_{\Hilm_{g_{23}}}\)} (m-1-2);
        \draw[dar] (m-1-1b.south) -- node[swap] {\( \Id_{\Hilm_{g_{01}}}\otimes_{A_{x_1}}\mu_{g_{23},g_{12}}\)} (m-2-1);
        \draw[dar] (m-1-2.south) -- node[inner sep=0pt] {\(\mu_{g_{23},g_{02}}\)} (m-2-2);
        \draw[dar] (m-2-1.north) -- node[swap,inner sep=1pt] {\(\mu_{g_{13},g_{01}}\)} (m-2-2);
      \end{tikzpicture}
    \end{equation}
    Here \(g_{02}\defeq g_{12}\circ g_{01}\), \(g_{13}\defeq
    g_{23}\circ g_{12}\), and \(g_{03}\defeq g_{23}\circ g_{12}\circ
    g_{01}\).
  \end{enumerate}
  The diagram~\eqref{eq:coherence_category-diagram} commutes
  automatically if one of the arrows \(g_{01}\), \(g_{12}\)
  or~\(g_{23}\) is an identity arrow.
\end{proposition}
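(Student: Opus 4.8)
The plan is to argue by case analysis on which of the three arrows \(g_{01}\), \(g_{12}\), \(g_{23}\) is an identity, reducing the hexagon~\eqref{eq:coherence_category-diagram} in each case to a coherence identity among the associators and unit isomorphisms of the bicategory~\(\Corrcat\). Conditions~(1) and~(2) do the work: if one arrow is an identity, then the corresponding factor~\(\Hilm_{g_{ij}}\) is the identity correspondence, two of the three composites collapse (for example \(g_{12}\circ 1=g_{12}\)), and the two maps~\(\mu\) that involve the identity arrow become the canonical unit isomorphisms. What is left is to compare the two legs of the degenerated diagram, and I expect this to follow purely from the coherence axioms that~\(\Corrcat\) satisfies as a bicategory, with no computation involving the underlying Hilbert modules.

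Consider first the middle case \(g_{12}=1_{x_1}\). Then \(\Hilm_{g_{12}}\) is the identity correspondence, \(g_{02}=g_{01}\) and \(g_{13}=g_{23}\), while \(\mu_{g_{12},g_{01}}\) and \(\mu_{g_{23},g_{12}}\) become the right and left unitors by condition~(2). Both legs of~\eqref{eq:coherence_category-diagram} then factor through the common isomorphism \(\mu_{g_{23},g_{01}}\), so it suffices to compare the two paths into \(\Hilm_{g_{01}}\otimes\Hilm_{g_{23}}\); these are precisely the two sides of the triangle axiom \(\rho\otimes\Id=(\Id\otimes\lambda)\circ\can\) relating the associator~\(\can\) to the unitors, which holds in~\(\Corrcat\).

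The two outer cases \(g_{23}=1_{x_3}\) and \(g_{01}=1_{x_0}\) are the same in spirit, but now the surviving honest multiplication map \(\mu_{g_{12},g_{01}}\) (respectively \(\mu_{g_{23},g_{12}}\)) sits at the start of one leg and at the end of the other, rather than as a common last factor. Here I would first use naturality of the relevant unitor to slide this map into a common position, after which the remaining diagram is one of the standard unit-coherence identities, namely \(\rho_{X\otimes Y}=(\Id_X\otimes\rho_Y)\circ\can\) or \(\lambda_{X\otimes Y}\circ\can=\lambda_X\otimes\Id_Y\). These identities are consequences of the pentagon and triangle axioms and therefore hold in any bicategory, in particular in~\(\Corrcat\).

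The step I expect to require the most care is the bookkeeping: tracking in each case exactly which of the five edges of the hexagon degenerates to a unitor and which survives as an honest~\(\mu\), and then matching the reduced square to the precise form of the coherence identity. One must in particular respect the convention from~\eqref{eq:compose_corr} that composition tensors the correspondences in the reverse order, so that ``identity on a given arrow'' places the unit object on a definite side of~\(\otimes\) and thereby selects between the left and right unitor. Once these reductions are set up correctly, commutativity is immediate from Mac Lane's coherence theorem for bicategories, so the proof is genuinely formal.
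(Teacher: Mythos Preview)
The paper does not give a proof of this proposition at all: the text just before Proposition~\ref{pro:functor_category-diagram} says ``We omit the (rather trivial) proofs. Readers that do not care much about bicategory theory could take the following propositions as definitions.'' So there is nothing to compare against; the authors regard the whole statement, including the final sentence about automatic commutativity, as a routine unpacking of the definition of a morphism of bicategories.

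Your proposal addresses only that final sentence, which is reasonable since the rest is definitional. Your argument is correct and is exactly the standard one: conditions~(1) and~(2) force the relevant \(\mu\)'s to be unitors, and the degenerated pentagon then becomes one of the three unit coherence diagrams---the triangle axiom itself for \(g_{12}=1\), and the two derived identities \(\rho_{X\otimes Y}=(\Id_X\otimes\rho_Y)\circ\can\) and \(\lambda_{X\otimes Y}\circ\can=\lambda_X\otimes\Id_Y\) for the outer cases. These derived identities are indeed consequences of the pentagon and triangle axioms in any bicategory (this is Kelly's lemma, or one invokes Mac Lane coherence as you do), so the argument goes through. The bookkeeping caveat you flag about the reversed tensor order in~\eqref{eq:compose_corr} is the only place one can slip, and you have identified it correctly.
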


\begin{proposition}
  \label{pro:trafo_category-diagram}
  Let \((A_x^0,\Hilm_g^0,\mu_{g,h}^0)\) and
  \((A_x^1,\Hilm_g^1,\mu_{g,h}^1)\) be two functors from~\(\Cat\)
  to~\(\Corrcat\).  A transformation between them consists of
  \begin{itemize}
  \item correspondences~\(\gamma_x\) from~\(A_x^0\) to \(A_x^1\) for
    all objects~\(x\) of~\(\Cat\);
  \item isomorphisms of correspondences \(V_g\colon
    \gamma_x\otimes_{A^1_x} \Hilm_g^1\to \Hilm_g^0\otimes_{A^0_y}
    \gamma_y\) for all arrows \(g\colon x\to y\) in~\(\Cat\);
  \end{itemize}
  such that
  \begin{enumerate}
  \item \(V_{1_x}\colon \gamma_x\otimes_{A^1_x} A^1_x\to A^0_x
    \otimes_{A^0_x} \gamma_x\) is the canonical isomorphism
    through~\(\gamma_x\) for each object~\(x\) in~\(\Cat\);
  \item for each pair of composable arrows \(g\colon y\to z\),
    \(h\colon x\to y\) in~\(\Cat\), the following diagram commutes:
    \begin{equation}
      \label{eq:trafo_category-diagram}
      \begin{tikzpicture}[yscale=1.5,xscale=2.5,baseline=(current bounding box.west)]
        \node (cbb) at (144:1) {\(\gamma_x\otimes_{A^1_x} \Hilm^1_h\otimes_{A^1_y} \Hilm^1_g\)};
        \node (cb) at (216:1) {\(\gamma_x\otimes_{A^1_x} \Hilm^1_{gh}\)};
        \node (acb) at (72:1) {\(\Hilm^0_h\otimes_{A^0_y} \gamma_y \otimes_{A^1_y} \Hilm^1_g\)};
        \node (ac) at (288:1) {\(\Hilm^0_{gh}\otimes_{A^0_z} \gamma_z\)};
        \node (aac) at (0:.8) {\(\Hilm^0_h\otimes_{A^0_y} \Hilm^0_g\otimes_{A^0_z} \gamma_z\)};
        \draw[dar] (cbb) -- node[swap] {\(\Id_{\gamma_x}\otimes_{A^1_x} u^1_{g,h}\)} (cb);
        \draw[dar] (cbb.north) -- node[near end] {\(V_h\otimes_{A^1_y} \Id_{\Hilm^1_g}\)} (acb);
        \draw[dar] (cb.south) -- node[swap,near end] {\(V_{gh}\)} (ac);
        \draw[dar] (acb.south) -- node[inner sep=0pt] {\(\Id_{\Hilm^0_h}\otimes_{A^0_y} V_g\)} (aac.north);
        \draw[dar] (aac.south) -- node {\(u^0_{g,h} \otimes_{A^0_z} \Id_{\gamma_z}\)} (ac.north);
      \end{tikzpicture}
    \end{equation}
  \end{enumerate}
  The diagram~\eqref{eq:trafo_category-diagram} commutes
  automatically if \(g\) or~\(h\) is an identity arrow.
\end{proposition}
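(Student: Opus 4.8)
The plan is to specialise the general definition of a transformation between morphisms of bicategories, as recorded in~\cites{Benabou:Bicategories, Leinster:Basic_Bicategories} and repeated in~\cite{Buss-Meyer-Zhu:Higher_twisted}*{\S4.2}, to the case where the source~\(\Cat\) is an ordinary category and the target is~\(\Corrcat\). Recall that such a transformation \(\sigma\colon F^0\to F^1\) assigns to each object~\(x\) a \(1\)\nb-arrow \(\sigma_x\colon F^0(x)\to F^1(x)\) and to each \(1\)\nb-arrow \(g\colon x\to y\) a \(2\)\nb-arrow comparing the two composites \(\sigma_y\circ F^0(g)\) and \(F^1(g)\circ\sigma_x\), subject to a unit axiom, a composition axiom, and a naturality axiom with respect to \(2\)\nb-arrows of the source.

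First I would read off the component data in~\(\Corrcat\). A \(1\)\nb-arrow \(A^0_x\to A^1_x\) is exactly a correspondence, so the object-components are correspondences \(\gamma_x\colon A^0_x\to A^1_x\). For \(g\colon x\to y\) the two composites are formed by the bifunctor~\eqref{eq:compose_corr}, which tensors in the reverse order; hence \(\sigma_y\circ F^0(g)\) is \(\Hilm^0_g\otimes_{A^0_y}\gamma_y\) and \(F^1(g)\circ\sigma_x\) is \(\gamma_x\otimes_{A^1_x}\Hilm^1_g\), both correspondences from \(A^0_x\) to~\(A^1_y\). The comparison \(2\)\nb-arrow is therefore an isomorphism of correspondences \(V_g\colon \gamma_x\otimes_{A^1_x}\Hilm^1_g\to \Hilm^0_g\otimes_{A^0_y}\gamma_y\), and its invertibility is automatic because \(\Corrcat(A^0_x,A^1_y)\) is a groupoid. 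This reproduces the stated data.

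Next I would match the coherence axioms. The unit axiom expresses \(\sigma_{1_x}\) in terms of the unitors of source and target and becomes condition~(1), namely that \(V_{1_x}\) is the canonical isomorphism through~\(\gamma_x\). The composition axiom relates, for composable \(g\colon y\to z\) and \(h\colon x\to y\), the \(2\)\nb-arrow \(V_{gh}\) to \(V_g\), \(V_h\) and the multiplication isomorphisms \(\mu^0_{g,h}\) and \(\mu^1_{g,h}\) of~\(F^0\) and~\(F^1\); inserting the definition of composition in~\(\Corrcat\) turns this into the commuting diagram~\eqref{eq:trafo_category-diagram}. The third general axiom is naturality of~\(\sigma\) with respect to \(2\)\nb-arrows of~\(\Cat\); since~\(\Cat\) is an ordinary category it has only identity \(2\)\nb-arrows, so this axiom is vacuous and no further data or conditions remain. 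This vacuity is exactly the simplification announced before the proposition.

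The one genuine bit of bookkeeping---and the main place one could go wrong---is the orientation of the suppressed associators and unitors: one must check that the chosen direction of~\(V_g\) and the convention for the transformation are compatible with the reversed tensoring in~\eqref{eq:compose_corr}, so that~\eqref{eq:trafo_category-diagram} appears with its arrows as drawn. Finally, the automatic commutativity when \(g\) or~\(h\) is an identity follows from condition~(1): substituting the unitor for \(V_{1_x}\) and invoking the triangle coherence of~\(\Corrcat\) collapses~\eqref{eq:trafo_category-diagram} to a commuting diagram, just as for~\eqref{eq:coherence_category-diagram} in Proposition~\ref{pro:functor_category-diagram}.
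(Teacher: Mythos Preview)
Your proposal is correct and follows exactly the approach the paper indicates: specialising the general bicategorical definition of a transformation (as in \cites{Benabou:Bicategories, Leinster:Basic_Bicategories} and \cite{Buss-Meyer-Zhu:Higher_twisted}*{\S4.2}) to the case of a category source and the target~\(\Corrcat\), and observing that the naturality axiom for \(2\)\nb-arrows becomes vacuous. The paper itself omits the proof entirely, stating that it is ``rather trivial'' and that readers may take the proposition as a definition; your write-up is a faithful unpacking of that omitted argument.
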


\begin{proposition}
  \label{pro:modification_category-diagram}
  Let \((A_x^0,\Hilm_g^0,\mu_{g,h}^0)\) and
  \((A_x^1,\Hilm_g^1,\mu_{g,h}^1)\) be functors from~\(\Cat\)
  to~\(\Corrcat\) and let \((\gamma^1_x,V^1_g)\) and
  \((\gamma^2_x,V^2_g)\) be transformations between them.  A
  modification from \((\gamma^1_x,V^1_g)\) to \((\gamma^2_x,V^2_g)\)
  consists of isomorphisms of correspondences \(W_x\colon \gamma^1_x
  \to \gamma^2_x\) for all objects~\(x\) in~\(\Cat\) such that the
  diagrams
  \begin{equation}
    \label{eq:modification_category-diagram}
    \begin{tikzpicture}[yscale=1,xscale=4,baseline=(current bounding box.west)]
      \node (add) at (0,1) {\(\gamma^1_x \otimes_{A^1_x}\Hilm^1_g\)};
      \node (ad) at (1,1) {\(\gamma^2_x\otimes_{A^1_x} \Hilm^1_g\)};
      \node (ddc) at (0,0) {\(\Hilm^0_g\otimes_{A^0_y} \gamma^1_y\)};
      \node (dc) at (1,0) {\(\Hilm^0_g\otimes_{A^0_y} \gamma^2_y\)};

      \draw[dar] (add) -- node {\(W_x\otimes_{A^1_x} \Id_{\Hilm^1_g}\)} (ad);
      \draw[dar] (add) -- node[swap] {\(V^1_g\)} (ddc);
      \draw[dar] (ad) -- node {\(V^2_g\)} (dc);
      \draw[dar] (ddc) -- node[swap] {\(\Id_{\Hilm^0_g}\otimes_{A^0_y} W_y\)} (dc);
    \end{tikzpicture}
  \end{equation}
  commute for all arrows \(g\colon x\to y\) in~\(\Cat\).  This diagram
  commutes automatically if~\(g\) is an identity arrow.
\end{proposition}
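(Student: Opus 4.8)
The plan is to specialise the standard definition of a modification in the functor bicategory \(\Corrcat^{\Cat}\), as recorded in \cite{Leinster:Basic_Bicategories} and \cite{Buss-Meyer-Zhu:Higher_twisted}*{\S4.3}.  In that generality, a modification between transformations \((\gamma^1_x, V^1_g)\) and \((\gamma^2_x, V^2_g)\) assigns to each object~\(x\) of~\(\Cat\) a \(2\)\nb-arrow \(\gamma^1_x \Rightarrow \gamma^2_x\) in~\(\Corrcat\), subject to exactly one compatibility condition for each \(1\)\nb-arrow~\(g\) of~\(\Cat\).  First I would observe that a \(2\)\nb-arrow \(\gamma^1_x \Rightarrow \gamma^2_x\) in~\(\Corrcat\) is by definition an isomorphism of correspondences \(W_x\colon \gamma^1_x \to \gamma^2_x\), which is automatically invertible because \(\Corrcat(A^0_x, A^1_x)\) is a groupoid.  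This identifies the data of a modification with a family \((W_x)_x\) of correspondence isomorphisms.

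Next I would write out the single compatibility condition explicitly using the composition bifunctor~\eqref{eq:compose_corr}.  Horizontal composition of the \(2\)\nb-arrow~\(W_x\) with the identity \(2\)\nb-arrow on~\(\Hilm^1_g\) gives \(W_x\otimes_{A^1_x}\Id_{\Hilm^1_g}\), and on the other side one obtains \(\Id_{\Hilm^0_g}\otimes_{A^0_y} W_y\).  The modification axiom demands the equality \(V^2_g \circ (W_x\otimes_{A^1_x}\Id_{\Hilm^1_g}) = (\Id_{\Hilm^0_g}\otimes_{A^0_y} W_y)\circ V^1_g\).  Reading off the two legs is exactly the assertion that the square~\eqref{eq:modification_category-diagram} commutes for every arrow \(g\colon x\to y\).

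For the final sentence of the statement I would specialise the square to an identity arrow \(g=1_x\).  By Proposition~\ref{pro:functor_category-diagram}(1) both \(\Hilm^0_{1_x}\) and \(\Hilm^1_{1_x}\) are identity correspondences, and by Proposition~\ref{pro:trafo_category-diagram}(1) the maps \(V^1_{1_x}\) and \(V^2_{1_x}\) are the canonical isomorphisms through \(\gamma^1_x\) and~\(\gamma^2_x\).  The square then reduces to the evident compatibility of~\(W_x\) with the unit isomorphisms, which holds by their naturality; hence no condition is imposed for identity arrows.

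The only point requiring care --- and the reason the otherwise routine verification is not quite empty --- is the bookkeeping of the associator and unitor \(2\)\nb-arrows of~\(\Corrcat\), which the paper suppresses from the notation.  In the abstract formulation the modification axiom carries these coherence isomorphisms explicitly, and one must check that under the canonical identifications, justified by the coherence theorem for bicategories, they cancel to leave the clean square~\eqref{eq:modification_category-diagram}.  Granting this, the verification amounts to matching the two legs of the square and has no analytic content.
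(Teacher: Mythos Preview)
Your proposal is correct and matches the paper's approach: the paper explicitly omits the proofs of Propositions~\ref{pro:functor_category-diagram}--\ref{pro:modification_category-diagram} as ``rather trivial,'' noting that they amount to specialising the definitions in \cite{Leinster:Basic_Bicategories} and \cite{Buss-Meyer-Zhu:Higher_twisted}*{\S4} to the case where~\(\Cat\) is a category, and remarks that readers may simply take them as definitions.  Your write-up does precisely this unpacking, including the observation about suppressed coherence isomorphisms, so there is nothing to add.
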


The composition of transformations is defined as follows.  Describe
functors \(\Cat\to \Corrcat\) by \((A_x^0,\Hilm_g^0,\mu_{g,h}^0)\),
\((A_x^1,\Hilm_g^1,\mu_{g,h}^1)\) and
\((A_x^2,\Hilm_g^2,\mu_{g,h}^2)\), and transformations between them by
\((\gamma^{01}_x,V^{01}_g)\) and \((\gamma^{12}_x,V^{12}_g)\) as
above.  The product is given by the correspondences \(\gamma^{02}_x
\defeq \gamma^{01}_x \otimes_{A_x^1} \gamma^{12}_x\) from~\(A_x^0\)
to~\(A_x^2\) for objects~\(x\) of~\(\Cat\) and by the isomorphisms of
correspondences
\begin{multline*}
  V^{02}_g\colon \gamma_x^{02} \otimes_{A_x^2} \Hilm_g^2 =
  \gamma_x^{01} \otimes_{A_x^1} \gamma_x^{12} \otimes_{A_x^2} \Hilm_g^2
  \xrightarrow{\Id_{\gamma_x^{01}} \otimes_{A_x^1} V^{12}_g}
  \gamma_x^{01} \otimes_{A_x^1} \Hilm_g^1 \otimes_{A_y^2} \gamma_y^{12}
  \\\xrightarrow{V^{01}_g\otimes_{A_y^2} \Id_{\gamma_y^{12}}}
  \Hilm_g^0 \otimes_{A_y^0} \gamma_y^{01} \otimes_{A_y^1}  \gamma_y^{12}
  = \Hilm_g^0 \otimes_{A_y^0} \gamma_y^{02}
\end{multline*}
for arrows \(g\colon x\to y\) in~\(\Cat\).  These
\((\gamma^{02}_x,V^{02}_g)\) indeed form a transformation.  General
bicategory theory predicts that this composition turns
\(\Corrcat^{\Cat}\) into a bicategory again, and this is routine to
check by hand.

To understand the above definitions, consider the special case
where~\(\Cat\) has only one object, that is, \(\Cat\) is a monoid.
Then we may drop all indices~\(x\) above: a functor provides a single
\(\Cst\)\nb-algebra~\(A\), a transformation a single
correspondence~\(\gamma\), and a modification a single
isomorphism~\(W\).  Furthermore, all arrows in~\(\Cat\) are
composable, and there is only one identity morphism.  Simplifying the
data in Proposition~\ref{pro:functor_category-diagram} accordingly,
the result is very close to a \emph{product system} in the notation of
Fowler~\cite{Fowler:Product_systems}.

There are only two differences.  First, we require all left actions on
Hilbert modules to be nondegenerate (or ``essential''), whereas Fowler
is careful to avoid this assumption.  Secondly, we multiply in the
opposite order, \(\Hilm_h\otimes_A \Hilm_g\to\Hilm_{gh}\), which
corresponds to the composition of \Star{}homomorphisms.  As a result,
functors \(M\to\Corrcat\) for a monoid~\(M\) are the same as essential
product systems over the opposite monoid~\(M^\op\).

When we pass from monoids to categories, the only change is that we
get more than one \(\Cst\)\nb-algebra: one for each object of the
category.

Nondegeneracy of the left actions on correspondences is necessary for
unit arrows in~\(\Corrcat\) to work as expected: otherwise we would
not get a bicategory.  The order reversal comes in because when we
pass from \Star{}homomorphisms to correspondences, the composition of
\Star{}homomorphisms becomes the reverse-order tensor product.  With
our convention, monoid actions by \Star{}endomorphisms become actions
by correspondences of the same monoid.  The same order-reversal also
appears when translating between actions of a group by correspondences
and saturated Fell bundles over the group.  It is the reason
why~\(g^{-1}\) appears in the correspondence between functors
\(G\to\Corrcat\) and saturated Fell bundles over~\(G\) in the proof of
\cite{Buss-Meyer-Zhu:Higher_twisted}*{Theorem 3.3}.

\subsection{Colimits}
\label{sec:colimits_category-shaped}

Let~\(\Cat\) be a category, let \((A_x,\Hilm_g,\mu_{g,h})\) describe a
functor \(F\colon \Cat\to\Corrcat\) as in
Proposition~\ref{pro:functor_category-diagram}, and let~\(D\) be a
\(\Cst\)\nb-algebra.  We first describe the constant functor
\(\const_D\colon \Cat\to\Corrcat\).  Then we specialise the
description of transformations and modifications to the case of a
constant target.  We use this to describe the colimit of a proper
product system by generators and relations.

\begin{definition}
  \label{def:const_D}
  Let~\(D\) be a \(\Cst\)\nb-algebra.  The \emph{constant functor}
  \(\const_D\colon \Cat\to\Corrcat\) maps all objects~\(x\)
  of~\(\Cat\) to~\(D\), all arrows~\(g\) in~\(\Cat\) to the identity
  correspondence on~\(D\), and all pairs \(g,h\) to the canonical
  isomorphism \(D\otimes_D D\to D\).
\end{definition}

A transformation from the functor given by \((A_x,\Hilm_g,\mu_{g,h})\)
to \(\const_D\) is given by correspondences~\(\gamma_x\)
from~\(A_x\) to~\(D\) for all objects~\(x\) of~\(\Cat\) and
isomorphisms of correspondences
\[
V_g\colon \gamma_x\to\Hilm_g\otimes_{A_y} \gamma_y
\qquad\text{for all arrows }g\colon x\to y\text{ in }\Cat,
\]
such that \(V_{1_x}\) for an object~\(x\) is the canonical isomorphism
and the diagrams
\[
\begin{tikzpicture}[yscale=1.5,xscale=2.5,baseline=(current bounding box.west)]
  \node (cbb) at (144:1) {\(\gamma_x\)};
  \node (cb) at (216:1) {\(\gamma_x\)};
  \node (acb) at (72:1) {\(\Hilm_h\otimes_{A_y} \gamma_y\)};
  \node (ac) at (288:1) {\(\Hilm_{gh}\otimes_{A_z} \gamma_z\)};
  \node (aac) at (0:.8) {\(\Hilm_h\otimes_{A_y} \Hilm_g\otimes_{A_z} \gamma_z\)};
  \draw[dar] (cbb) -- node[swap] {\(\Id_{\gamma_x}\)} (cb);
  \draw[dar] (cbb) -- node[near end] {\(V_h\)} (acb);
  \draw[dar] (cb) -- node[swap,near end] {\(V_{gh}\)} (ac);
  \draw[dar] (acb.south) -- node {\(\Id_{\Hilm_h}\otimes_{A_y} V_g\)} (aac.north);
  \draw[dar] (aac.south) -- node {\(\mu_{g,h} \otimes_{A_z} \Id_{\gamma_z}\)} (ac.north);
\end{tikzpicture}
\]
for composable arrows \(g\colon y\to z\), \(h\colon x\to y\)
in~\(\Cat\) commute.  Here we simplified the data in
Proposition~\ref{pro:trafo_category-diagram} using the canonical
isomorphisms \(\gamma_x\otimes_D D\cong\gamma_x\) for all~\(x\); we
may, of course, drop the identity arrow on~\(\gamma_x\) and redraw
this diagram as a commuting square:
\begin{equation}
  \label{eq:cone_category-diagram}
  \begin{tikzpicture}[yscale=1,xscale=4.5,baseline=(current bounding box.west)]
    \node (cbb) at (0,1) {\(\gamma_x\)};
    \node (acb) at (1,1) {\(\Hilm_h\otimes_{A_y} \gamma_y\)};
    \node (ac) at (0,0) {\(\Hilm_{gh}\otimes_{A_z} \gamma_z\)};
    \node (aac) at (1,0) {\(\Hilm_h\otimes_{A_y} \Hilm_g\otimes_{A_z} \gamma_z\)};

    \draw[dar] (cbb) -- node {\(V_h\)} (acb);
    \draw[dar] (cbb) -- node[swap] {\(V_{gh}\)} (ac);
    \draw[dar] (acb) -- node {\(\Id_{\Hilm_h}\otimes_{A_y} V_g\)} (aac);
    \draw[dar] (aac) -- node {\(\mu_{g,h} \otimes_{A_z} \Id_{\gamma_z}\)} (ac);
  \end{tikzpicture}
\end{equation}
This diagram commutes automatically if \(g\) or~\(h\) is an identity
arrow.

If \((\gamma_x^1,V_g^1)\) and \((\gamma_x^2,V_g^2)\) are two such
transformations, then a modification between them is given by
isomorphisms of correspondences
\[
W_x\colon \gamma_x^1\to \gamma_x^2
\qquad\text{for all objects~\(x\) of~\(\Cat\),}
\]
such that the diagrams
\begin{equation}
  \label{eq:cone_modification_category-diagram}
  \begin{tikzpicture}[yscale=1,xscale=4,baseline=(current bounding box.west)]
    \node (add) at (0,1) {\(\gamma^1_x\)};
    \node (ad) at (1,1) {\(\gamma^2_x\)};
    \node (ddc) at (0,0) {\(\Hilm_g\otimes_{A_y} \gamma^1_y\)};
    \node (dc) at (1,0) {\(\Hilm_g\otimes_{A_y} \gamma^2_y\)};

    \draw[dar] (add) -- node {\(W_x\)} (ad);
    \draw[dar] (add) -- node[swap] {\(V^1_g\)} (ddc);
    \draw[dar] (ad) -- node {\(V^2_g\)} (dc);
    \draw[dar] (ddc) -- node[swap] {\(\Id_{\Hilm_g}\otimes_{A_y} W_y\)} (dc);
  \end{tikzpicture}
\end{equation}
commute for all arrows \(g\colon x\to y\) in~\(\Cat\).  This diagram
commutes automatically if~\(g\) is an identity arrow.

The colimit for a functor \(F\colon \Cat\to\Corrcat\) is, by
definition, a \(\Cst\)\nb-algebra~\(B\) such that, for each
\(\Cst\)\nb-algebra~\(D\), the groupoid of correspondences \(B\to D\)
and isomorphisms of correspondences between them is naturally
equivalent to the groupoid of transformations \(F\to\const_D\) and
modifications between them.

\begin{proposition}
  \label{pro:cone_as_representation}
  There is a bijection between transformations \(F\to\const_D\) and
  the following set of data:
  \begin{itemize}
  \item Hilbert \(D\)\nb-modules~\(\gamma_x\) for objects~\(x\)
    of~\(\Cat\);
  \item nondegenerate \Star{}homomorphisms \(\varphi_x\colon
    A_x\to\Bound(\gamma_x)\) for objects~\(x\) of~\(\Cat\);

  \item linear maps \(S_g\colon \Hilm_g\to \Bound(\gamma_y,\gamma_x)\)
    for arrows \(g\colon x\to y\) in~\(\Cat\);
  \end{itemize}
  such that
  \begin{enumerate}
  \item for each arrow \(g\colon x\to y\), \(S_g\) is
    \(A_x\)-\(A_y\)-linear, compatible with inner products, and
    nondegenerate:
    \begin{enumerate}
    \item \(S_g(a_1 \xi a_2) =
      \varphi_x(a_1) S_g(\xi) \varphi_y(a_2)\) for \(a_1\in A_x\),
      \(a_2\in A_y\);
    \item \(S_g(\xi_1)^*S_g(\xi_2) =
      \varphi_y(\braket{\xi_1}{\xi_2}_{A_y})\) for all
      \(\xi_1,\xi_2\in \Hilm_g\);
    \item the closed linear span of \(S_g(\Hilm_g)\cdot \gamma_y\)
      is~\(\gamma_x\);
    \end{enumerate}

  \item \(S_{1_x}=\varphi_x\colon A_x\to\Bound(\gamma_x)\) for all
    objects~\(x\);
  \item for each pair of composable arrows \(g\colon y\to z\),
    \(h\colon x\to y\) in~\(\Cat\), \(\xi\in\Hilm_g\),
    \(\eta\in\Hilm_h\), we have \(S_h(\eta)S_g(\xi) =
    S_{gh}(\mu_{g,h}(\eta\otimes\xi))\).
  \end{enumerate}

  Let \((\gamma_x^1,\varphi_x^1,S_g^1)\) and
  \((\gamma_x^2,\varphi_x^2,S_g^2)\) be two such collections.
  Modifications between the corresponding transformations are in
  natural bijection with families of unitaries \(W_x\colon
  \gamma_x^1\to\gamma_x^2\) such that \(W_x\varphi_x^1(a) =
  \varphi_x^2(a)W_x\) for all objects~\(x\) and all \(a\in A_x\) and
  \(W_xS_g^1(\xi) = S_g^2(\xi)W_y\) for all arrows \(g\colon x\to
  y\) in~\(\Cat\) and all \(\xi\in\Hilm_g\).
\end{proposition}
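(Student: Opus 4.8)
The plan is to translate the isomorphisms $V_g$ into the operator families $S_g$ one arrow at a time, using the dictionary between correspondence isomorphisms and module representations already employed in Section~\ref{sec:single_endo}, and then to match the two remaining pieces of structure---the coherence square and the unit normalisation---with conditions (3) and (2). First I fix a transformation $(\gamma_x,V_g)$ and let $\varphi_x$ be the left $A_x$\nb-action making $\gamma_x$ a correspondence from $A_x$ to~$D$; this is nondegenerate by the definition of a correspondence. For each arrow $g\colon x\to y$ I set
\[
  S_g(\xi)\eta \defeq V_g^*(\xi\otimes\eta)
  \qquad\text{for }\xi\in\Hilm_g,\ \eta\in\gamma_y .
\]
Since $V_g\colon \gamma_x\congto \Hilm_g\otimes_{A_y}\gamma_y$ is an isomorphism of correspondences, the passage $V_g\mapsto S_g$ is exactly the correspondence-to-representation dictionary of \cite{Albandik-Meyer:Product}*{Proposition 2.3}: condition~(1)(a) is $A_x$\nb-$A_y$\nb-linearity (from $A_x$\nb-linearity of~$V_g$ and the $A_y$\nb-balancing of the tensor product), condition~(1)(b) is the isometry identity, because $V_g$ is unitary and $\braket{\xi_1\otimes\eta_1}{\xi_2\otimes\eta_2}=\braket{\eta_1}{\varphi_y(\braket{\xi_1}{\xi_2})\eta_2}$, and condition~(1)(c) is nondegeneracy of the span of $S_g(\Hilm_g)\gamma_y$, because $V_g^*$ is surjective and $\Hilm_g\otimes_{A_y}\gamma_y$ is the closed span of the elementary tensors. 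Condition~(2) follows from $V_{1_x}$ being the canonical isomorphism $\gamma_x\congto A_x\otimes_{A_x}\gamma_x$, which identifies $S_{1_x}(a)$ with $\varphi_x(a)$.

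The only genuinely new point is condition~(3), which I extract from the coherence square~\eqref{eq:cone_category-diagram}. That square reads $V_{gh}=(\mu_{g,h}\otimes_{A_z}\Id_{\gamma_z})\circ(\Id_{\Hilm_h}\otimes_{A_y}V_g)\circ V_h$. Taking adjoints of these unitaries and using $\mu_{g,h}^*\mu_{g,h}=\Id$ gives, for $\xi\in\Hilm_g$, $\eta\in\Hilm_h$, $\zeta\in\gamma_z$,
\[
  V_{gh}^*\bigl(\mu_{g,h}(\eta\otimes\xi)\otimes\zeta\bigr)
  = V_h^*\bigl(\eta\otimes V_g^*(\xi\otimes\zeta)\bigr),
\]
which is precisely $S_{gh}(\mu_{g,h}(\eta\otimes\xi))\zeta = S_h(\eta)S_g(\xi)\zeta$. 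Conversely, given data $(\gamma_x,\varphi_x,S_g)$ obeying (1)--(3), I define $V_g^*$ on elementary tensors by $V_g^*(\xi\otimes\eta)\defeq S_g(\xi)\eta$; condition~(1)(a) makes this well defined on the balanced tensor product and $A_x$\nb-linear, (1)(b) makes it isometric, (1)(c) makes it surjective, so $V_g$ is an isomorphism of correspondences, (2) makes $V_{1_x}$ canonical, and (3) makes~\eqref{eq:cone_category-diagram} commute by reversing the computation above. These two assignments are manifestly mutually inverse, giving the asserted bijection on objects.

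For modifications, an isomorphism $W_x\colon\gamma_x^1\to\gamma_x^2$ of correspondences from $A_x$ to~$D$ automatically satisfies $W_x\varphi_x^1(a)=\varphi_x^2(a)W_x$. The modification square~\eqref{eq:cone_modification_category-diagram} says $V_g^2\circ W_x=(\Id_{\Hilm_g}\otimes_{A_y}W_y)\circ V_g^1$; taking adjoints and evaluating on $\xi\otimes\eta$ turns this, exactly as above, into $W_x S_g^1(\xi)=S_g^2(\xi)W_y$. This yields the stated bijection between modifications and families $(W_x)_x$, completing the proof.

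The main obstacle is purely bookkeeping: keeping track of the adjoints of the internal-tensor-product unitaries $\Id\otimes V_g$ and $\mu_{g,h}\otimes\Id$, and of the order reversal built into $\mu_{g,h}\colon \Hilm_h\otimes_{A_y}\Hilm_g\to\Hilm_{gh}$, so that the multiplicativity in~(3) comes out with the factors in the order $S_h(\eta)S_g(\xi)$ matching $S_{gh}(\mu_{g,h}(\eta\otimes\xi))$. Everything else is a direct appeal to the single-arrow dictionary of Section~\ref{sec:single_endo}.
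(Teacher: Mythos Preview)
Your argument is correct and follows essentially the same route as the paper: define \(S_g(\xi)\eta\defeq V_g^*(\xi\otimes\eta)\), invoke \cite{Albandik-Meyer:Product}*{Proposition 2.3} for the equivalence with conditions~(1)(a)--(c), and then translate the unit condition and the coherence square~\eqref{eq:cone_category-diagram} into conditions~(2) and~(3) via the displayed identity \(V_{gh}^*(\mu_{g,h}(\eta\otimes\xi)\otimes\zeta)=V_h^*(\eta\otimes V_g^*(\xi\otimes\zeta))\). Your treatment of modifications is likewise the same as the paper's, only spelled out in slightly more detail.
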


\begin{proof}
  Let \((\gamma_x,V_g)\) as in
  Proposition~\ref{pro:trafo_category-diagram} describe a
  transformation from~\(F\) to~\(\const_D\).  The left
  \(A_x\)\nb-module structure on~\(\gamma_x\) is through a
  nondegenerate \Star{}homomorphism \(\varphi_x\colon
  A_x\to\Bound(\gamma_x)\), and when we record this as extra data, we
  may forget the left module structure on~\(\gamma_x\) and view it
  simply as a Hilbert \(D\)\nb-module.  We also replace the unitary
  \(V_g^*\colon \Hilm_g\otimes_{A_y} \gamma_y \to \gamma_x\) by the
  linear map \(S_g\colon \Hilm_g\to \Bound(\gamma_y,\gamma_x)\)
  defined by \(S_g(\xi)(\eta) \defeq V_g^*(\xi\otimes\eta)\).  The
  map~\(S_g\) satisfies (a)--(c) in~(1) and, conversely, maps~\(S_g\)
  with these three properties are in bijection with isomorphisms of
  correspondences~\(V_g^*\); this is proved in
  \cite{Albandik-Meyer:Product}*{Proposition 2.3}.

  To give a transformation, the unitaries~\(V_g\) for arrows~\(g\)
  in~\(\Cat\) must also satisfy the two conditions in
  Proposition~\ref{pro:trafo_category-diagram}.  The first one
  describes~\(V_{1_x}\), and it gives our condition~(2) when we
  translate it into~\(S_{1_x}\).  The second condition in
  Proposition~\ref{pro:trafo_category-diagram} is the commuting
  diagram~\eqref{eq:cone_category-diagram} that relates \(V_g\)
  and~\(V_h\) to~\(V_{hg}\).  This is equivalent to
  \[
  V_h^*(\eta \otimes V_g^*(\xi\otimes\zeta)) =
  V_{gh}^*(\mu_{g,h}(\eta \otimes \xi)\otimes\zeta)
  \]
  for all \(\xi\in\Hilm_g\), \(\eta\in\Hilm_h\), \(\zeta\in\gamma_z\).
  This is, in turn, equivalent to
  \[
  S_h(\eta)S_g(\xi)(\zeta) = S_{gh}(\mu_{g,h}(\eta\otimes\xi))(\zeta),
  \]
  which is condition~(3).  All these steps may be
  reversed.  So a family \((\gamma_x,\varphi_x,S_g)\) with the
  properties (1)--(3) always comes from a unique transformation.

  The last statement holds
  because~\eqref{eq:cone_modification_category-diagram} commutes for
  given~\((W_x)\) if and only if \(W_xS_g^1(\xi)(\zeta) =
  S_g^2(\xi)W_y(\zeta)\) for all \(\zeta\in\gamma^1_y\).
\end{proof}

The nondegeneracy condition (1).(c) in
Proposition~\ref{pro:cone_as_representation} is the only one with an
unusual form, which we cannot impose as a relation on generators of a
universal \(\Cst\)\nb-algebra.  \emph{If each~\(\Hilm_g\) is proper,}
then this condition is equivalent to a Cuntz--Pimsner covariance
condition for each~\(\Hilm_g\); this is slightly more
general than Theorem~\ref{the:CP_as_colimit} because we are dealing
with a correspondence between two different \(\Cst\)\nb-algebras.  All
proofs carry over to this case, however, and we can now write down a
candidate for the colimit using generators and relations:

\begin{definition}
  \label{def:Cuntz-Pimsner_algebra}
  Let \(\CP(A_x,\Hilm_g,\mu_{g,h})\) be the universal
  \(\Cst\)\nb-algebra generated by the \(\Cst\)\nb-algebra
  \(\bigoplus_x A_x\) and symbols \(S_g(\xi)\) for arrows \(g\colon
  x\to y\) in~\(\Cat\) and \(\xi\in\Hilm_g\), subject to the following
  relations:
  \begin{enumerate}
  \item the relations in the \(\Cst\)\nb-algebra \(\bigoplus_x A_x\)
    hold, \(\Hilm_g\ni\xi\mapsto S_g(\xi)\) is linear for each
    arrow~\(g\), and \(S_{1_x}(a)=a\) for all \(a\in A_x\) and
    all~\(x\);

  \item if \(g\colon x\to y\), \(\xi\in\Hilm_g\), \(a\in A_z\), then
    \[
    S_g(\xi)a=
    \begin{cases}
      S_g(\xi a)&z=y,\\
      0&z\neq y,
    \end{cases}\qquad
    a S_g(\xi)=
    \begin{cases}
      S_g(a\xi)&z=x,\\
      0&z\neq x;
    \end{cases}
    \]
  \item if \(g\colon x\to y\), \(\xi_1,\xi_2\in\Hilm_g\), then
    \(S_g(\xi_1)^*S_g(\xi_2)=\langle \xi_1,\xi_2\rangle_{A_y}\in A_y\);
  \item for \(g\colon x\to y\) and \(a\in A_x\) with
    \(\varphi_{\Hilm_g}(a)\in\Comp(\Hilm_g)\) and for
    \(\xi_j,\eta_j\in\Hilm_g\), the norm of \(a-\sum
    S_g(\xi_j)S_g(\eta_j)^*\) is at most the norm of
    \(\varphi_{\Hilm_g}(a)-\sum \ket{\xi_j}\bra{\eta_j}\) in
    \(\Comp(\Hilm_g)\); here \(\varphi_{\Hilm_g}\colon A_x\to
    \Bound(\Hilm_g)\) denotes the left action;
  \item \(S_h(\eta)S_g(\xi) = S_{gh}(\mu_{g,h}(\eta\otimes\xi))\) for
    all \(\xi\in\Hilm_g\), \(\eta\in\Hilm_h\).
  \end{enumerate}
\end{definition}

It is clear that there is a universal \(\Cst\)\nb-algebra satisfying
these relations.  First, take the universal \Star{}algebra~\(U_1\) on
the set of generators.  Secondly, let~\(U_2\) be the quotient
of~\(U_1\) by the ideal generated by the conditions (1)--(3) and~(5).
Thirdly, take the supremum of all \(\Cst\)\nb-seminorms on~\(U_2\)
that satisfy~(4).  This is the maximal \(\Cst\)\nb-seminorm on~\(U_1\)
that satisfies~(4).  The maximum exists because there is a maximal
\(\Cst\)\nb-seminorm on the \(\Cst\)\nb-subalgebra \(\bigoplus
A_x\subseteq U_2\) and \(\norm{S_g(\xi)}=\norm{\xi}\) for any
\(\Cst\)\nb-seminorm on~\(U_2\) by condition~(3).  Finally,
\(\CP(A_x,\Hilm_g,\mu_{g,h})\) is the (Hausdorff) completion of~\(U_2\)
in this \(\Cst\)\nb-seminorm.

\begin{theorem}
  \label{the:colim_Cuntz-Pimsner}
  Let~\(\Cat\) be a category.  Let \((A_x,\Hilm_g,\mu_{g,h})\) give a
  functor \(F\colon \Cat\to\Corrcat\).  Assume that~\(\Hilm_g\) is a
  \emph{proper} correspondence for each arrow \(g\colon x\to y\).
  Then the \(\Cst\)\nb-algebra \(\CP(A_x,\Hilm_g,\mu_{g,h})\) is a
  colimit of~\(F\) in \(\Corrcat\) and in \(\Corrcat_\prop\).
\end{theorem}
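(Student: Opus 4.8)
The plan is to verify directly that $B\defeq\CP(A_x,\Hilm_g,\mu_{g,h})$ satisfies the universal property of Definition~\ref{def:colimit}, by producing, for every $\Cst$\nb-algebra~$D$, a natural equivalence between the groupoid of correspondences $B\to D$ and the groupoid of transformations $F\to\const_D$ and modifications. Proposition~\ref{pro:cone_as_representation} already presents the latter groupoid by families $(\gamma_x,\varphi_x,S_g)$ subject to conditions (1)--(3), and Definition~\ref{def:Cuntz-Pimsner_algebra} presents \Star{}homomorphisms out of~$B$ by generators and relations, so the task is to match these two packets of data. This is exactly the computation in the proof of Theorem~\ref{the:CP_as_colimit}, now carried out for several objects at once; I therefore expect most of the work to be bookkeeping.

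First I would identify a correspondence $B\to D$ with a nondegenerate representation $\pi$ of~$B$ on a Hilbert $D$\nb-module~$\Hilm[F]$. The bridge between the single module~$\Hilm[F]$ and the family $(\gamma_x)$ is the $\Cst$\nb-subalgebra $\bigoplus_x A_x\subseteq B$. I would check first that its inclusion is nondegenerate: using relation~(2) and an approximate unit $(u_\lambda)$ of~$A_x$, one has $u_\lambda S_g(\xi)=S_g(u_\lambda\xi)\to S_g(\xi)$ by nondegeneracy of the left action $A_x\to\Bound(\Hilm_g)$ (and $\norm{S_g(\zeta)}=\norm{\zeta}$ by relation~(3)), so the generators lie in $\overline{(\bigoplus_x A_x)\cdot B}$. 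Hence a nondegenerate representation of~$B$ restricts to a nondegenerate representation of $\bigoplus_x A_x$, and Proposition~\ref{pro:product_coproduct_in_Corr} decomposes $\Hilm[F]=\bigoplus_x\gamma_x$ with $\gamma_x\defeq p_x\Hilm[F]$ for the summand projection~$p_x$. Relation~(2) also gives $p_z S_g(\xi)=\delta_{z,x}S_g(\xi)$ and $S_g(\xi)p_z=\delta_{z,y}S_g(\xi)$ for $g\colon x\to y$, so each $S_g(\xi)$ restricts to an operator in $\Bound(\gamma_y,\gamma_x)$, while $\varphi_x\defeq\pi|_{A_x}$ is a nondegenerate \Star{}homomorphism $A_x\to\Bound(\gamma_x)$.

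With this dictionary, relations (1), (2), (3) and~(5) translate verbatim into conditions (2), (1)(a), (1)(b) and~(3) of Proposition~\ref{pro:cone_as_representation}. The one genuinely non-formal point, and the step I expect to be the main obstacle, is matching the Cuntz--Pimsner covariance relation~(4) with the nondegeneracy condition~(1)(c), that the closed span of $S_g(\Hilm_g)\cdot\gamma_y$ is~$\gamma_x$. This is precisely where properness of each~$\Hilm_g$ enters: by \cite{Albandik-Meyer:Product}*{Proposition 2.5} the two conditions are equivalent for proper correspondences, exactly as in the single-endomorphism case of Theorem~\ref{the:CP_as_colimit}. Conversely, given data $(\gamma_x,\varphi_x,S_g)$ satisfying (1)--(3), I would set $\Hilm[F]\defeq\bigoplus_x\gamma_x$, let $\bigoplus_x\varphi_x$ together with the~$S_g$ define a representation of~$B$ (relations (1)--(5) hold by the same equivalences), and observe it is nondegenerate since $\gamma_x=\overline{\varphi_x(A_x)\gamma_x}$ and $A_x\subseteq B$. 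This yields the bijection on objects.

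For the groupoid structure, an isomorphism of correspondences $B\to D$ is a unitary $\Hilm[F]\to\Hilm[F]'$ intertwining the $B$\nb-action; it intertwines $\bigoplus_x A_x$, hence respects the decomposition and yields unitaries $W_x\colon\gamma_x\to\gamma_x'$ intertwining the~$\varphi_x$ and the~$S_g$, which are exactly the modifications of Proposition~\ref{pro:cone_as_representation}; naturality in~$D$ is immediate. This shows that~$B$ is a colimit in~$\Corrcat$. Finally, for $\Corrcat_\prop$ I would argue as in the last paragraph of the proof of Theorem~\ref{the:CP_as_colimit}: since $(\bigoplus_x A_x)\cdot B=B$, a correspondence $\Hilm[F]\colon B\to D$ is proper if and only if $\varphi_x(A_x)\subseteq\Comp(\gamma_x)$ for every~$x$, that is, if and only if each~$\gamma_x$ is a proper correspondence $A_x\to D$. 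This is precisely the condition for $(\gamma_x,\varphi_x,S_g)$ to be a transformation into~$\Corrcat_\prop$, so the same equivalence of groupoids shows that~$B$ is a colimit in~$\Corrcat_\prop$ as well.
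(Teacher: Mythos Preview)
Your proposal is correct and follows essentially the same approach as the paper: decompose a correspondence $\CP\to D$ along the nondegenerate subalgebra $\bigoplus_x A_x$ into pieces~$\gamma_x$, translate the defining relations of~$\CP$ into the conditions of Proposition~\ref{pro:cone_as_representation}, and use properness (via \cite{Albandik-Meyer:Product}*{Proposition~2.5}) to identify the Cuntz--Pimsner relation~(4) with the nondegeneracy condition~(1)(c). The only cosmetic difference is that the paper first constructs the universal cone $(\gamma^\CP_x,V^\CP_g)$ over~$\CP$ explicitly and then tensors with a given correspondence $\CP\to D$, whereas you go directly; the paper also spells out both directions of the (4)$\Leftrightarrow$(1)(c) argument in situ rather than citing, since the cited proposition is stated for self-correspondences.
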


\begin{proof}
  We abbreviate \(\CP\defeq \CP(A_x,\Hilm_g,\mu_{g,h})\).  Condition~(1)
  in Definition~\ref{def:Cuntz-Pimsner_algebra} gives a
  \Star{}homomorphism \(f\colon \bigoplus A_x\to \CP\) and linear maps
  \(S_g\colon \Hilm_g\to\CP\).  Condition~(2) implies \(A_x
  S_g(\Hilm_g)A_y = S_g(\Hilm_g)\) and hence \(A_y S_g(\Hilm_g)^*A_x =
  S_g(\Hilm_g)^*\).  Since all elements in~\(\CP\) may be approximated
  by noncommutative polynomials in elements of \(S_g(\Hilm_g)\),
  \(S_g(\Hilm_g)^*\) for arrows~\(g\) and \(A_x\) for objects~\(x\),
  this implies that the \Star{}homomorphism \(f\colon \bigoplus A_x\to
  \CP\) is nondegenerate.

  Let \(p_x\in\Mult\bigl(\bigoplus A_y\bigr)\) be the projection
  onto~\(A_x\) and let \(\gamma^\CP_x\defeq f(p_x)\CP\); we view this
  right ideal as a Hilbert module over~\(\CP\).  Let~\(A_x\) act
  on~\(\gamma^\CP_x\) on the left via multiplication through~\(f\).
  This is nondegenerate, so~\(\gamma^\CP_x\) becomes a correspondence
  from~\(A_x\) to~\(\CP\).  We may identify \(f(p_x)\cdot\CP\cdot
  f(p_y)\) with \(\Comp(\gamma^\CP_y,\gamma^\CP_x) \subseteq
  \Bound(\gamma^\CP_y,\gamma^\CP_x)\).

  Condition~(2) in Definition~\ref{def:Cuntz-Pimsner_algebra} implies
  \(S_g(\Hilm_g)\subseteq f(p_x)\cdot\CP\cdot f(p_y)\) for \(g\colon
  x\to y\).  Conditions (2) and~(3) say that \(S_g\colon \Hilm_g\to
  \Comp(\gamma^\CP_y,\gamma^\CP_x)\) is a representation of the
  correspondence~\(\Hilm_g\).  They provide an isometric embedding of
  correspondences \(V_g^\CP\colon \Hilm_g\otimes_{A_y} \gamma^\CP_y
  \to \gamma^\CP_x\) by the proof of
  \cite{Albandik-Meyer:Product}*{Proposition 2.3}.

  Our next goal is to show that this isometry is unitary or,
  equivalently, \(S_g(\Hilm_g)\cdot \gamma^\CP_Y\) spans a dense
  subspace of~\(\gamma^\CP_y\).  This argument is essentially the same
  as for one direction in \cite{Albandik-Meyer:Product}*{Proposition
    2.5}.  It is the place where we need the
  correspondences~\(\Hilm_g\) to be proper, that is,
  \(\varphi_{\Hilm_g}(A_x)\subseteq \Comp(\Hilm_g)\).  Let
  \((u_i)_{i\in I}\) be an approximate unit in~\(A_x\).  For each
  \(i\in I\) and \(\epsilon>0\) there is a finite-rank operator \(T =
  \sum_{n=1}^k \ket{\xi_n}\bra{\eta_n}\) on~\(\Hilm_g\) with
  \(\norm{\varphi_{\Hilm_g}(u_i)-T}<\epsilon\).  Condition~(4) ensures
  that
  \[
  \left\lVert \sum_{n=1}^k S_g(\xi_n)S_g(\eta_n)^* - u_i\right\rVert
  <\epsilon.
  \]
  Thus we may approximate~\(u_i x\) by elements of
  \(S_g(\Hilm_g)S_g(\Hilm_g)^* x\subseteq S_g(\Hilm_g)\gamma^\CP_y\)
  for any \(x\in \CP\).  Since the left action of~\(A_x\)
  on~\(\gamma^\CP_x\) is nondegenerate, this shows that
  \(S_g(\Hilm_g)\gamma^\CP_y\) spans a dense subspace
  of~\(\gamma^\CP_x\), as desired.

  We have verified the critical condition (1).(c) in
  Proposition~\ref{pro:cone_as_representation} for the
  correspondences~\(\gamma^\CP_x\) for \(x\in \Cat^0\) and the maps
  \(S_g\colon \Hilm_g\to \Bound(\gamma^\CP_y,\gamma^\CP_x)\).  The
  remaining conditions are built into our relations very directly.  So
  this data comes from a transformation \((\gamma^\CP_x,V_g)\) from
  our diagram~\(F\) to~\(\const_\CP\).

  Now let~\(\Hilm[F]\) be a correspondence from~\(\CP\) to a
  \(\Cst\)\nb-algebra~\(D\).  Then the correspondences
  \(\Hilm[F]_x\defeq \gamma^\CP_x\otimes_\CP \Hilm[F]\) from~\(A_x\)
  to~\(D\) and the isomorphisms of correspondences \(V_g\otimes_\CP
  \Id_{\Hilm[F]} \colon \Hilm_g \otimes_{A_y} \Hilm[F]_y\to
  \Hilm[F]_x\) form a transformation \(F\to\const_D\).  We
  claim that this construction gives an equivalence
  between the groupoid of correspondences \(\CP\to D\) and the
  groupoid of transformations \(F\to\const_D\).

  Let \((\gamma_x,S_g)\) be the data of a transformation
  to~\(\const_D\) for some \(\Cst\)\nb-algebra~\(D\).  Let
  \(\gamma\defeq \bigoplus_x \gamma_x\) with the canonical
  representation of~\(\bigoplus A_x\), as in
  Proposition~\ref{pro:product_coproduct_in_Corr}.  Also map
  \(S_g(\xi)\in\Bound(\gamma_y,\gamma_x)\) to an operator
  on~\(\gamma\) that vanishes on \(\gamma_z\) for \(z\neq y\).  We
  claim that this defines a \Star{}homomorphism \(\alpha\colon
  \CP\to\Bound(\gamma)\), which is nondegenerate because already its
  restriction to~\(\bigoplus A_x\) is nondegenerate.  We want to use
  the universal property of~\(\CP\), of course.  All conditions except
  the fourth one are evident.  To check that one, we copy the other
  half of the proof of \cite{Albandik-Meyer:Product}*{Proposition
    2.5}.

  Let \(g\colon x\to y\) be an arrow, let \(a\in A_x\),
  \(\xi_i,\eta_i\in \Hilm_g\), and let \(C>0\) be strictly bigger than
  the norm of \(\varphi_{\Hilm_g}(a) - \sum \ket{\xi_i}\bra{\eta_i}\).
  It is convenient to use that the map \(\ket{\xi}\bra{\eta}\mapsto
  S_g(\xi)S_g(\eta)^*\) induces a \Star{}homomorphism
  \(\vartheta_g\colon \Comp(\Hilm_g)\to \Bound(\gamma_x)\).  This is
  nondegenerate because Proposition~\ref{pro:cone_as_representation}
  gives \(\Comp(\Hilm_g)\gamma_x = S_g(\Hilm_g) S_g(\Hilm_g)^*
  \gamma_x \supseteq S_g(\Hilm_g) \gamma_y \supseteq \gamma_x\).

  Since \(aS_g(\zeta) = S_g(\varphi_{\Hilm_g}(a)\zeta)\) for all
  \(a\in A_x\), we get \(a \zeta = \vartheta_g(\varphi_{\Hilm_g}(a))
  \zeta\) for all \(a\in A_x\), \(\zeta\in \Comp(\Hilm_g)D = D\).
  Thus the direct action of~\(A_x\) is equal to \(\vartheta_g\circ
  \varphi_{\Hilm_g}(a)\).  This easily implies the norm estimate~(4)
  in Definition~\ref{def:Cuntz-Pimsner_algebra}.  Hence we get the
  desired nondegenerate \Star{}homomorphism \(\CP\to\Bound(\gamma)\),
  so~\(\gamma\) becomes a correspondence from~\(\CP\) to~\(D\).  By
  construction, the transformation \((\gamma_x^\CP\otimes_\CP \gamma,
  V_g^\CP\otimes_\CP \gamma)\) associated to this
  correspondence~\(\gamma\) is the transformation given by the
  original data~\((\gamma_x,S_g)\).

  Let \((\gamma_x^1,S_g^1)\) and \((\gamma_x^2,S_g^2)\) be
  transformations \(F\to\const_D\).  Form the associated
  correspondences \(\gamma^1\) and~\(\gamma^2\) from~\(\CP\) to~\(D\).
  A family of isomorphisms of correspondences \(W_x\colon
  \gamma_x^1\to\gamma_x^2\) gives a unitary operator \(\bigoplus
  W_x\colon \gamma^1\to\gamma^2\) that intertwines the left actions of
  \(\bigoplus A_x\subseteq\CP\).  Conversely, any such operator
  \(\gamma^1\to\gamma^2\) commutes with the projections~\(f(p_x)\) and
  therefore decomposes as \(\bigoplus W_x\) for isomorphisms of
  correspondences \(W_x\colon \gamma^1_x\to\gamma^2_x\).  The
  operators~\(W_x\) form a modification if and only if they also
  intertwine the actions of \(S_g^1(\xi)\) and \(S_g^2(\xi)\) for all
  \(\xi\in\Hilm_g\) and all arrows~\(g\) in~\(\Cat\).  Since these
  elements together with \(\bigoplus A_x\) generate~\(\CP\), this is
  equivalent to intertwining the representations of~\(\CP\).  Thus
  modifications between functors \(F\to\const_D\) are in bijection
  with isomorphisms of the associated correspondences \(\CP\to D\).
  Hence we have an equivalence of groupoids
  \(\Corrcat^{\Cat}(F,\const_D) \cong \Corrcat(\CP,D)\).

  If the correspondences~\(\gamma_x\) are proper, then \(\bigoplus
  A_x\to\Comp(\gamma)\) and hence \(\CP\to\Comp(\gamma)\) because
  \(\bigoplus A_x\to\CP\) is nondegenerate.  Thus we get a proper
  correspondence from~\(\CP\) to~\(D\).  The converse also holds
  because the correspondences~\(\gamma_x^\CP\) are proper.  Hence
  \(\Corrcat_\prop^{\Cat}(F,\const_D) \cong \Corrcat_\prop(\CP,D)\)
  as well, that is, \(\CP\) is also a colimit in the
  subcategory~\(\Corrcat_\prop\).
\end{proof}

Let us return to the notationally easier case where~\(\Cat\) has only
one object, that is, \(\Cat\) is a monoid~\(P\).  By
Proposition~\ref{pro:functor_category-diagram}, a functor
\(P\to\Corrcat\) is the same as an essential product system over the
opposite monoid~\(P^\op\).

\begin{theorem}
  \label{the:colimit_monoid}
  Let~\(P\) be a monoid.
  View a proper, essential product system over~\(P^\op\) as a functor
  \(P\to\Corrcat_\prop\).  The Cuntz--Pimsner algebra of the product
  system is the colimit of this functor \(P\to\Corrcat_\prop\) both in
  \(\Corrcat_\prop\) and in~\(\Corrcat\).
\end{theorem}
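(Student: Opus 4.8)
The plan is to deduce this from the general category case already proved in Theorem~\ref{the:colim_Cuntz-Pimsner}. A monoid~\(P\) is exactly a category~\(\Cat\) with a single object, so a functor \(P\to\Corrcat_\prop\) is described by the data \((A_x,\Hilm_g,\mu_{g,h})\) of Proposition~\ref{pro:functor_category-diagram} with the set of objects reduced to a point. Dropping the now-redundant object label~\(x\), this data is a single \(\Cst\)\nb-algebra~\(A\), a family of proper correspondences \(\Hilm_g\colon A\to A\) for \(g\in P\), and multiplication isomorphisms \(\mu_{g,h}\colon \Hilm_h\otimes_A\Hilm_g\to\Hilm_{gh}\); by the discussion in~\S\ref{sec:Corrcat_Cat} this is precisely an essential product system over the opposite monoid~\(P^\op\), and properness of the functor means that each~\(\Hilm_g\) acts by compact operators.

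First I would invoke Theorem~\ref{the:colim_Cuntz-Pimsner} verbatim: since every~\(\Hilm_g\) is proper, the universal \(\Cst\)\nb-algebra \(\CP(A,\Hilm_g,\mu_{g,h})\) of Definition~\ref{def:Cuntz-Pimsner_algebra} is a colimit of the functor both in~\(\Corrcat\) and in~\(\Corrcat_\prop\). Thus the entire analytic content of the theorem---the equivalence of groupoids \(\Corrcat^{P}(F,\const_D)\cong\Corrcat(\CP,D)\)---is already available, and nothing further has to be checked about the universal property itself.

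The only remaining task is to identify \(\CP(A,\Hilm_g,\mu_{g,h})\) with the Cuntz--Pimsner algebra of the product system in the sense of~\cite{Albandik-Meyer:Product}. Specialising Definition~\ref{def:Cuntz-Pimsner_algebra} to a single object, relation~(2) loses its vanishing cases, and relations (1)--(3) and~(5) say exactly that \(\Hilm_g\ni\xi\mapsto S_g(\xi)\) is a representation of the product system in the sense of Fowler~\cite{Fowler:Product_systems} that restricts to the inclusion \(A\hookrightarrow\CP\) on the unit fibre. Relation~(4), meanwhile, is precisely the Cuntz--Pimsner covariance condition imposed fibrewise on each proper correspondence~\(\Hilm_g\). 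Hence \(\CP(A,\Hilm_g,\mu_{g,h})\) is the universal \(\Cst\)\nb-algebra for Cuntz--Pimsner covariant representations of the product system, that is, its absolute Cuntz--Pimsner algebra. Combining this identification with Theorem~\ref{the:colim_Cuntz-Pimsner} finishes the proof.

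I expect the single genuinely substantive point to be this last identification, which hinges on the fibrewise equivalence between the nondegeneracy condition (1).(c) of Proposition~\ref{pro:cone_as_representation} and Cuntz--Pimsner covariance for proper correspondences---the content of \cite{Albandik-Meyer:Product}*{Proposition 2.5}. I would also take care with the order-reversal convention, verifying that the multiplicativity relation~(5), \(S_h(\eta)S_g(\xi)=S_{gh}(\mu_{g,h}(\eta\otimes\xi))\), matches the product-system multiplication over~\(P^\op\) rather than over~\(P\); everything else is bookkeeping already carried out in the general categorical theorem.
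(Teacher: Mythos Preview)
Your proposal is correct and follows essentially the same approach as the paper: invoke Theorem~\ref{the:colim_Cuntz-Pimsner} for the one-object category~\(P\), then observe that the defining relations of \(\CP(A,\Hilm_g,\mu_{g,h})\) in Definition~\ref{def:Cuntz-Pimsner_algebra} are exactly those of a Cuntz--Pimsner covariant representation of the product system. The paper's proof compresses this into two sentences, simply asserting that the colimit is given by Theorem~\ref{the:colim_Cuntz-Pimsner} and that ``by construction'' it is universal for Cuntz--Pimsner covariant representations; your more explicit unpacking of relations (1)--(5) and the order-reversal convention is accurate and arguably clearer, but not a different argument.
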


\begin{proof}
  The colimit is given by Theorem~\ref{the:colim_Cuntz-Pimsner} and
  Definition~\ref{def:Cuntz-Pimsner_algebra}.  By construction, it
  is also universal for Cuntz--Pimsner covariant representations of
  the product system.
\end{proof}

\subsection{Colimits over bicategories}
\label{sec:colim_over_bicategory}

If~\(\Cat\) is a category, then diagrams \(\Cat\to\Corrcat_\prop\)
have a colimit by Theorem~\ref{the:colim_Cuntz-Pimsner}.  We are
going to extend this to the case where~\(\Cat\) is only a
bicategory.  The bicategory~\(\Corrcat^{\Cat}\) for a general
bicategory~\(\Cat\) is described, among others, in
\cites{Benabou:Bicategories, Leinster:Basic_Bicategories,
  Buss-Meyer-Zhu:Higher_twisted}.  For the target
bicategory~\(\Corrcat\), there are no serious simplifications
compared to the case of an arbitrary target bicategory; we will,
however, often disregard associators in the following arguments
because they are fairly trivial in~\(\Corrcat\).  For simplicitly,
we first assume that~\(\Cat\) is a strict \(2\)\nb-category.  Any
bicategory is equivalent to a strict one
(see~\cite{Leinster:Basic_Bicategories}), so this is no serious
restriction.

If~\(\Cat\) is a strict \(2\)\nb-category, its arrows and objects
form a category~\(\Cat_1\), and a functor \(F\colon
\Cat\to\Corrcat\) contains a functor \(\Cat_1\to\Corrcat\); the
latter is given by \(\Cst\)\nb-algebras~\(A_x\) for objects~\(x\)
of~\(\Cat\), correspondences~\(\Hilm_g\) from~\(A_x\) to~\(A_y\) for
arrows \(g\colon x\to y\) in~\(\Cat\), isomorphisms of
correspondences \(\mu_{g,h}\colon
\Hilm_h\otimes_{A_y}\Hilm_g\to\Hilm_{gh}\) for composable arrows
\(g\colon y\to z\) and \(h\colon x\to y\), subject to the conditions
in Proposition~\ref{pro:functor_category-diagram}.  In addition, a
functor \(F\colon \Cat\to\Corrcat\) also provides isomorphisms of
correspondences \(v_a\colon \Hilm_g\to\Hilm_h\) for \(2\)\nb-arrows
\(a\colon g\Rightarrow h\), which are compatible with horizontal and
vertical composition.  We refer
to~\cite{Buss-Meyer-Zhu:Higher_twisted}*{\S4.1} for the details,
which play no role in the following.

Describe two functors \(F_i\colon \Cat\to\Corrcat\) for \(i=0,1\) by
the data \((A_x^i,\Hilm_g^i,\mu_{g,h}^i,v_a)\) as above.  A
transformation \(\Phi\colon F_0\to F_1\) between them restricts to a
transformation between their restrictions to~\(\Cat_1\) and thus
provides correspondences \(\gamma_x\colon A^0_x\to A^1_x\) and
isomorphisms of correspondences \(V_g\colon \gamma_x \otimes_{A^1_x}
\Hilm_g^1 \to \Hilm_g^0 \otimes_{A^0_y} \gamma_y\) for arrows
\(g\colon x\to y\) in~\(\Cat\), subject to the conditions in
Proposition~\ref{pro:trafo_category-diagram}.  To be a transformation
on the level of~\(\Cat\), we need no extra data, but extra conditions:
the diagrams
\begin{equation}
  \label{eq:transformation_bicategory}
  \begin{tikzpicture}[yscale=1,xscale=4,baseline=(current bounding box.west)]
    \node (add) at (0,1) {\(\gamma_x \otimes_{A^1_x} \Hilm_g^1\)};
    \node (ad) at (1,1) {\(\gamma_x\otimes_{A^1_x} \Hilm_h^1\)};
    \node (ddc) at (0,0) {\(\Hilm_g^0\otimes_{A^0_y} \gamma_y\)};
    \node (dc) at (1,0) {\(\Hilm_h^0\otimes_{A^0_y} \gamma_y\)};

    \draw[dar] (add) -- node {\(\Id_{\gamma_x}\otimes_{A^1_x} v_a^1\)} (ad);
    \draw[dar] (add) -- node[swap] {\(V_g\)} (ddc);
    \draw[dar] (ad) -- node {\(V_h\)} (dc);
    \draw[dar] (ddc) -- node[swap] {\(v_a^0\otimes_{A^0_y} \Id_{\gamma_y}\)} (dc);
  \end{tikzpicture}
\end{equation}
must commute for all \(2\)\nb-arrows \(a\colon g\Rightarrow h\)
in~\(\Cat\), for
parallel arrows \(g,h\colon x\rightrightarrows y\).  This diagram
commutes automatically if~\(a\) is an identity \(2\)\nb-arrow.

A modification between two transformations \(\Phi_1,\Phi_2\colon
F_0\to F_1\) is defined \emph{exactly} as in
Proposition~\ref{pro:modification_category-diagram}; there is no extra
data and no extra condition to be a modification on the level
of~\(\Cat\).

\begin{definition}
  \label{def:CP_over_bicategory}
  Let \((A_x,\Hilm_g,\mu_{g,h},v_a)\) describe a functor from the
  \(2\)\nb-category~\(\Cat\) to~\(\Corrcat\).  The
  \emph{Cuntz--Pimsner algebra} \(\CP(A_x,\Hilm_g,\mu_{g,h},v_a)\) is
  defined as the quotient of \(\CP(A_x,\Hilm_g,\mu_{g,h})\) (see
  Definition~\ref{def:Cuntz-Pimsner_algebra}) by the relations
  \(S_h(v_a(\xi))= S_g(\xi)\) for all \(2\)\nb-arrows \(a\colon
  g\Rightarrow h\) and all \(\xi\in\Hilm_g\).
\end{definition}

\begin{theorem}
  \label{the:colim_Cuntz-Pimsner_2}
  Let~\(\Cat\) be a \textup{(}strict\textup{)} \(2\)\nb-category and
  let \((A_x,\Hilm_g,\mu_{g,h},v_a)\) give a functor \(F\colon
  \Cat\to\Corrcat_\prop\).  The \(\Cst\)\nb-algebra
  \(\CP(A_x,\Hilm_g,\mu_{g,h},v_a)\) is a colimit of~\(F\) both in
  \(\Corrcat\) and in \(\Corrcat_\prop\).
\end{theorem}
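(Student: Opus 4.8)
The plan is to bootstrap from Theorem~\ref{the:colim_Cuntz-Pimsner}: the restriction of~\(F\) to the underlying category~\(\Cat_1\) is a functor \(\Cat_1\to\Corrcat_\prop\) whose colimit is \(\CP\defeq\CP(A_x,\Hilm_g,\mu_{g,h})\). The only new ingredient at the level of the \(2\)\nb-category is the isomorphisms~\(v_a\) attached to \(2\)\nb-arrows \(a\colon g\Rightarrow h\); on the transformation side these impose the extra commuting squares~\eqref{eq:transformation_bicategory}, and on the algebraic side they cut~\(\CP\) down to the quotient \(\CP_v\defeq\CP(A_x,\Hilm_g,\mu_{g,h},v_a)\) of Definition~\ref{def:CP_over_bicategory}. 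So I only have to match these two extra pieces of data; everything else is inherited verbatim from Theorem~\ref{the:colim_Cuntz-Pimsner}.

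First I would make the matching explicit. For the constant functor~\(\const_D\) all the comparison isomorphisms~\(v_a^1\) are identities on the identity correspondence on~\(D\), so the square~\eqref{eq:transformation_bicategory} for a transformation \(F\to\const_D\) collapses to \(V_h=(v_a\otimes_{A_y}\Id_{\gamma_y})\circ V_g\) for each \(2\)\nb-arrow \(a\colon g\Rightarrow h\), where \(V_g\colon\gamma_x\to\Hilm_g\otimes_{A_y}\gamma_y\) are the simplified cone isomorphisms. Passing to adjoints and using the dictionary \(S_g(\xi)(\eta)=V_g^*(\xi\otimes\eta)\) of Proposition~\ref{pro:cone_as_representation}, this condition reads \(S_h(v_a\xi)=S_g(\xi)\) for all \(\xi\in\Hilm_g\), since~\(v_a\) is unitary. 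This is \emph{precisely} the relation by which~\(\CP_v\) is defined as a quotient of~\(\CP\).

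With this identification in hand, I would refine the groupoid equivalence \(\Corrcat(\CP,D)\cong\Corrcat^{\Cat_1}(F|_{\Cat_1},\const_D)\) of Theorem~\ref{the:colim_Cuntz-Pimsner}. A correspondence \(\CP\to D\) factors through the quotient map \(\CP\twoheadrightarrow\CP_v\) exactly when its defining representation kills all the elements \(S_h(v_a\xi)-S_g(\xi)\); under the equivalence these are exactly the operators whose vanishing says that the associated transformation satisfies the extra squares~\eqref{eq:transformation_bicategory}, that is, is a transformation \(F\to\const_D\) on the level of the \(2\)\nb-category. Since \(\bigoplus A_x\to\CP\) is nondegenerate and \(\CP\twoheadrightarrow\CP_v\) is surjective, the composite \(\bigoplus A_x\to\CP_v\) is again nondegenerate, so correspondences \(\CP_v\to D\) are precisely the correspondences \(\CP\to D\) that factor through the quotient. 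Modifications require no separate discussion: at the \(2\)\nb-category level they are defined exactly as in Proposition~\ref{pro:modification_category-diagram}, and on the algebraic side an intertwiner for the \(\CP\)\nb-representations is the same as one for the \(\CP_v\)\nb-representations because the quotient map is surjective. Hence the equivalence of Theorem~\ref{the:colim_Cuntz-Pimsner} restricts to an equivalence of groupoids \(\Corrcat^{\Cat}(F,\const_D)\cong\Corrcat(\CP_v,D)\), which is the colimit property in~\(\Corrcat\).

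Finally, the properness statement is inherited without change: a correspondence \(\CP_v\to D\) is proper if and only if each~\(\gamma_x\) is proper, because \(\bigoplus A_x\) maps nondegenerately into~\(\CP_v\) and, together with the generators \(S_g(\Hilm_g)\), generates it. This gives the equivalence \(\Corrcat_\prop^{\Cat}(F,\const_D)\cong\Corrcat_\prop(\CP_v,D)\) as in the proof of Theorem~\ref{the:colim_Cuntz-Pimsner}, so~\(\CP_v\) is also a colimit in~\(\Corrcat_\prop\). The one step that genuinely needs care — and the main obstacle — is the adjoint computation turning the diagram~\eqref{eq:transformation_bicategory} into the single operator identity \(S_h(v_a\xi)=S_g(\xi)\); once that is checked, the quotient relations of Definition~\ref{def:CP_over_bicategory} are forced to be exactly the right ones and the remainder is bookkeeping.
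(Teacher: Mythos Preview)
Your proposal is correct and follows essentially the same approach as the paper: restrict~\(F\) to the underlying category~\(\Cat_1\), invoke Theorem~\ref{the:colim_Cuntz-Pimsner} for~\(\CP(A_x,\Hilm_g,\mu_{g,h})\), and then observe that the extra commuting squares~\eqref{eq:transformation_bicategory} correspond exactly to the quotient relations \(S_h(v_a\xi)=S_g(\xi)\) defining~\(\CP_v\), while modifications need no new treatment. The paper's proof records only this outline; you have simply filled in the adjoint computation and the nondegeneracy and properness checks that the paper leaves implicit.
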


\begin{proof}
  Let \(F_1\colon \Cat_1\to\Corrcat_\prop\) denote the restriction of
  a diagram to the arrows and objects in~\(\Cat\).  A transformation
  \(F\to\const_D\) is also a transformation \(F_1\to\const_D\), and
  the modifications are the same in both cases.  Hence the universal
  \(\Cst\)\nb-algebra for transformations \(F\to\const_D\) is a
  quotient of the one for transformations \(F_1\to\const_D\).  The
  extra relations that we need to divide out are exactly the relations
  \(S_h(v_a(\xi))= S_g(\xi)\) for all \(2\)\nb-arrows \(a\colon
  g\Rightarrow h\) and all \(\xi\in\Hilm_g\): this is exactly what is
  needed to make the diagrams~\eqref{eq:transformation_bicategory}
  commute.
\end{proof}

If~\(\Cat\) is only a bicategory, then functors \(\Cat\to\Corrcat\)
look the same as above, except that now the ``category''~\(\Cat_1\)
is only associative and unital up to certain \(2\)\nb-arrows, which
form part of the data.  The definitions of transformations and
modifications, however, do not contain the associators and unit
transformations.  So the proof of
Theorem~\ref{the:colim_Cuntz-Pimsner} extends to non-associative
``categories,'' and Theorem~\ref{the:colim_Cuntz-Pimsner_2} extends
literally to bicategories.

\section{Inductive limits}
\label{sec:inductive}

Let~\(\Cat\) be the partially ordered set~\((\N,\le)\) viewed as a
category, that is, with a unique arrow \(m\to n\) if \(m\le n\) and
no arrow otherwise.  Diagrams indexed by~\(\Cat\) are called
\emph{inductive systems}, and their colimits \emph{inductive
  limits}.
Such a diagram in~\(\Corrcat\) is given by
\(\Cst\)\nb-algebras~\(A_n\), correspondences \(\Hilm_m^n\colon
A_m\to A_n\) for \(m\le n\), and isomorphisms of correspondences
\(\mu_{m,n,k}\colon \Hilm_m^n\otimes_{A_n} \Hilm_n^k \cong \Hilm_m^k\)
for all \(m\le n\le k\), subject to the following conditions.
First, \(\Hilm_n^n\cong A_n\) and~\(\mu_{m,n,k}\) has to be the
canonical isomorphism if \(m=n\) or \(n=k\).  Secondly, the
maps~\(\mu_{m,n,k}\) are ``associative'' (view them as multiplication
maps).

We may simplify this data, up to isomorphism of diagrams: It is
enough to specify \(\Cst\)\nb-algebras \(A_n\) and
correspondences~\(\Hilm_n^{n+1}\) for \(n\in\N\), with no
constraints on the~\(\Hilm_n^{n+1}\).  We may extend this to a
diagram as above by taking
\[
\Hilm_m^n \cong \Hilm_m^{m+1}\otimes_{A_{m+1}} \Hilm_{m+1}^{m+2}
\otimes_{A_{m+2}} \dotsb \otimes_{A_{n-1}} \Hilm_{n-1}^{n}
\]
for \(m\le n\) (the empty tensor product is interpreted as~\(A_n\)
for \(m=n\)) and letting~\(\mu_{m,n,k}\) be the canonical
isomorphisms.  Conversely, any diagram is isomorphic to one of this
form.

Let \((A_n,\Hilm_n^m,\mu_{n,m,k})\) and
\((B_n,\Hilm[F]_n^m,v_{n,m,k})\) be such diagrams.  We may also
simplify transformations between them.  By definition, a
transformation is given by correspondences \(\Hilm[G]_n\colon A_n
\to B_n\) and isomorphisms of correspondences
\[
w_{m,n}\colon \Hilm_m^n\otimes_{A_n} \Hilm[G]_n\cong
\Hilm[G]_m\otimes_{B_m} \Hilm[F]_m^n
\qquad\text{ for all }m\le n,
\]
subject to compatibility conditions with \(\mu_{m,n,k}\)
and~\(v_{m,n,k}\) for all \(m\le n\le k\), and the condition
that~\(w_{n,n}\) be the canonical isomorphism.  It suffices to
specify the isomorphisms~\(w_{n,n+1}\) for \(n\in\N\), without any
conditions.

Finally, a modification between two such transformations,
\((\Hilm[G]_n,w_{n,n+1})\) and \((\Hilm[G]'_n,w'_{n,n+1})\), is
given by isomorphisms of correspondences \(x_n\colon \Hilm[G]_n\to
\Hilm[G]'_n\) such that \(w_{m,n}\circ (\Id_{\Hilm_m^n}\otimes_{A_n}
x_n) = (x_n\otimes_{B_m} \Id_{\Hilm[F]_m^n})\circ w_{m,n}\) for all
\(m\le n\); but these conditions hold for all \(m\le n\) once they
hold for all \(m\in\N\) and \(n=m+1\).

The simplifications above say that the bicategory of functors
\(\Cat\to\Corrcat\) is equivalent to the bicategory of simplified
functors with simplified transformations and modifications.  In
particular, for colimits it makes no difference whether we work
with full or simplified diagrams.

Our general existence theorem shows that any inductive system of
proper correspondences has a colimit in~\(\Corrcat\).  We claim that
for an inductive system of \Star{}homomorphisms in the usual sense,
this colimit is the same as the usual inductive limit in the
category of \(\Cst\)\nb-algebras.  Thus we consider a diagram
\begin{equation}
  \label{eq:inductive_limit_diagram}
  A_0\xrightarrow{\varphi_0}A_1\xrightarrow{\varphi_1}A_2\xrightarrow{\varphi_2}
  \cdots\xrightarrow{\varphi_{n-1}}A_n\xrightarrow{\varphi_{n}}\cdots,
\end{equation}
where the~\(A_n\) are \(\Cst\)\nb-algebras and the~\(\varphi_n\) are
\(^*\)\nb-homomorphisms.  Let~\(A_\infty\) be the inductive limit
\(\Cst\)\nb-algebra of this diagram in the usual sense, and let
\(\varphi^\infty_n\colon A_n \to A_\infty\) be the canonical
\(^*\)\nb-homomorphisms.

\begin{proposition}
  \label{pro:inductive_limit_colimit}
  The \(\Cst\)\nb-algebra~\(A_\infty\) with the
  maps~\(\varphi^\infty_n\) is also a colimit
  of~\eqref{eq:inductive_limit_diagram} in \(\Corrcat_\prop\)
  and~\(\Corrcat\).
\end{proposition}

\begin{proof}
  Let~\(D\) be a \(\Cst\)\nb-algebra and let \(\Hilm[F]_\infty\colon
  A_\infty\to D\) be a correspondence.  For \(n\in \N\), we define a
  correspondence \(\Hilm[F]_n \defeq A_n\otimes_{\varphi_n^\infty}
  \Hilm[F]_\infty\colon A_n\to D\).  These correspondences together with
  the canonical isomorphisms
  \[
  A_n\otimes_{\varphi_n}\Hilm[F]_{n+1}
  \cong A_n\otimes_{\varphi_n} A_{n+1}
  \otimes_{\varphi_{n+1}^\infty}\Hilm[F]_\infty
  \cong A_n \otimes_{\varphi_{n+1}^\infty\circ \varphi_n} \Hilm[F]_\infty
  \cong \Hilm[F]_n
  \]
  give a transformation from~\eqref{eq:inductive_limit_diagram}
  to~\(\const_D\).  An isomorphism of correspondences
  \(\Hilm[F]_\infty\to\Hilm[F]'_\infty\) induces a modification between
  these associated transformations, so we get a functor from the
  groupoid of correspondences \(A_\infty\to D\) to the groupoid of
  transformations in~\(\Corrcat\) from the
  diagram~\eqref{eq:inductive_limit_diagram} to the constant diagram
  on~\(D\).  We claim that this functor is an equivalence of
  groupoids.

  Let the correspondences \(\Hilm[F]_n\colon A_n\to D\) and the
  isomorphisms of correspondences \(\mu_n\colon A_n\otimes_{\varphi_n}
  \Hilm[F]_{n+1} \to \Hilm[F]_n\) form a transformation
  from~\eqref{eq:inductive_limit_diagram} to the constant diagram
  on~\(D\).  We are going to construct a correspondence
  \(\Hilm[F]_\infty\colon A_\infty\to D\).

  If \(a\in \ker \varphi_n\subseteq A_n\), then \(a\otimes_{\varphi_n}
  \xi=0\) for all \(\xi \in \Hilm[F]_{n+1}\) and hence
  \(ab\otimes_{\varphi_n} \xi = a\otimes_{\varphi_n} b\xi = 0\) for
  all \(b\in A_n\), \(\xi \in \Hilm[F]_{n+1}\).  Since
  \(A_n\otimes_{\varphi_n} \Hilm[F]_{n+1} \cong \Hilm[F]_n\), \(\ker
  \varphi_n\) acts trivially on~\(\Hilm[F]_n\).  Similarly, the kernel of
  \(\varphi_n^{n+m}\colon A_n\to A_{n+m}\) acts trivially
  on~\(\Hilm[F]_n\) because \(\Hilm[F]_n \cong A_n \otimes_{\varphi_n}
  A_{n+1} \otimes_{\varphi_{n+1}} \dotsb \otimes_{\varphi_{n+m-1}}
  \Hilm[F]_{n+m}\).  The union of these kernels is dense in the kernel of
  \(\varphi_n^\infty\), which therefore also acts trivially
  on~\(\Hilm[F]_n\).  Thus we may turn~\(\Hilm[F]_n\) into a
  correspondence~\(\Hilm[F]_n'\) from \(A_n'\defeq A_n/\ker
  \varphi_n^\infty\) to~\(D\).  The maps~\(\varphi_n\) become
  embeddings \(A_n'\to A_{n+1}'\to \dotsb \to A_\infty\), and the
  isomorphisms \(\mu_n\colon A_n\otimes_{\varphi_n} \Hilm[F]_{n+1} \to
  \Hilm[F]_n\) induce isomorphisms \(\Hilm[F]_n' \cong
  A_n'\otimes_{\varphi_n} \Hilm[F]_{n+1}'\).  We use these isomorphisms
  and the embeddings \(A_n'\hookrightarrow A_{n+1}'\) to
  view~\(\Hilm[F]_n'\) as a subspace of~\(\Hilm[F]_{n+1}'\) for each~\(n\).

  Let \(\Hilm[F]_\infty\defeq \varinjlim \Hilm[F]_n'\).  Then
  \(\Hilm[F]_\infty\) is a Hilbert \(D\)\nb-module and the
  \(\Cst\)\nb-algebras~\(A_n'\) act on~\(\Hilm[F]_{\infty}\) because
  \(A_n'\cdot \Hilm[F]_\infty = \Hilm[F]'_n \subseteq \Hilm[F]_\infty\).  The
  left action of~\(A_\infty\) is nondegenerate because
  \(A_\infty\cdot\Hilm[F]_\infty\) contains~\(A_n'\cdot\Hilm[F]_\infty =
  \Hilm[F]_n'\) for each \(n\in\N\), and these subspaces are dense
  in~\(\Hilm[F]_\infty\).  Thus~\(\Hilm[F]_\infty\) is a correspondence
  from~\(A_\infty\) to~\(D\).

  This construction is inverse to the one above because \(\Hilm[F]_n
  \cong A_n\otimes_{\varphi_n^\infty}\Hilm[F]_\infty\).
  Hence~\(A_\infty\) has the universal property of the colimit.
\end{proof}

\begin{example}
  \label{exa:expectation_to_corr}
  Let \(B\subseteq A\) be a nondegenerate \(\Cst\)\nb-subalgebra and
  \(E\colon A\to B\) a conditional expectation.  Then
  \(\braket{a_1}{a_2} = E(a_1^* a_2)\) and the obvious right
  multiplication action of~\(B\) turn~\(A\) into a pre-Hilbert
  \(B\)\nb-module.  The action of~\(A\) on itself by left
  multiplication extends to the completion, giving a
  \(\Cst\)\nb-correspondence~\(A_E\) from~\(A\) to~\(B\).  If
  \(C\subseteq B\) and \(F\colon B\to C\) is a conditional
  expectation as well, then \(F\circ E\colon A\to C\) is a
  conditional expectation and the map \(a\otimes b\mapsto E(a)\cdot
  b\) extends to an isomorphism of \(\Cst\)\nb-correspondences \(A_E
  \otimes_B B_F \cong A_{F\circ E}\).  Thus a decreasing chain of
  nondegenerate \(\Cst\)\nb-subalgebras \(\mathcal{R}_n\subseteq A\)
  with conditional expectations \(\mathcal{R}_n \to
  \mathcal{R}_{n+1}\) defines a functor \((\N,\le)\to\Corr\).  This
  situation is studied in
  \cites{Exel-Lopes:Approximately_proper,Exel-Renault:AF_tail}.  To
  apply our theory, we need proper \(\Cst\)\nb-correspondences.
  Equivalently, the conditional expectations are of finite-index
  type as in~\cite{Watatani:Index}.  In the proper case, the above
  diagram has a colimit by Theorem~\ref{the:colim_Cuntz-Pimsner}; in
  fact, this is isomorphic to the \(\Cst\)\nb-algebra constructed
  in~\cite{Exel-Lopes:Approximately_proper}.  It is an appropriate
  analogue of the inductive limit of a chain of \Star{}homomorphisms
  by Proposition~\ref{pro:inductive_limit_colimit}.
\end{example}

\begin{bibdiv}
  \begin{biblist}
\bib{Albandik-Meyer:Product}{article}{
  author={Albandik, Suliman},
  author={Meyer, Ralf},
  title={Product systems over Ore monoids},
  journal={Doc. Math.},
  volume={20},
  date={2015},
  pages={1331--1402},
  issn={1431-0635},
  review={\MRref {3452185}{}},
  eprint={http://www.math.uni-bielefeld.de/documenta/vol-20/38.html},
}

\bib{Ara-Goodearl:C-algebras_separated_graphs}{article}{
  author={Ara, Pere},
  author={Goodearl, Ken R.},
  title={$C^*$\nobreakdash -algebras of separated graphs},
  journal={J. Funct. Anal.},
  volume={261},
  date={2011},
  number={9},
  pages={2540--2568},
  issn={0022-1236},
  doi={10.1016/j.jfa.2011.07.004},
  review={\MRref {2826405}{2012f:46093}},
}

\bib{Benabou:Bicategories}{article}{
  author={B\'enabou, Jean},
  title={Introduction to bicategories},
  conference={ title={Reports of the Midwest Category Seminar}, },
  book={ publisher={Springer}, place={Berlin}, },
  date={1967},
  pages={1--77},
  review={\MRref {0220789}{36\,\#3841}},
  doi={10.1007/BFb0074299},
}

\bib{Brown:Ext_free}{article}{
  author={Brown, Lawrence G.},
  title={Ext of certain free product $C^*$\nobreakdash -algebras},
  journal={J. Operator Theory},
  volume={6},
  date={1981},
  number={1},
  pages={135--141},
  issn={0379-4024},
  review={\MRref {637007}{82k:46100}},
  eprint={http://www.theta.ro/jot/archive/1981-006-001/1981-006-001-012.html},
}

\bib{Buss-Meyer:Crossed_products}{article}{
  author={Buss, Alcides},
  author={Meyer, Ralf},
  title={Crossed products for actions of crossed modules on \(\textup C^*\)\nobreakdash -algebras},
  status={accepted},
  journal={J. Noncommut. Geom.},
  issn={1661-6952},
  note={\arxiv {1304.6540}},
  date={2016},
}

\bib{Buss-Meyer-Zhu:Non-Hausdorff_symmetries}{article}{
  author={Buss, Alcides},
  author={Meyer, Ralf},
  author={Zhu, {Ch}enchang},
  title={Non-Hausdorff symmetries of \(\textup C^*\)\nobreakdash -algebras},
  journal={Math. Ann.},
  issn={0025-5831},
  volume={352},
  number={1},
  pages={73--97},
  date={2012},
  review={\MRref {2885576}{}},
  doi={10.1007/s00208-010-0630-3},
}

\bib{Buss-Meyer-Zhu:Higher_twisted}{article}{
  author={Buss, Alcides},
  author={Meyer, Ralf},
  author={Zhu, {Ch}enchang},
  title={A higher category approach to twisted actions on \(\textup C^*\)\nobreakdash -algebras},
  journal={Proc. Edinb. Math. Soc. (2)},
  date={2013},
  volume={56},
  number={2},
  pages={387--426},
  issn={0013-0915},
  doi={10.1017/S0013091512000259},
  review={\MRref {3056650}{}},
}

\bib{Exel-Lopes:Approximately_proper}{article}{
  author={Exel, Ruy},
  author={Lopes, Artur O.},
  title={$C^*$\nobreakdash -algebras, approximately proper equivalence relations and thermodynamic formalism},
  journal={Ergodic Theory Dynam. Systems},
  volume={24},
  date={2004},
  number={4},
  pages={1051--1082},
  issn={0143-3857},
  review={\MRref {2085390}{2006e:46059}},
  doi={10.1017/S0143385704000148},
}

\bib{Exel-Renault:AF_tail}{article}{
  author={Exel, Ruy},
  author={Renault, Jean},
  title={$AF$\nobreakdash -algebras and the tail-equivalence relation on Bratteli diagrams},
  journal={Proc. Amer. Math. Soc.},
  volume={134},
  date={2006},
  number={1},
  pages={193--206},
  issn={0002-9939},
  review={\MRref {2170559}{2006e:46060}},
  doi={10.1090/S0002-9939-05-08129-3},
}

\bib{Fowler:Product_systems}{article}{
  author={Fowler, Neal J.},
  title={Discrete product systems of Hilbert bimodules},
  journal={Pacific J. Math.},
  volume={204},
  date={2002},
  number={2},
  pages={335--375},
  issn={0030-8730},
  review={\MRref {1907896}{2003g:46070}},
  doi={10.2140/pjm.2002.204.335},
}

\bib{Katsura:Cstar_correspondences}{article}{
  author={Katsura, Takeshi},
  title={On $C^*$\nobreakdash -algebras associated with $C^*$\nobreakdash -correspondences},
  journal={J. Funct. Anal.},
  volume={217},
  date={2004},
  number={2},
  pages={366--401},
  issn={0022-1236},
  review={\MRref {2102572}{2005e:46099}},
  doi={10.1016/j.jfa.2004.03.010},
}

\bib{Landsman:Bicategories}{article}{
  author={Landsman, Nicolaas P.},
  title={Bicategories of operator algebras and Poisson manifolds},
  conference={ title={Mathematical physics in mathematics and physics}, address={Siena}, date={2000}, },
  book={ series={Fields Inst. Commun.}, volume={30}, publisher={Amer. Math. Soc.}, place={Providence, RI}, },
  date={2001},
  pages={271--286},
  review={\MRref {1867561}{2002h:46099}},
}

\bib{Leavitt:Module_type}{article}{
  author={Leavitt, William G.},
  title={The module type of a ring},
  journal={Trans. Amer. Math. Soc.},
  volume={103},
  date={1962},
  pages={113--130},
  issn={0002-9947},
  review={\MRref {0132764}{24\,\#A2600}},
  doi={10.2307/1993743},
}

\bib{Leinster:Basic_Bicategories}{article}{
  author={Leinster, Tom},
  title={Basic Bicategories},
  date={1998},
  status={eprint},
  note={\arxiv {math/9810017}},
}

\bib{McClanahan:unitary_matrix}{article}{
  author={McClanahan, Kevin},
  title={$C^*$\nobreakdash -algebras generated by elements of a unitary matrix},
  journal={J. Funct. Anal.},
  volume={107},
  date={1992},
  number={2},
  pages={439--457},
  issn={0022-1236},
  review={\MRref {1172034}{93j:46062}},
  doi={10.1016/0022-1236(92)90117-2},
}

\bib{McClanahan:KK_twisted}{article}{
  author={McClanahan, Kevin},
  title={$KK$\nobreakdash -groups of twisted crossed products by groups acting on trees},
  journal={Pacific J. Math.},
  volume={174},
  date={1996},
  number={2},
  pages={471--495},
  issn={0030-8730},
  review={\MRref {1405598}{98b:46092}},
  eprint={http://projecteuclid.org/euclid.pjm/1102365181},
}

\bib{Muhly-Solel:Tensor}{article}{
  author={Muhly, Paul S.},
  author={Solel, Baruch},
  title={Tensor algebras over $C^*$\nobreakdash -correspondences: representations, dilations, and $C^*$\nobreakdash
-envelopes},
  journal={J. Funct. Anal.},
  volume={158},
  date={1998},
  number={2},
  pages={389--457},
  issn={0022-1236},
  review={\MRref {1648483}{99j:46066}},
  doi={10.1006/jfan.1998.3294},
}

\bib{Pimsner:Generalizing_Cuntz-Krieger}{article}{
  author={Pimsner, Mihai V.},
  title={A class of $C^*$\nobreakdash -algebras generalizing both Cuntz--Krieger algebras and crossed products
by~$\mathbf Z$},
  conference={ title={Free probability theory}, address={Waterloo, ON}, date={1995}, },
  book={ series={Fields Inst. Commun.}, volume={12}, publisher={Amer. Math. Soc.}, place={Providence, RI}, },
  date={1997},
  pages={189--212},
  review={\MRref {1426840}{97k:46069}},
}

\bib{Watatani:Index}{article}{
  author={Watatani, Yasuo},
  title={Index for $C^*$\nobreakdash -subalgebras},
  journal={Mem. Amer. Math. Soc.},
  volume={83},
  date={1990},
  number={424},
  pages={vi+117},
  issn={0065-9266},
  review={\MRref {996807}{90i:46104}},
  doi={10.1090/memo/0424},
}

  \end{biblist}
\end{bibdiv}
\end{document}